\theoremstyle{definition}
\newtheorem{thm}{Theorem}
\newtheorem{cor}[thm]{Corollary}
\newtheorem{lem}[thm]{Lemma}
\newtheorem{prop}[thm]{Proposition}
\theoremstyle{remark}
\newtheorem{defn}{Definition}
\newtheorem{example}{Example}
\newtheorem{assump}{Assumption}
\newtheorem{rem}{Remark}
\newcommand{\comment}[1]{}
\newcommand{\pw}{{\rm POWER}}
\newcommand{\fdr}{{\rm FDR}}
\newcommand{\sj}{\setminus j}
\DeclareMathOperator {\diag}{diag}
\DeclareMathOperator {\argmin}{argmin}
\DeclareMathOperator {\col}{col}
\title{Stability of a Generalized Debiased Lasso with Applications to Resampling-Based Variable Selection
}
\author{Jingbo Liu\thanks{Department of Statistics, University of Illinois Urbana-Champaign.
Also affiliated with Department of Electrical and Computer Engineering, the Grainger College of Engineering. 
jingbol@illinois.edu\\
\indent
2020 Mathematics Subject Classification: Primary 62J07, 62E17; Secondary 62F12, 62F05.} 
}
\date{\today}
\begin{document}

\maketitle

\begin{abstract}
We propose a generalized debiased Lasso estimator based on a stability principle. When a single column of the design matrix is perturbed, the estimator admits a simple update formula that can be computed from the original solution. Under sub-Gaussian designs with well-conditioned covariance, this approximation is asymptotically accurate for all but a vanishing fraction of coordinates in the proportional growth regime.
The proof relies on concentration and anti-concentration arguments to control error terms and sign changes. In contrast, establishing comparable distributional limits (e.g., Gaussianity) under similar assumptions remains open.
As an application, we show that the approximation significantly reduces the computational cost of resampling-based variable selection procedures, including the conditional randomization test and a local knockoff filter.
\end{abstract}

\tableofcontents  
\section{Introduction}\label{sec_intro}
Suppose $A, B \in \mathbb{R}^{n \times p}$ are two matrices differing only in the $j$-th column. Let $Y \in \mathbb{R}^n$ and let $\rho \colon \mathbb{R} \to [0, +\infty)$ be a convex function, 
and define
\begin{align}
\hat{\alpha} &:= \operatorname*{argmin}_{\beta \in \mathbb{R}^p} \left\{ \frac{1}{2n} \|Y - A\beta\|_2^2 + \sum_{k=1}^p \rho(\beta_k) \right\}; \label{eq_alpha} \\
\hat{\beta}  &:= \operatorname*{argmin}_{\beta \in \mathbb{R}^p} \left\{ \frac{1}{2n} \|Y - B\beta\|_2^2 + \sum_{k=1}^p \rho(\beta_k) \right\}. \label{eq_beta}
\end{align}
Motivated in part by accelerating resampling-based variable selection methods \citep{candes2018panning},
we consider how to efficiently solve the second optimization given the solution to the first. 
In general, there is no explicit formula for updating $\hat{\alpha}$ to $\hat{\beta}$, and approximations based on Taylor expansions are not accurate in high-dimensional settings with correlated features. 
It is therefore natural to consider alternative representations of the solutions that preserve stability under local updates.
The main message of this paper is that \emph{debiased estimators satisfy an orthogonality principle that enables stable updates under fairly general conditions}. 
In particular, we show that correlated sub-Gaussian designs are sufficient for the approximation to hold.
To be precise, we establish approximation results for a generalized definition of the debiased estimator, which reduces to the standard definition in the literature under more restrictive assumptions.

The standard definition of debiased Lasso in the high-dimensional regime can be found in \cite{javanmard2014hypothesis}. 
We focus on the Lasso case $\rho(t)=\lambda|t|$ for $\lambda \in [0,\infty)$, where the objective is convex but not strongly convex and lacks smoothness, leading to additional technical difficulties;
the case of smooth and strongly convex $\rho$ is discussed later and is technically simpler.
Suppose that $A$ has i.i.d.\ rows following the normal distribution $\mathcal{N}(0,\Sigma)$,
and $Y=A\alpha +w$, where $w$ is an independent  Gaussian noise vector, and $\alpha\in\mathbb{R}^p$.
The ``number of nonzero coefficients'' is defined by 
\begin{align}
k:=\|\chi^{\alpha}\|_0,
\label{e_k}
\end{align} 
where
\begin{align}
\psi^{\alpha}&:=\frac1{n\lambda}A^{\top}R,
\quad
R:=Y-A\hat{\alpha},
\label{e_psi}
\\
\chi^{\alpha}_j&:=1\{\psi^{\alpha}_j=1\}
-
1\{\psi^{\alpha}_j=-1\}.
\label{e_chi}
\end{align}
Note that $\psi^{\alpha}$ is the subgradient of the $\ell_1$ norm, 
so $k\ge \|\hat{\alpha}\|_0$, although equality is achieved in most cases.
We use the definition \eqref{e_k} instead of $\|\hat{\alpha}\|_0$ since $\hat{\alpha}$ may not be unique, due to the lack of strong convexity of the optimization,
whereas $\psi^{\alpha}$, and hence $\chi^{\alpha}$, is always uniquely defined.
Similarly $\chi^{\alpha}_j$ can be understood as the `essential sign' of $\hat{\alpha}_j$.
Then, 
the debiased Lasso defined in \cite{javanmard2014hypothesis} is 
\begin{align}
\hat{\alpha}^u:=\hat{\alpha}+\frac1{n-k}\Sigma^{-1}A^{\top}(Y-A\hat{\alpha}).
\label{eq_betau}
\end{align}
Under suitable conditions, it has been shown that 
$\hat{\alpha}^u\approx \alpha+ \tau \Sigma^{-1/2}z$,
where $z\sim \mathcal{N}(0,I)$ and $\tau\in(0,\infty)$ is a constant determined by a set of fixed point equations
\citep{javanmard2014hypothesis}.
Rigorously establishing such Gaussian limit properties for general non-Gaussian $A$ in the proportional growth regime ($p$, $n$, and the sparsity level $s$ have fixed ratios) is open; see discussions Section~\ref{sec_related}.

\comment{
and $\Sigma^{-1}$ is a suitable notion of the precision matrix (which can be taken as the inverse of the covariance matrix of the row vector in the case of random design with i.i.d.\ rows).
Equivalently, we can see that $\hat{\beta}^u$ is such that
$\argmin_{\beta\in\mathbb{R}^p}\{
\frac1{2}(1-\frac{k}{n})\|\hat{\beta}^u-\hat{\beta}\|^2_{\Sigma}+\lambda\|\hat{\beta}\|_1\}
$ coincides with the solution to \eqref{eq_beta}.
}

In this work, we propose a generalization of  \eqref{eq_betau} that extends its applicability under more relaxed assumptions. This generalization is similar to the orthogonality principle in the semiparametric and double machine learning literature \citep{chernozhukov2018double},
but we will discuss the differences in Section~\ref{sec_related}.
Let $A_{:\sj}\in\mathbb{R}^{n\times p}$ denote the matrix obtained by excluding the $j$-th column of $A$. 
Define the ``residualized column''
\begin{align}
\check{A}_{:j}&:=A_{:j}-\mu_{:j}.
\label{eq68_1}
\end{align}
where $\mu_{:j}\in\mathbb{R}^n$ is an arbitrary vector.
To control the error of the approximate formula, we will need to choose $\mu_{:j}$ so that the residualized column is ``orthogonal'' to other columns in a sense that will be made precise in \eqref{eq_cond3} (specific examples are provided in the itemized below).
We then define a generalization of the debiased estimator in \eqref{eq_betau}:

\begin{align}
\hat{\alpha}_j^U
:=\hat{\alpha}_j+
\left(
\frac1{n}\check{A}_{:j}^{\top}
(I-P_{\mathcal{A}})A_{:j}
\right)^{-1}
\frac{\check{A}_{:j}^{\top}R}{n},
\label{eq_alpha_U}
\end{align}
where
\begin{align}
\mathcal{A}:=\{l\neq j\colon \chi^{\alpha}_l\neq 0\},
\end{align}
and $P_{\mathcal{A}}$
denotes the projection onto the columns of $A$ corresponding to $\mathcal{A}$ in the case of Lasso,
and $P_{\mathcal{A}}:=A_{:\sj}[A_{:\sj}^{\top}A_{:\sj}+\diag(\rho''(\hat{\alpha}_{\sj}))]^{-1}A_{:\sj}^{\top}$
in the case of general $\rho$.
The inverse in \eqref{eq_alpha_U} is well-defined in the event of interest.
Again, we adopt the convention in \eqref{e_k} when $\hat{\alpha}$ is not unique.
Note that the definition of $\hat{\alpha}^U_j$ uses only $(Y,A)$ and has no reference to $B$, 
hence we can use $\hat{\alpha}^U_j$  to build $t(j,A,Y)$.
Specific examples of $\mu_{:j}$ and the corresponding $\hat{\alpha}^U$ include:

\begin{itemize}
\item 
If $n>p$, we can take $\mu_{:j}$ to be the projection of $A_{:j}$ on the column space of $A_{:\sj}$.
Then $\check{A}_{:j}^{\top}P_{\mathcal{A}}=0$,
and we have $\hat{\alpha}_j^U=
\hat{\alpha}_j+(\check{A}_{:j}^{\top}A_{:j})^{-1}\check{A}_{:j}^{\top}(Y-A\hat{\alpha})
=\|\check{A}_{:j}^{\top}\|_2^{-2}\check{A}_{:j}^{\top}Y=[(A^{\top}A)^{-1}A^{\top}]_{j:}Y$ which is the ordinary least squares estimator.
In the specific case of orthogonal designs, 
 we simply take 
$\mu_{:j}=0$ 
(see Example~\ref{ex1}).
\item 
If $n<p$, the above choice of $\mu_{:j}$ leads to a degenerate definition of $\hat{\alpha}_j^U$.
However, due to the sparsity of $\hat{\alpha}$, other choices of $\mu_{:j}$ may still
ensure \eqref{eq_cond3}.
For the random design case where the rows of $A$ are independent,
we can choose 
$\mu_{:j}=\mathbb{E}[A_{:j}|A_{:\sj}]$. 
Specifically for Gaussian $A$, we have 
$
\check{A}_{:j}:=A_{:j}-A_{:\setminus j}\Sigma_{\setminus j}^{-1}\Sigma_{\sj j}
$ 
in terms of the covariance matrix $\Sigma$.
\end{itemize}

\paragraph*{Contributions}
We will show that
$
\hat{\alpha}^U_j\approx \hat{\alpha}^u_j
$
under suitable assumptions,
but the definition of $\hat{\alpha}^U_j$
is applicable in more general settings.
Under increasingly stronger assumptions, 
we establish the following approximation results:

\begin{itemize}
\item Explicit nonasymptotic error bounds for any given design matrix:
Define $\hat{\beta}^U_j$ analogously to $\hat{\alpha}^U_j$, i.e.
\begin{align}
\hat{\beta}_j^U
:=\hat{\beta}_j+
\left(
\frac1{n}\check{B}_{:j}^{\top}
(I-P_{\mathcal{B}})B_{:j}
\right)^{-1}
\frac{\check{B}_{:j}^{\top}S}{n},
\label{eq_beta_U}
\end{align}
where 
$\check{B}_{:j}:=B_{:j}-\mu_{:j}$,
$S:=Y-B\hat{\beta}$,
and $P_{\mathcal{B}}$ denotes the projection onto the columns of $A$ (equivalently, columns of $B$ since the two matrices differ only in the $j$-th column) corresponding to 
$
\mathcal{B}:=\{l\neq j\colon \chi^{\beta}_l\neq 0\}$.
For any given $(A,B,Y)$, 
we can show
\begin{align}
\check{B}_{:j}^{\top}
(I-P_{\mathcal{B}})
B_{:j}
\hat{\beta}^U_j
&\approx
\check{B}^{\top}_{:j}R
+
\check{B}^{\top}_{:j}(I-P_{\mathcal{A}})A_{:j}
\hat{\alpha}_j
\label{e_approx}
\end{align}
with error controlled in Theorem~\ref{thm_error}.
The linear combination on the right side of \eqref{eq_beta_U} orthogonalizes the estimator so that the approximation error in \eqref{e_approx} becomes the inner product of two incoherent vectors, 
canceling the leading error term.
In Section~\ref{sec_related}, we discuss the connection between this orthogonalization and the one in the double machine learning literature.
If $B_{:j}$ and $A_{:j}$ are independent conditioned on $A_{:\sj}$, we can further show that the right side of \eqref{e_approx} is approximately $\check{B}^{\top}_{:j}R$,
although the right side of  \eqref{e_approx} is already computable without using $\hat{\beta}$.
Thus, for any given $Y$, if we define
\begin{align}
t(j,A,Y)
&:=\frac1{n}\check{A}_{:j}^{\top}
(I-P_{\mathcal{A}})
A_{:j}\hat{\alpha}_j^U;
\label{e_tja}
\\
t(j,B,Y)
&:=\frac1{n}\check{B}_{:j}^{\top}
(I-P_{\mathcal{B}})
B_{:j}\hat{\beta}_j^U,
\label{e_tjb}
\end{align}
then we can compute both $t(j,A,Y)$ 
and approximately
$t(j,B,Y)$ 
using only $\hat{\alpha}$ and without $\hat{\beta}$.

\item Convergence for sub-Gaussian designs:
Consider the case where 
$Y=A\alpha+w$
and design matrices have i.i.d.\ rows with covariance of bounded max and min eigenvalues and with bounded sub-Gaussian variance proxy.
Also assume that $B_{:j}$ and $A_{:j}$ are i.i.d.\ given $A_{:\sj}$.
In this setting, we show that the approximation error in \eqref{e_approx} vanishes asymptotically for almost all $j$ (see Definition~\ref{defn_param} and Theorem~\ref{thm12}).
Our proof only uses certain concentration and anti-concentration properties to give order-wise control of quantities, rather than more precise calculation of limits,
which often requires stronger assumptions (see Section~\ref{sec_related}).

\item 
Reduction to $\hat{\beta}_j^u$:
Further assuming that
$\frac1{n}\check{B}_{:j}^{\top}
(I-P_{\mathcal{A}})B_{:j}$
is bounded away from 0
(which is true when 
$\mathbb{E}[|\check{B}_{1j}|^2]$ is bounded away from 0; see \eqref{e_lb}),
we have from \eqref{e_approx} that
\begin{align}
\hat{\beta}^U_j
&\approx
\frac{\check{B}^{\top}_{:j}R
+
\check{B}^{\top}_{:j}(I-P_{\mathcal{A}})A_{:j}
\hat{\alpha}_j}
{\check{B}_{:j}^{\top}
(I-P_{\mathcal{A}})
B_{:j}}.
\label{e_approx_na}
\end{align}
If we further have 
\begin{align}
\frac1{n}\check{B}_{:j}^{\top}
(I-P_{\mathcal{A}})B_{:j}
\approx 
(1-\frac{k}{n})\Sigma_{j|\sj},
\label{e_app_chi}
\end{align}
then $\hat{\beta}_j^U\approx \hat{\beta}_j^u$.
For example, \eqref{e_app_chi} is true in the case of Gaussian feature vector, 
as a consequence of concentration of the chi-square distribution; see Theorem~\ref{thm_gaussian}
and Remark~\ref{rem10}).
In more general settings, 
\eqref{e_app_chi}
may no longer hold,
since $\mathbb{E}[B_{:j}|A_{:\sj}]=A_{:\sj}\Sigma_{\sj}^{-1}\Sigma_{\sj j}$ may fail (see Remark~\ref{rem_4}).
This is our main motivation for introducing $\hat{\beta}_j^U$.
\end{itemize}

We apply the approximation formulas to accelerate resampling-based variable selection procedures. 
Consider the proportional regime where $n$ grows linearly with $p$, and suppose that solving the Lasso requires computational cost $L$ (for example, $L = O(p^3)$ via least angle regression \cite[p.~93]{hastie2009elements}). 
The knockoff filter requires solving a single regression and hence has complexity $\Theta(L)$, but its statistical power can be reduced due to the increase in the number of variables from $p$ to $2p$. 
We consider two alternative methods that avoid regressing on $2p$ variables but require solving multiple regression problems, and show that their computational cost can be substantially reduced using the proposed approximation formulas:
\begin{itemize}
\item We consider ``local knockoff filter'', a variant of the knockoff method that resamples only a single feature in each Lasso run.
A concurrent work, \cite{guan2025one}, proposed this strategy under the name ``one-at-a-time knockoffs'' and showed experimentally that it is ``substantially more powerful.'' 
While \cite{guan2025one} focused on least squares and ridge regression, exploiting the explicit solution and rank-one matrix update formulas, 
our work extends the method to Lasso regression while maintaining computational complexity $\Theta(L + p^3)$.

\item 
Conditional randomization test (CRT) iteratively updates each feature vector (column of the design matrix) with $K$ conditionally independent sample and computes the corresponding test statistics (e.g., $K=\Theta(p)$ in \cite{candes2018panning}).
When CRT is applied to the variable selection problem, its asymptotic power is often observed to be worse than that of CRT \citep{candes2018panning,li2022causal}.
However, it has a complexity of $\Theta(LpK)$.
To reduce the complexity, distilled CRT has been proposed,
and we will see in Section~\ref{sec_resample}
that $\rm d_0CRT$ from 
\cite{liu2022fast} is asymptotically equivalent to CRT using the debiased coefficients.
Implementing $\rm d_0CRT$ for FDR control requires complexity at least $\Omega(pL)$, for solving $p$ leave-one-out regression problems.
In contrast, using \eqref{e_approx} the complexity is reduced to $O(L+p^2K)$.
\end{itemize}

Since previously the debiased Lasso $\hat{\alpha}^u$ often appears in the literature on asymptotic normality,
and asymptotic normality results can be used to directly estimate the $p$-value of $\hat{\alpha}^u$,
one might ask what is the benefit of resampling $B_{:j}$ and using $\hat{\beta}^U_j$ to estimate the $p$-values.
The answer is that asymptotic normality requires more stringent conditions
than the validity of the update formula.
One simple example is the limiting case where the Lasso is reduced to a least square problem ($n>p$ and $\lambda\to 0$).
In this case, \eqref{e_approx} is in fact \emph{exact equality} regardless of the distributions and the dimensions, whereas normality is approximate and requires distributional assumptions for asymptotic convergence.
For the general $\lambda$ case, 
Example~\ref{ex1} ahead shows that asymptotic normality is false when the column of the design matrix contains a non-vanishing component,
yet our main result
(Theorem~\ref{thm_error}) is still capable of controlling the error of the update formula.
For the case of i.i.d.\ rows of the design matrix, 
as mentioned before,
our proof of the approximation in \eqref{e_approx} only uses certain concentration and anti-concentration properties to control the \emph{order} of the errors, 
rather than more precise characterization of limits such as Gaussian convergence.
Indeed, our Theorem~\ref{thm12} shows asymptotic approximation assuming that the covariance matrix $\Sigma$ of $B_{1:}$ has bounded conditional numbers, and that $B_{1:}$ is a sub-Gaussian vector.
In contrast, a Gaussian limit result for $\hat{\alpha}^u$ with correlated sub-Gaussian designs has been noted as an open problem (see Section~\ref{sec_related}).

\paragraph*{Organization}
Further related work is discussed in Section~\ref{sec_related}.
In Section~\ref{sec_main},
we present main results on general nonasymptotic error bounds and asymptotic analysis for the sub-Gaussian case.
Section~\ref{sec_aformula} proves the nonasymptotic error bound.
In Section~\ref{sec_fdr},
applies the approximation formulas in accelerating resampling-based variable selection algorithms.
Section~\ref{sec_conc} concludes with an outlook for future directions.
In the appendices, we present omitted proofs,
experimental validations of the approximation errors in the debiased and the standard Lasso,
and FDR control for synthetic and semi-real (riboflavin and HIV) data.

\section{Related work}
\label{sec_related}
\subsection{Definitions of the debiased estimator}
Debiasing the Lasso for inference was suggested by \cite{zhang2014confidence,buhlmann2013statistical,van2014asymptotically,javanmard2014hypothesis}.
The replica analysis heuristic calculation in \cite{javanmard2014hypothesis} was perhaps the first to show that $\alpha^u$ in \eqref{eq_betau} satisfies asymptotic normality in the proportional growth regime, with i.i.d.\ $\mathcal{N}(0,\Sigma)$ rows in the design matrix.
More specifically, in a suitable sense there is the approximation
\begin{align}
\hat{\alpha}^u-\alpha
\approx \tau\Sigma^{-1/2}z
\label{e_gapprox}
\end{align}
for some $z\in\mathcal{N}(0,I)$,
where $\alpha$ is the ground truth, $Y=A\alpha+w$, $w\sim \mathcal{N}(0,nI)$, and $\tau$ is the solution to a fixed point equation.
The replica calculation, as well as other existing proof techniques for \eqref{e_gapprox} in the proportional regime (see Section~\ref{sec23}),
are limited to the random design settings, and require
more restricted assumptions than our proof of \eqref{e_approx}.

In the landmark paper \cite{chernozhukov2018double},
a generalized debiased estimator similar in spirit to \eqref{e_approx} was proposed, 
which was dubbed 
``double machine learning'' since it involves regressing the outcome on the controls  
and regressing the treatment on the controls ($\mu_{:j}$ in \eqref{eq68_1}).
To explain the idea, suppose that $p$ is fixed, $n\to\infty$, and the log likelihood is given by $\ell(y,b,\beta)=-\frac1{2}(y-b^{\top}\beta)^2$ where $b\in\mathbb{R}^p$.
The approach of \cite{chernozhukov2018double}
(see also \cite{chernozhukov2017double})
then defines a score $\psi(y,b,\beta,\mu):=\partial_{\beta_j}\ell(y,b,\beta)-\mu\partial_{\beta_{\sj}}\ell(y,b,\beta)$ satisfying a \emph{Neyman orthogonality condition},
ensuring first-order insensitivity with respect to the parameters $(\mu,\beta_{\sj})$.
Data points indexed by the complement $I^c$ of a set $I\subseteq \{1,\dots,n\}$ are used to learn $\hat{\beta}_{\sj}(I^c)$ and $\mu(I^c)=
\sum_{i\in I^c}B_{ij}^{\top}B_{i\sj}(\sum_{i\in I^c}B_{i\sj}^{\top}B_{i\sj})^{-1}$,
the latter following from the orthogonality condition.
Then $\psi(y,b,\hat{\beta}_{\sj}(I^c),\beta_j,\mu(I^c))
=(b_j-\mu(I^c)b_{\sj})(y-b_{\sj}^{\top}\hat{\beta}_{\sj}(I^c)-b_j\beta_j)$,
and the debiased estimator is defined by solving 
$\sum_{i\in I}\psi(Y_i,B_{i:},\hat{\beta}_{\sj}(I^c),\beta_j,\mu(I^c))=0$ for $\beta_j$,
which is 
\begin{align}
\frac{\sum_{i\in I}(B_{ij}-\mu(I^c)B_{i\sj}^{\top})(Y_i-B_{i\sj}\hat{\beta}_{\sj}(I^c))}{\sum_{i\in I}(B_{ij}-\mu(I^c)B_{i\sj}^{\top})B_{ij}}.
\label{e17}
\end{align}
If the sample-splitting estimator $\hat{\beta}_{\sj}(I^c)$ in \eqref{e17} is replaced by the leave-one-out estimator, \eqref{e17} becomes similar to the right side of
\eqref{e_approx} (note that $\hat{\alpha}_j=0$ in  \eqref{e_approx} when $A_{:j}=0$). 
However, \eqref{e_approx} also has a factor $I-P_{\mathcal{B}}$ arising from the $\ell_1$ penalization, 
which is unique for the high-dimensional (proportional) regime and was not discussed in \cite{chernozhukov2018double}.
Our result \eqref{e_approx} establishes the approximate equivalence of orthogonalized estimators 
derived from two seemingly different principles:
\begin{itemize}
\item \emph{Robustness} is the principle behind constructing debiased estimators in double machine learning. 
In \eqref{e17}, if the nuisance parameter estimator $\hat{\beta}_{\sj}(I^c)$ has an error, the error in $Y_i-B_{i\sj}\hat{\beta}_{\sj}(I^c)$ will be in $\col(B_{i\sj})_{i\in I}$,
which is incoherent with 
$(B_{ij}-\mu(I^c)B_{i\sj}^{\top})_{i\in I}$, ensuring that \eqref{e17} is robust.
\item \emph{Stability} in the literature usually refers to the regularity of a function under the perturbation in one entry. 
In \eqref{e_approx}, the observations in one feature are changed, 
yet the estimator can be updated in a well-controlled manner,
manifesting a stability property.
\end{itemize}

\subsection{Leave-one-out analysis}
Leave-one-out analysis is a fruitful approach for establishing limiting distributions or algorithmic properties of regression
\citep{elkaroui2013robust,ma2018implicit,chen2020noisy}, 
and is closely related to techniques of the present paper.
In \cite{elkaroui2013robust}, 
it is shown using the leave-one-out technique that the M-estimator converges asymptotically to a normal distribution  (see also \cite{el2018impact} and \cite{lei2018asymptotics}).
The problem considered there is different from the distribution of the Lasso considered in the present paper: the M-estimation problem concerns the $n>p$ regime, and there is no need for debiasing; the asymptotic normality follows immediately from the rotation invariance of the distribution.
We remark that a duality between M-estimation estimation and penalized least squares was mentioned in \cite{donoho2016high}.
However, the duality only applies when the design matrix of the lasso has orthonormal rows, which does not cover the setting of the present paper.

A leave-one-out analysis for the Lasso was carried out in \cite{javanmard2018debiasing}.
In addition to bounded singular values of $\Sigma$, 
their analysis requires bounded $\ell_1$ norms of the rows of the inverses of the submatrices of $\Sigma$ 
(see \cite[Theorem~3.8]{javanmard2018debiasing}).
The latter condition can be more restrictive than ours in Definition~\ref{defn_param}: for example a $k\times k$ random matrix with independent entries of scale $1/\sqrt{k}$ has spectral norm 
of order $O(1)$, yet the $\ell_1$ norm of each of its row has order $\Omega(\sqrt{k})$ which is unbounded.
Furthermore,  \citet[Theorem~3.8]{javanmard2018debiasing} requires a sublinear sparsity level $s_0=o(n/(\ln p)^2)$.
In that regime, there is no need for the degrees of freedom adjustment factor $\frac1{n-k}$ in \eqref{eq_betau},
and in fact in the approximation formula \eqref{e_approx} it suffices to replace $R$ with the noise $w$ (see \citet[eq.~(61)]{javanmard2018debiasing}).
An extension of the analysis was done in
\cite{bellec2022biasing}, 
where the role of degrees-of-freedom adjustment was highlighted for sparsity level $s_0\gg n^{2/3}$, but still $s_0/p\to 0$ is required.
In contrast, the present paper considers the regime of proportional sparsity level.

\subsection{Asymptotic normality in the high-dimensional regimes}
\label{sec23}
It appears that the first asymptotic normality result for debiased Lasso estimates in the proportional regime for correlated designs was derived in \cite{bellec2019biasing} (see the discussions therein).
The technique of \cite{bellec2019biasing} (see also \cite{bellec2021second}) was based on the \emph{Second Order Stein} theorems bounding the non-Gaussianity of a random variable of the form $z^{\top}f(z)-{\rm div}f(z)$,
where $z\sim\mathcal{N}(0,I)$.
To apply it to the debiased Lasso problem, 
consider given $A_{:\sj}$ (the submatrix of $B$ formed by excluding the $j$-th column), ground truth $\alpha$ and noise $w$.
Let $A_{:j}:=A_{:\sj}\Sigma_{\sj}^{-1}\Sigma_{\sj j}+z\sqrt{(\Sigma^{-1})_{jj}^{-1}}$ and $Y=A\alpha+w$, which are both viewed as functions of $z$.
Let $f(z):=A\hat{\alpha}-Y$.
Then it can be verified that $z^{\top}f(z)-{\rm div}f(z)$ is the debiased Lasso estimate up to a linear transform. 
The method of \cite{bellec2019biasing} made essential uses of the Gaussian random design assumption, e.g.\  Gaussian integration by parts.

Gaussian comparison is another powerful approach for deriving the asymptotic distribution of the Lasso. Building on an earlier idea of \cite{thrampoulidis2015regularized} that constructs a simpler but comparable Gaussian process, 
\cite{miolane2021distribution} proved asymptotic normality of \eqref{eq_betau} (in the Wasserstein distance in $\mathbb{R}^p$) for i.i.d.\ $\mathcal{N}(0,I)$ rows, and \cite{celentano2020lasso} extended the result to i.i.d.\ $\mathcal{N}(0,\Sigma)$ rows.
By nature, the Gaussian comparison argument strongly relies on the Gaussianity of the design matrix.

Characterizing the asymptotic distribution of the Lasso for dependent non-Gaussian designs  is an open challenge
(see comments in \cite{montanari2022universality} and \citet[Remark 4.2]{celentano2020lasso}).
Proof of 
universality based on the Lindeberg-type argument typically assumes  independent entries
\citep{han2023universality,aubin2020generalization}.

Approximate message passing (AMP) is not only an algorithm but also a method of characterizing asymptotic distributions. 
The most common approach for analyzing the state evolution of AMP is through a conditioning technique, 
which shows that vector approximate message passing
works for 
design matrices with a general spectrum but satisfying
right-rotational invariance
\citep{schniter2016vector,fan2022approximate,li2023random,zhong2021approximate}.
In particular, rotation invariance implies that the feature distribution is permutation invariant,
which does not subsume our setting.
Other representative approaches for AMP analysis \citep{bao2023leave,li2024non} assume independent matrix entries.

\subsection{Resampling-based variable selection methods}
\label{sec_resample}
Traditionally, the most well-known variable selection method with guaranteed false discovery rate (FDR) control is the Benjamini-Hochberg procedure 
\citep{benjamini1995controlling}, which typically assumes that the $p$-values are independent or positively correlated.
The knockoff filter \citep{barber2015controlling,candes2018panning} is a  recent popular approach that controls the FDR without such restrictive dependency assumptions.
Intuitively, the knockoff filter creates knockoff features which have the same distribution as the true features, but are conditionally independent of the response,
so that the knockoff statistics can be used as a benchmark/control for understanding the $p$-values. 
Remarkably, the knockoff filter extends such an intuition by offering provable finite sample FDR control via an elegant martingale analysis \citep{barber2015controlling}.

The knockoff filter performs regression on $2p$ features and is often observed to have lower statistical power than methods that resample one feature at a time, such as the conditional randomization test (CRT),
and Gaussian mirror \citep{xing2023controlling}. 
CRT is primarily a hypothesis testing procedure. While it can be used to compute $p$-values and achieve FDR control when combined with BHq, it is computationally intensive in this setting.
The holdout randomization test (HRT; \citep{tansey2022holdout}) improves the efficiency of CRT via sample splitting, at the cost of potential power loss.
Distilled CRT \citep{liu2022fast} is a well-known approach for mitigating the computational cost of CRT.
Assuming that the $j$-th feature is to be tested,
we can express their test statistic (see \citet[Section~2.3]{liu2022fast}) as 
\begin{align}
\frac{|(Y-d_y)^{\top}(B_{:j}-d_x)|}{\|B_{:j}-d_x\|_2^2}
\end{align}
where we follow the notation of \cite{liu2022fast} to denote by $d_x$ and $d_y$ the distilled information. 
According to \citet[Section~2.1]{liu2022fast} we can take $d_x=\mathbb{E}[B_{:j}|B_{:\sj}]$ and $d_y=A\hat{\alpha}$, 
where $A_{:j}=0$ and $A_{:\sj}=B_{:\sj}$ (that is, $A\hat{\alpha}$ is the leave-one-out estimator).
Then $\hat{\alpha}=0$ and $(Y-d_y)^{\top}(B_{:j}-d_x)$ is identical to the right side of \eqref{e_approx}.
Thus our results can be interpreted as rigorously proving the asymptotic equivalence between the debiased estimator and the distilled CRT statistics in the nontrivial proportional regime, which was not covered in \cite{liu2022fast} or \cite{chernozhukov2018double}.
Furthermore, we can take $A$ in \eqref{e_approx} to be the original design matrix (rather than the leave-one-out matrix).
This obviates the need of solving the leave-one-out Lasso for each $j$,
reducing the computational complexity of the original $\rm d_0CRT$ in \cite{liu2022fast} by a factor of $p$.

\section{Main results on approximate formulas}
\label{sec_main}
\paragraph*{Notations} 
We use the standard Landau notations such as $O(n)$, $\Omega(n)$, and $\omega(1)$.
The notation $\tilde{O}(f(n))=O(f(n)(\ln n)^C)$ indicates an upper bound up to a factor of a polynomial of $\ln n$.
To emphasize the dependence on a set of parameters $\mathcal{P}$ in the implicit constants, we may write $O_{\mathcal{P}}()$ or $\tilde{O}_{\mathcal{P}}()$.
The $L_2$ norm and  the operator norm are denoted by $\|\cdot\|_2$ and $\|\cdot\|_{\rm op}$, whereas $\|\cdot \|_0$ denotes the number of nonzero coefficients.
$\lambda_{\max}$ and $\lambda_{\min}$ denote the largest and smallest singular values of a matrix.
$\col()$ denotes the column space of a matrix.
The projection matrix onto a set $\mathcal{A}$ of columns in the design matrix is denoted  $P_{\mathcal{A}}$.
The max and min of two numbers $a,b$ are denoted $a\vee b$ and $a\wedge b$.
For $a,b\in\mathbb{R}^p$, write the empirical distribution $\hat{P}_{ab}:=\frac1{p}\sum_{j=1}^p\delta_{(a_j,b_j)}$.
We use $A_{:\sj}$ to denote the submatrix of $A$ formed by all except the $j$-th column.
The standard normal distribution in $\mathbb{R}^d$ is written as
$\mathcal{N}(0,I_n)$.
$\diag(\cdot)$ can denote either a diagonal matrix with diagonals specified by a vector, or the vector formed by the diagonal values of a matrix.

\subsection{Nonasymptotic error bounds}\label{sec_approx}
Suppose that $A,B\in\mathbb{R}^{n\times p}$ are matrices differing only in the $j$-th column. 
Recall that we define $\hat{\alpha}$ and $\hat{\beta}$ as the solutions to \eqref{eq_alpha}-\eqref{eq_beta} (any choice of the minimizer when the minimizer is not unique),
and $\hat{\alpha}^U_j$ and $\hat{\beta}^U_j$ in \eqref{eq_alpha_U} and \eqref{eq_beta_U}.
Let $\mu_{:j}\in\mathbb{R}^n$ be an arbitrary vector, and  
define 
\begin{align}\check{B}_{:j}:=B_{:j}-\mu_{:j}.
\end{align}
Our main nonasymptotic result is the following:

\begin{thm}\label{thm_error}
Let $Y\in\mathbb{R}^n$ be arbitrary, let $\hat{\alpha}$ and $\hat{\beta}$ be as defined in \eqref{eq_alpha}-\eqref{eq_beta},
and consider $t(\cdot)$ defined in \eqref{e_tja}.
Set $\mathcal{J}:=\{l\colon \chi^{\alpha}_l\neq \chi^{\beta}_l\}$,
and suppose that 
\begin{align}
|\mathcal{J}|&\le \varepsilon n;
\label{eq_cond1}
\\
\|B_{:j}\|_2, \|A_{:j}\|_2&\le D\sqrt{n};
\label{e19}
\\
\max_{\Delta}\|(P_{\mathcal{A}\cup\Delta}-P_{\mathcal{A}\setminus\Delta})\check{B}_{:j}\|_2&\le \Gamma \sqrt{n\varepsilon},
\label{eq_cond3}
\end{align}
where $\mathcal{A}:=\{l\neq j\colon \chi^{\alpha}_l\neq 0\}$, and the max is over $\Delta\subseteq\{1,\dots,p\}\setminus\{j\}$ of size at most $n\varepsilon$,
and $\Gamma,D>0$.
Then 
\begin{align}
\left| 
t(j,B,Y)-\frac1{n}\check{B}_{:j}^{\top}R
-
\frac1{n}\check{B}_{:j}^{\top}(I-P_{\mathcal{A}})A_{:j}
\hat{\alpha}_j
\right|
\le \Gamma D\sqrt{\varepsilon}
(|\hat{\alpha}_j|+|\hat{\beta}_j|).
\label{e22}
\end{align}
Moreover, if $t(j,B,Y)$ above is replaced by $\frac1{n}\check{B}_{:j}^{\top}(I-P_{\mathcal{A}})B_{:j}\hat{\beta}_j^U$,
we can still bound the left side by $\Gamma D\sqrt{\varepsilon}
(|\hat{\alpha}_j|+|\hat{\beta}_j|+|\hat{\beta}_j^U|)$.
\end{thm}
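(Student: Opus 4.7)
The plan is to reduce the left-hand side of \eqref{e22} to a single projection contrast, then bound that contrast using Lasso KKT together with \eqref{eq_cond3} and \eqref{e19}. \textit{Step 1 (algebraic reduction).} Substituting the definition \eqref{eq_beta_U} of $\hat\beta^U_j$ into $t(j,B,Y)$ and using that $\hat\beta$ is supported in $\mathcal B\cup\{j\}$ (so $(I-P_{\mathcal B})B\hat\beta=(I-P_{\mathcal B})B_{:j}\hat\beta_j$), I rewrite $t(j,B,Y)=\tfrac1n\check B^{\top}Y-\tfrac1n\check B^{\top}P_{\mathcal B}B\hat\beta$. A parallel identity shows the comparison target equals $\tfrac1n\check B^{\top}Y-\tfrac1n\check B^{\top}P_{\mathcal A}A\hat\alpha$, so the LHS of \eqref{e22} collapses to the single projection contrast $\mathrm{err}=\tfrac1n\check B^{\top}(P_{\mathcal A}A\hat\alpha-P_{\mathcal B}B\hat\beta)$.

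\textit{Step 2 (KKT on a shared sub-support).} Using $A\hat\alpha=Y-R$, $B\hat\beta=Y-S$ together with the KKT conditions $A_{:\mathcal A}^{\top}R=n\lambda\chi^\alpha_{\mathcal A}$ and $A_{:\mathcal B}^{\top}S=n\lambda\chi^\beta_{\mathcal B}$ (valid since $B_{:l}=A_{:l}$ for $l\neq j$), I observe that on the shared sub-support $\mathcal C:=\mathcal A\setminus\mathcal J=\mathcal B\setminus\mathcal J$ the sign vectors coincide, hence $A_{:\mathcal C}^{\top}(S-R)=0$, i.e., $P_{\mathcal C}R=P_{\mathcal C}S$. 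This lets me replace $P_{\mathcal A}$ and $P_{\mathcal B}$ acting on $R$ or $S$ by $P_{\mathcal C}$, modulo the differences $P_{\mathcal A}-P_{\mathcal C}$, $P_{\mathcal B}-P_{\mathcal C}$, and $P_{\mathcal A}-P_{\mathcal B}$, each of which is a projection (or dominated by one in the operator order) onto a subspace of dimension at most $|\mathcal J|\le n\varepsilon$. Applying \eqref{eq_cond3} with $\Delta\in\{\mathcal A\cap\mathcal J,\ \mathcal J\setminus\{j\},\ \mathcal B\setminus\mathcal A\}$, each such projection of $\check B$ has norm at most $\Gamma\sqrt{n\varepsilon}$.

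\textit{Step 3 (assembly).} Expanding $P_{\mathcal A}A\hat\alpha=A_{:\mathcal A}\hat\alpha_{\mathcal A}+\hat\alpha_jP_{\mathcal A}A_{:j}$ (and its $\hat\beta$ analogue) isolates the $\hat\alpha_j,\hat\beta_j$ contributions as terms of the form $\tfrac{\hat\alpha_j}{n}\check B^{\top}(\text{projection difference})A_{:j}$; by Cauchy--Schwarz each is bounded by $\tfrac1n(\Gamma\sqrt{n\varepsilon})(D\sqrt n)|\hat\alpha_j|=\Gamma D\sqrt\varepsilon|\hat\alpha_j|$, using \eqref{e19}. The remaining cross terms $\tfrac1n\check B^{\top}(A_{:\mathcal A}\hat\alpha_{\mathcal A}-A_{:\mathcal B}\hat\beta_{\mathcal B})$, whose factors can individually be large, cancel on the $\mathcal C$-component via $P_{\mathcal C}(S-R)=0$; only their $\mathcal J$-component survives, and it is again absorbed by pairing $(\Gamma\sqrt{n\varepsilon})$-bounded projections of $\check B$ against $O(D\sqrt n)$-norm columns.

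\textit{Main obstacle and the ``moreover'' clause.} The hardest step is the cross-term cancellation in Step 3: naive bounds on $\check B^{\top}(S-R)$ or $\check B^{\top}(P_{\mathcal A}-P_{\mathcal B})Y$ would involve the generally unbounded quantities $\|S-R\|_2$ or $\|Y\|_2$, so the KKT-derived identity $P_{\mathcal C}(S-R)=0$ is essential to collapse these onto the $\mathcal J$-subspace controlled by \eqref{eq_cond3}. The ``moreover'' clause follows by expanding $\tfrac1n\check B^{\top}(I-P_{\mathcal A})B_{:j}\hat\beta^U_j=\tfrac1n\check B^{\top}(I-P_{\mathcal B})B_{:j}\hat\beta^U_j+\tfrac{\hat\beta^U_j}{n}\check B^{\top}(P_{\mathcal B}-P_{\mathcal A})B_{:j}$ and bounding the second piece by $\Gamma D\sqrt\varepsilon|\hat\beta^U_j|$ via \eqref{eq_cond3} (with $\Delta=\mathcal B\setminus\mathcal A\subseteq\mathcal J$) and \eqref{e19}.
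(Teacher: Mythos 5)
Your Step 1 reduction is correct and is a genuinely cleaner way to reveal the error term than the paper's route. Plugging \eqref{eq_beta_U} into $t(j,B,Y)$ and using that $\hat\beta$ is supported on $\mathcal B\cup\{j\}$ indeed gives $t(j,B,Y)=\tfrac1n\check B_{:j}^\top Y-\tfrac1n\check B_{:j}^\top P_{\mathcal B}B\hat\beta$, and likewise the target equals $\tfrac1n\check B_{:j}^\top Y-\tfrac1n\check B_{:j}^\top P_{\mathcal A}A\hat\alpha$, so the LHS of \eqref{e22} is exactly $\tfrac1n\check B_{:j}^\top(P_{\mathcal A}A\hat\alpha-P_{\mathcal B}B\hat\beta)$. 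This bypasses the paper's equations \eqref{eq_69}--\eqref{eq69}, which obtain an equivalent identity by eliminating $\hat\alpha_{\setminus j}-\hat\beta_{\setminus j}$ through the finite-difference Hessian $G$ of Proposition~\ref{prop_err}.

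However, Steps 2--3 do not close. After splitting $P_{\mathcal A}A\hat\alpha=A_{:\mathcal A}\hat\alpha_{\mathcal A}+\hat\alpha_jP_{\mathcal A}A_{:j}$ and invoking $P_{\mathcal C}(S-R)=0$, the part of the error that survives the $\mathcal C$-cancellation is $\tfrac1n\check B_{:j}^\top(I-P_{\mathcal C})(A_{:\mathcal A}\hat\alpha_{\mathcal A}-A_{:\mathcal B}\hat\beta_{\mathcal B})$. Using $A_{:\mathcal A}\hat\alpha_{\mathcal A}-A_{:\mathcal B}\hat\beta_{\mathcal B}=(S-R)-(A_{:j}\hat\alpha_j-B_{:j}\hat\beta_j)$, the norm of this object is controlled only once you know $\|S-R\|_2\le\|A_{:j}\hat\alpha_j-B_{:j}\hat\beta_j\|_2$. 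That residual-contraction inequality is \eqref{eq82}, and it is \emph{not} a consequence of the KKT sign-matching identity $P_{\mathcal C}(S-R)=0$: the paper derives it by solving the normal equations for $S-R$ (equations \eqref{eq85}--\eqref{eq86}), which requires the finite-difference diagonal $\Lambda$ and a Woodbury-type manipulation. Your sketch never produces this, so the assertion that the surviving $\mathcal J$-component has ``$O(D\sqrt n)$-norm'' is unjustified; \eqref{e19} only bounds the $j$-th columns $A_{:j},B_{:j}$, not $A_{:\mathcal A}\hat\alpha_{\mathcal A}-A_{:\mathcal B}\hat\beta_{\mathcal B}$ directly. There is also a quantitative issue: even granting \eqref{eq82}, the triangle inequality over $(S-R)$ and $(A_{:j}\hat\alpha_j-B_{:j}\hat\beta_j)$ plus the separate $\hat\alpha_j,\hat\beta_j$ pieces double-counts and produces a constant of at least $2\Gamma D\sqrt\varepsilon$ rather than $\Gamma D\sqrt\varepsilon$. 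The paper avoids both problems by working with the \emph{exact} identity \eqref{eq69}, in which the discrepancy is manifestly the bilinear form $c_\alpha\hat\alpha_j+c_\beta\hat\beta_j$ with $c_\alpha,c_\beta$ controlled by Lemma~\ref{lem5} (which sandwiches $\tfrac1n A_{:\setminus j}(G^{-1}-\bar G^{-1})A_{:\setminus j}^\top$ between $\pm(P_{\mathcal A\cup\mathcal J}-P_{\mathcal A\setminus\mathcal J})$); the contraction inequality is never needed. One small additional slip: in the ``moreover'' clause you take $\Delta=\mathcal B\setminus\mathcal A$, but $P_{\mathcal B}-P_{\mathcal A}$ is not $P_{\mathcal A\cup\Delta}-P_{\mathcal A\setminus\Delta}$ for that $\Delta$; you should instead take $\Delta=\mathcal J$ and appeal to Lemma~\ref{lem_proj} to compare $P_{\mathcal B}-P_{\mathcal A}$ with $P_{\mathcal A\cup\mathcal J}-P_{\mathcal A\setminus\mathcal J}$.
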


The proof the theorem is given in Section~\ref{sec_aformula}.
Theorem~\ref{thm_error} suggests the formula \eqref{e_approx_na}
for fast calculation of the debiased estimator when the design matrix is updated by one-column, 
since the right side only depends on the result of solving \eqref{eq_alpha}.
Note that Theorem~\ref{thm_error} applies to any given $B$ as long as conditions \eqref{eq_cond1}-\eqref{eq_cond3} are satisfied.
For deterministic designs, 
$\mu_{:j}$ can be taken as \emph{any} vector that ensures \eqref{eq_cond3}.

A basic example of Theorem~\ref{thm_error} is simply $A=B$, in which case $\varepsilon=0$ and $\Gamma=0$.
Then \eqref{e22} simply recovers our definition of the debiased estimator.
As a more advanced example,
consider random designs where $A$ and $B$ are conditionally independent given $A_{:\sj}$. 
Then, we can take $D$ as a constant independent of $n$,
$\Gamma $ as a slowly growing function (e.g.\ polylog of $n$),
and $\varepsilon$ vanishing in $n$. 
Moreover, let 
\begin{align}\mu_{:j}:=\mathbb{E}[B_{:j}|B_{:\sj}]=\mathbb{E}[B_{:j}|A_{:\sj}],
\end{align}
so that \eqref{eq_cond3} is satisfied with high probability when $\varepsilon$ is small, because $\mathcal{A}$ is a function of $(A,Y)$ whereas $\check{B}_{:j}$ is a zero mean vector conditioned on $(A,Y)$.
More precise results will be discussed in Section~\ref{sec_asymp}.

\begin{example}\label{ex1}
We provide a nontrivial example where the update formula is correct as predicted by Theorem~\ref{thm_error}, yet asymptotic normality is false.
Consider a sequence (indexed by $n$) of instances where 
$p=p(n):=\lfloor n/\delta\rfloor$ for some $\delta\in(1,\infty)$, 
$\frac1{\sqrt{n}}w_1,\dots,\frac1{\sqrt{n}}w_n$ are i.i.d.\ according to a certain non-Gaussian distribution, and 
$Y=A\alpha+w$ for some deterministic $\alpha\in\mathbb{R}^p$.
Furthermore, assume that $A$ is independent of $w$ and satisfies the following:
\begin{description}
    \item[C1] 
    $\frac1{n}A^{\top}A=I_p$;
\item[C2] $\liminf_{n\to\infty}\frac1{\sqrt{n}}\max_{1\le i\le n}|A_{i1}|>0$. 
\end{description}
We sample $B_{:1}$ i.i.d.\ as $A_{:1}$ conditioned on $A_{:\setminus 1}$.
Note that for this orthogonal design case,
we have the explicit formulas $\hat{\alpha}=S_{\lambda}(\frac1{n}A^{\top}Y)$ and
$\hat{\beta}=S_{\lambda}(\frac1{n}B^{\top}Y)$,
where $S_{\lambda}$ denotes the elementwise soft-thresholding operator.
Thus the signs of $\hat{\alpha}$ and $\hat{\beta}$ can differ only at $j=1$, 
so that \eqref{eq_cond1} and \eqref{e19} hold with $\varepsilon=\frac1{n}$ and 
$D=1$. 
We further take 
$
\mu=0 
$
so that $\check{A}_{:1}=A_{:1}$ and $\check{B}_{:1}=B_{:1}$.
Then by C1 we see that 
\eqref{eq_cond3} holds with $\Gamma=0$.
Thus \eqref{e22} shows that the update formula $t(1,B,Y)=\frac1{n}\check{B}_{:1}^{\top}R
+
\frac1{n}\check{B}_{:1}^{\top}(I-P_{\mathcal{A}})A_{:1}
\hat{\alpha}_j$ is exact
(no error).
On the other hand, note that 
$
\frac1{n}\check{A}_{:1}^{\top}
(I-P_{\mathcal{A}})A_{:1}
=\frac1{n}A_{:1}^{\top}A_{:1}=1$ and hence by
\eqref{eq_alpha_U}
we obtain
$\hat{\alpha}_1^U=\hat{\alpha}_1+\frac1{n}A_{:1}^{\top}R=\frac1{n}A_{:1}^{\top}Y$, which is simply the least squares estimator.
However it can be shown that $\frac1{n}A_{:1}^{\top}Y$ is not asymptotically Gaussian (either conditioned on $A$ or not), due to C2 above and the non-Gaussianity of the components of $w$. 
\end{example}

The update formula is closely related to the leave-one-out analysis and the asymptotic normality of debiased Lasso, e.g.\ 
\cite{javanmard2018debiasing}, \cite{bellec2019biasing} and \cite{bellec2022biasing}.
To see this, 
observe that by slightly changing the proof of Theorem~\ref{thm_error} to allow different observation vector $Y$ in the two Lasso problems,
we obtain:
\begin{thm}
\label{thm5}
Let $\alpha\in\mathbb{R}^p$ and $w\in\mathbb{R}^n$.
Suppose that $A$ and $B$ differ only in the $j$-th column.
Define
\begin{align}
\hat{\beta}:&=\argmin_{\gamma\in\mathbb{R}^p}
\left\{\frac1{2n}\|A_{:\sj}\alpha_{\sj}+w-B\gamma\|_2^2+\lambda\|\gamma\|_1\right\};
\\
\hat{\alpha}:&=\argmin_{\gamma\in\mathbb{R}^p}
\left\{\frac1{2n}\|A\alpha+w-A\gamma\|_2^2+\lambda\|\gamma\|_1\right\}.
\end{align}
Let $\mu_{:j}\in\mathbb{R}^n$ be arbitrary, 
define 
$\check{A}_{:j}:=A_{:j}-\mu_{:j}$,
$\mathcal{J}:=\{l\colon \chi^{\alpha}_l\neq \chi^{\beta}_l\}$,
and suppose that 
we have 
\eqref{eq_cond1}, 
\eqref{e19},
and 
\begin{align}
\max_{\Delta}\|(P_{\mathcal{B}\cup\Delta}-P_{\mathcal{B}\setminus\Delta})\check{A}_{:j}\|_2&\le \Gamma \sqrt{n\varepsilon},
\end{align}
where $\mathcal{B}:=\{l\neq j\colon \chi^{\alpha}_l\neq 0\}$, and the max is over $\Delta\subseteq\{1,\dots,p\}\setminus\{j\}$ of size at most $n\varepsilon$,
for some $\Gamma >1$ and $D>1$.
Then
\begin{align}
&\quad\left|\frac1{n}\check{A}_{:j}^{\top}(I-P_{\mathcal{B}})A_{:j}
(\hat{\alpha}_j^U
-\alpha_j)
-\frac1{n}\check{A}_{:j}^{\top}S
-
\frac1{n}\check{A}_{:j}^{\top}(I-P_{\mathcal{B}})B_{:j}
\hat{\beta}_j
\right|
\nonumber\\
&\le D\Gamma \sqrt{\varepsilon}
(|\hat{\beta}_j|+|\hat{\alpha}_j-\alpha_j|+|\hat{\alpha}_j^U-\alpha_j|).
\label{eq65}
\end{align}
\end{thm}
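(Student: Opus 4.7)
The plan is to follow the derivation of Theorem~\ref{thm_error}, modifying only those steps that reflect (i) the two different response vectors $Y_\alpha := A\alpha + w$ and $Y_\beta := A_{:\sj}\alpha_{\sj} + w$ for the two Lasso problems, and (ii) the fact that the debiased coordinate of interest is now $\hat{\alpha}^U_j$ rather than $\hat{\beta}^U_j$. Since $B_{:\sj}=A_{:\sj}$ and the two responses differ by $A_{:j}\alpha_j$, the only change in the residual identity \eqref{eqn3} is the shift $\hat\alpha_j \mapsto \hat\alpha_j - \alpha_j$, giving
\begin{align}
S - R = A_{:\sj}(\hat\alpha_{\sj} - \hat\beta_{\sj}) + A_{:j}(\hat\alpha_j - \alpha_j) - B_{:j}\hat\beta_j.
\end{align}

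Next I would subtract the $\sj$-block normal equations \eqref{eq2_1}--\eqref{eq2_2}, introduce $\Lambda$ as in \eqref{eq_d} and $G$ as in \eqref{e_g}, and solve for the increment, which yields
\begin{align}
G(\hat\alpha_{\sj} - \hat\beta_{\sj}) = -u(\hat\alpha_j - \alpha_j) + v\hat\beta_j,
\end{align}
with $u := \frac{1}{n}A_{:\sj}^\top A_{:j}$ and $v := \frac{1}{n}A_{:\sj}^\top B_{:j}$. In place of the $j$-th normal equation for $\hat\beta$ used in Theorem~\ref{thm_error}, I would here use $\frac{1}{n}A_{:j}^\top R = \lambda\psi^\alpha_j$ together with the definition of $\hat\alpha^U_j$, which yields the exact identity $\frac{1}{n}\check{A}^\top_{:j}(I-P_\mathcal{B})A_{:j}(\hat\alpha^U_j-\hat\alpha_j) = \frac{1}{n}\check{A}^\top_{:j}R$.

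Combining these ingredients and substituting $\hat\alpha_{\sj}-\hat\beta_{\sj}$ into $\frac{1}{n}\check{A}^\top_{:j}(R-S)$, the left side of \eqref{eq65} collapses, after using $\frac{1}{n}\check{A}^\top_{:j}P_\mathcal{B}A_{:j} = \frac{1}{n}\check{A}^\top_{:j}A_{:\sj}\bar{G}_\mathcal{B}^{-1}u$ and its $v$-analog (where $\bar{G}_\mathcal{B}$ is defined as in \eqref{e_ga} with $\mathcal{A}$ relabeled as $\mathcal{B}$), to
\begin{align}
\frac{1}{n}\check{A}^\top_{:j}A_{:\sj}(G^{-1}-\bar{G}_\mathcal{B}^{-1})u\cdot(\hat\alpha_j-\alpha_j) - \frac{1}{n}\check{A}^\top_{:j}A_{:\sj}(G^{-1}-\bar{G}_\mathcal{B}^{-1})v\cdot\hat\beta_j.
\end{align}
Applying Lemma~\ref{lem5} exactly as in the derivation \eqref{e52}--\eqref{eq57}, with $\Delta=\mathcal{J}$ of size $|\mathcal{J}|\le n\varepsilon$ by \eqref{eq_cond1} and using $\max_\Delta\|(P_{\mathcal{B}\cup\Delta}-P_{\mathcal{B}\setminus\Delta})\check{A}_{:j}\|_2 \le \Gamma\sqrt{n\varepsilon}$ together with $\|A_{:j}\|_2,\|B_{:j}\|_2 \le D\sqrt{n}$, bounds each sandwiched form by $\Gamma D\sqrt\varepsilon$, yielding the overall bound $\Gamma D\sqrt\varepsilon(|\hat\alpha_j-\alpha_j|+|\hat\beta_j|)$, which is dominated by the stated bound.

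The main obstacle, as in Theorem~\ref{thm_error}, is handling the non-differentiability of $\psi$ through the $\Lambda$ matrix of \eqref{eq_d}, ensuring $G$ is well-defined on the relevant index sets so that the sparsity $|\mathcal{J}|\le n\varepsilon$ can be exploited via Lemma~\ref{lem5}; the rest is direct algebra, with bookkeeping simplified by the fact that both projections appearing in the statement coincide (both equal to the support of $\hat\alpha$), so no analog of the ``moreover'' swap step in Theorem~\ref{thm_error} is required.
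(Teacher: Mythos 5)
Your derivation is correct, and it follows essentially the same algebraic route as the paper's sketched proof (which says to apply the substitution-and-swap recipe to the proof of Theorem~\ref{thm_error}), with one genuinely cleaner choice of entry point. You start directly from the defining identity $\tfrac1n\check{A}_{:j}^{\top}(I-P_{\mathcal{B}})A_{:j}(\hat{\alpha}_j^U-\hat{\alpha}_j)=\tfrac1n\check{A}_{:j}^{\top}R$, which already uses $P_{\mathcal{B}}$ with $\mathcal{B}=\{l\neq j:\chi^\alpha_l\neq 0\}$, the \emph{same} projection appearing in both terms of the claimed estimate. As a consequence the left side collapses exactly to the two sandwich forms $\tfrac1n\check{A}_{:j}^{\top}A_{:\sj}(G^{-1}-\bar{G}_{\mathcal{B}}^{-1})u\,(\hat\alpha_j-\alpha_j)$ and $-\tfrac1n\check{A}_{:j}^{\top}A_{:\sj}(G^{-1}-\bar{G}_{\mathcal{B}}^{-1})v\,\hat\beta_j$, with a \emph{single} $\bar G_{\mathcal{B}}$; both are controlled by Lemma~\ref{lem5} together with \eqref{eq_cond1}, \eqref{e19}, and the assumed bound on $(P_{\mathcal{B}\cup\Delta}-P_{\mathcal{B}\setminus\Delta})\check{A}_{:j}$. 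In contrast, replaying the proof of Theorem~\ref{thm_error} through its $j$-th normal equation (as the paper's sketch indicates) naturally produces two different supports in the two terms and requires a ``moreover''-type projection swap to land on the stated form, which is presumably where the extra $|\hat\alpha^U_j-\alpha_j|$ in the paper's bound comes from. Your route shows that, with $\mathcal{B}$ defined exactly as in the statement (support of $\hat\alpha$), that swap is not needed and the bound can in fact be sharpened to $\Gamma D\sqrt{\varepsilon}\,(|\hat\alpha_j-\alpha_j|+|\hat\beta_j|)$; since this dominates nothing extra, it of course also proves the stated estimate.
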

The proof can be found in Section~\ref{pfthm5}.
Observe that if $A$ and $B$ are independent conditioned on $A_{:\sj}$, \eqref{eq65} suggests that 
\begin{align}
\hat{\alpha}_j^U
-\alpha_j
\approx
\frac{\check{A}_{:j}^{\top}S}
{\check{A}_{:j}^{\top}(I-P_{\mathcal{B}})A_{:j}}
\approx
\frac{\check{A}_{:j}^{\top}S}
{(n-k)\Sigma_{j|\sj}}
\label{e_approxa}
\end{align}
which approximately follows $\mathcal{N}(0,(n-k)^{-2}\Sigma_{j|\sj}^{-1}\|S\|_2^2)$ under appropriate regularity conditions.
This recovers the asymptotic normality of the debiased Lasso previously proved by other methods; see
Section~\ref{sec_intro}.

In contrast to its debiased version, the Lasso estimator has update formula in a more restricted setting:
\begin{thm}\label{thm_approx2}
Suppose that for some $\tau\in(0,1)$, $\Gamma,\check{D}>0$,
we have \eqref{eq_cond1},
\eqref{eq_cond3},
and 
\begin{align}
\frac1{n}\|\mu_{:j}\|_2^2
&\le \tau^2;
\\
\|\check{B}_{:j}\|_2,
\|\check{A}_{:j}\|_2&\le \check{D}\sqrt{n}.
\end{align}
Then
\begin{align}
&\quad \left|\frac1{n}B_{:j}^{\top}(I-P_{\mathcal{A}})B_{:j}\hat{\beta}_j
-
\mathsf{S}_{\lambda}
\left(
\frac1{n}B_{:j}^{\top}R
+
\frac1{n}B_{:j}^{\top}(I-P_{\mathcal{A}})A_{:j}
\hat{\alpha}_j
\right)
\right|
\nonumber\\
&\le 
(\tau^2+2\tau\Gamma\sqrt{\varepsilon}+\varepsilon\Gamma^2)|\hat{\beta}_j|
+
(\tau^2+\tau(\Gamma\sqrt{\varepsilon}+\check{D})+\Gamma\sqrt{\varepsilon}\check{D})|\hat{\alpha}_j|,
\end{align}
where ${\sf S}_{\lambda}(x):=(x-\lambda)1_{x>\lambda}
-(x+\lambda)1_{x<-\lambda}$ is the soft-thresholding function.
\end{thm}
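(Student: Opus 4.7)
The plan is to run an analogue of the Theorem~\ref{thm_error} argument centered on the KKT subgradient identity for $\hat\beta_j$. The soft-thresholding operator will arise naturally from KKT, and its 1-Lipschitz property will reduce the claim to bounding a purely algebraic residual.

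First, I would exploit the KKT conditions for \eqref{eq_beta}, which give $\lambda\psi^\beta_j = \tfrac1n B_{:j}^\top S$ for $\psi^\beta_j$ a subgradient of $|\cdot|$ at $\hat\beta_j$, together with the elementary identity $\hat\beta_j\cdot c = \mathsf{S}_\lambda(\hat\beta_j\cdot c + \lambda\psi^\beta_j)$ for every $c\ge 0$, which I verify case-by-case on the sign of $\hat\beta_j$. Applied with $c = \tfrac1n B_{:j}^\top(I-P_\mathcal{A})B_{:j}\ge 0$, this rewrites the left-hand side of the theorem as $\mathsf{S}_\lambda$ of an explicit quantity. Substituting $\lambda\psi^\beta_j = \tfrac1n B_{:j}^\top(Y-B\hat\beta)$, then $Y = R + A\hat\alpha$ and $B\hat\beta = A_\mathcal{B}\hat\beta_\mathcal{B} + B_{:j}\hat\beta_j$ (recall $A,B$ agree off column $j$, and $\hat\beta_l=0$ for $l\notin\mathcal{B}\cup\{j\}$), direct cancellation yields
\[
\hat\beta_j\cdot c + \lambda\psi^\beta_j \;=\; \tfrac1n B_{:j}^\top R + \tfrac1n B_{:j}^\top(I-P_\mathcal{A})A_{:j}\hat\alpha_j + E,
\]
and after rewriting $A_\mathcal{B}\hat\beta_\mathcal{B} = P_\mathcal{B} B\hat\beta - P_\mathcal{B} B_{:j}\hat\beta_j$ the residual takes the form
\[
E \;=\; \tfrac1n B_{:j}^\top\bigl(P_\mathcal{A} A\hat\alpha - P_\mathcal{B} B\hat\beta\bigr) + \hat\beta_j\cdot\tfrac1n B_{:j}^\top(P_\mathcal{B}-P_\mathcal{A})B_{:j}.
\]
By the 1-Lipschitz property of $\mathsf{S}_\lambda$, the claim reduces to a bound on $|E|$.

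Next, I would bound $|E|$ by decomposing $B_{:j} = \mu_{:j} + \check B_{:j}$ and invoking \eqref{eq_cond3} along index sets $\Delta\subseteq\mathcal{A}\triangle\mathcal{B}$, whose size is at most $|\mathcal{J}|\le n\varepsilon$ by \eqref{eq_cond1}. The difference $P_\mathcal{B}-P_\mathcal{A}$ is routed through the intermediate $P_{\mathcal{A}\cap\mathcal{B}}$ as $(P_\mathcal{B}-P_{\mathcal{A}\cap\mathcal{B}}) - (P_\mathcal{A}-P_{\mathcal{A}\cap\mathcal{B}})$; each piece is a genuine orthogonal projection attached to an index set of size at most $|\mathcal{A}\triangle\mathcal{B}|$, so \eqref{eq_cond3} delivers $\|(P_\mathcal{B}-P_\mathcal{A})\check B_{:j}\|_2\lesssim\Gamma\sqrt{n\varepsilon}$ and, more sharply, $|\check B_{:j}^\top(P_\mathcal{B}-P_\mathcal{A})\check B_{:j}|\lesssim\Gamma^2 n\varepsilon$ via $\check B_{:j}^\top(P_\mathcal{B}-P_\mathcal{A})\check B_{:j} = \|(P_\mathcal{B}-P_{\mathcal{A}\cap\mathcal{B}})\check B_{:j}\|^2 - \|(P_\mathcal{A}-P_{\mathcal{A}\cap\mathcal{B}})\check B_{:j}\|^2$. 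Expanding $\tfrac1n B_{:j}^\top(P_\mathcal{B}-P_\mathcal{A})B_{:j}$ into four $(\mu,\check B)$ cross terms then yields the $(\tau+\Gamma\sqrt\varepsilon)^2$ coefficient of $|\hat\beta_j|$. The bilinear term $\tfrac1n B_{:j}^\top(P_\mathcal{A} A\hat\alpha - P_\mathcal{B} B\hat\beta)$ couples $B_{:j}$ with $A_{:j}\hat\alpha_j$ and picks up an additional factor $\|\check A_{:j}\|_2/\sqrt n\le\check D$ whenever the $\check B$-piece is paired with $A_{:j}$, producing the $(\tau+\check D)(\tau+\Gamma\sqrt\varepsilon)$ coefficient of $|\hat\alpha_j|$.

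The hard part will be the last step. It is critical that $|\check B_{:j}^\top(P_\mathcal{B}-P_\mathcal{A})\check B_{:j}|$ be bounded by $O(\Gamma^2 n\varepsilon)$ and \emph{not} by the much weaker Cauchy--Schwarz bound $O(\check D\Gamma n\sqrt\varepsilon)$, for otherwise the $|\hat\beta_j|$ coefficient would acquire a spurious $\check D$ factor; routing through $P_{\mathcal{A}\cap\mathcal{B}}$, where each intermediate difference is a true orthogonal projection, is precisely what accomplishes this. The other delicacy is to control $\tfrac1n B_{:j}^\top(P_\mathcal{A} A\hat\alpha - P_\mathcal{B} B\hat\beta)$ without incurring the potentially large individual norms of $\hat\alpha_\mathcal{A}$ or $\hat\beta_\mathcal{B}$; this is achieved by using the KKT-based orthogonality $A_{:l}^\top(A\hat\alpha - B\hat\beta) = 0$ for $l\in\mathcal{A}\cap\mathcal{B}\setminus\mathcal{J}$ (since $\psi^\alpha_l=\psi^\beta_l$ there), which confines the relevant contributions to directions indexed by $\mathcal{A}\triangle\mathcal{B}$, where \eqref{eq_cond3} again applies.
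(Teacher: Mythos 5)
Your starting point is correct and matches the paper's (implicit) final step: the KKT subgradient identity $\lambda\psi^\beta_j=\tfrac1nB_{:j}^\top S$, the elementary identity $c\hat\beta_j=\mathsf{S}_\lambda(c\hat\beta_j+\lambda\psi^\beta_j)$ for $c\ge 0$, and the $1$-Lipschitz property of $\mathsf{S}_\lambda$, which together reduce the theorem to a bound on an algebraic residual $E$. Your formula for $E$ is also algebraically correct.

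The gap is in the claim that $E$ decomposes into a small multiple of $|\hat\beta_j|$ (coming from $\hat\beta_j\cdot\tfrac1nB_{:j}^\top(P_\mathcal{B}-P_\mathcal{A})B_{:j}$) plus a small multiple of $|\hat\alpha_j|$ (coming from $\tfrac1nB_{:j}^\top(P_\mathcal{A}A\hat\alpha-P_\mathcal{B}B\hat\beta)$). Expanding the second piece gives
\begin{align*}
\tfrac1nB_{:j}^\top\bigl(P_\mathcal{A}A\hat\alpha-P_\mathcal{B}B\hat\beta\bigr)
=\tfrac1nB_{:j}^\top A_{:\sj}(\hat\alpha_\sj-\hat\beta_\sj)
+\tfrac1nB_{:j}^\top P_\mathcal{A}A_{:j}\hat\alpha_j
-\tfrac1nB_{:j}^\top P_\mathcal{B}B_{:j}\hat\beta_j,
\end{align*}
so it contains a $-\tfrac1nB_{:j}^\top P_\mathcal{B}B_{:j}\hat\beta_j$ term of size $O(D^2|\hat\beta_j|)$, which is not small and is a multiple of $|\hat\beta_j|$, not $|\hat\alpha_j|$. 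That term must cancel against the corresponding part of your first piece, so bounding the two pieces separately is doomed. After cancellation you are left with the term $\tfrac1nB_{:j}^\top A_{:\sj}(\hat\alpha_\sj-\hat\beta_\sj)$, and your proposed tool — the KKT orthogonality $A_{:l}^\top(A\hat\alpha-B\hat\beta)=0$ for $l\in\mathcal{A}\cap\mathcal{B}\setminus\mathcal{J}$ — does not apply here, because the pairing is against $B_{:j}$ rather than any $A_{:l}$.

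The paper closes exactly this hole via Proposition~\ref{prop_err}: it expresses $\hat\alpha_\sj-\hat\beta_\sj=G^{-1}(v\hat\beta_j-u\hat\alpha_j)$ exactly, where $G=\tfrac1nA_{:\sj}^\top A_{:\sj}+\lambda\Lambda$ with $\Lambda$ built from the difference quotients $(\psi^\alpha_l-\psi^\beta_l)/(\hat\alpha_l-\hat\beta_l)$. This gives a genuine linear decomposition $E=(v^\top G^{-1}v-\tfrac1nB_{:j}^\top P_\mathcal{A}B_{:j})\hat\beta_j-(v^\top G^{-1}u-\tfrac1nB_{:j}^\top P_\mathcal{A}A_{:j})\hat\alpha_j$, and Lemma~\ref{lem5} sandwiches $\tfrac1nA_{:\sj}G^{-1}A_{:\sj}^\top$ between $P_{\mathcal{A}\setminus\mathcal{J}}$ and $P_{\mathcal{A}\cup\mathcal{J}}$ so that each coefficient is bounded by the stated $\tau,\Gamma,\check{D},\varepsilon$ quantities. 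Your routing through $P_{\mathcal{A}\cap\mathcal{B}}$ is a sound idea for controlling projection differences acting on $\check B_{:j}$, and in fact mirrors Lemma~\ref{lem5}, but without the $G$-based identity you have no way to pull $\hat\alpha_j,\hat\beta_j$ out of $A_{:\sj}(\hat\alpha_\sj-\hat\beta_\sj)$. You would need to introduce $\Lambda$ and $G$ (or an equivalent of Proposition~\ref{prop_err}) to close the argument.
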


The proof the theorem is given in Section~\ref{sec_aformula}.
Theorem~\ref{thm_approx2} suggests the approximate formula 
\begin{align}
\hat{\beta}_j
\approx
\left(\frac1{n}B_{:j}^{\top}(I-P_{\mathcal{A}})B_{:j}
\right)^{-1}
\mathsf{S}_{\lambda}
\left(
\frac1{n}B_{:j}^{\top}R
+
\frac1{n}B_{:j}^{\top}(I-P_{\mathcal{A}})A_{:j}
\hat{\alpha}_j
\right).
\end{align}
The approximation is good if $\tau, \varepsilon\to0$. Note that $\tau\to 0$ implies that the features are approximately independent.
In the proof  of Theorem~\ref{thm_error},
error terms of the form $\frac1{n}\check{B}_{:j}^{\top}(P_{\mathcal{A}\cup\Delta}-P_{\mathcal{A}\setminus\Delta})A_{:j}$ asymptotically vanishes since $\check{B}_{:j}$ and $(P_{\mathcal{A}\cup\Delta}-P_{\mathcal{A}\setminus\Delta})A_{:j}$ are uncorrelated.
On the other hand, 
in the proof of Theorem~\ref{thm_approx2},
error terms of the form $\frac1{n}B_{:j}^{\top}(P_{\mathcal{A}\cup\Delta}-P_{\mathcal{A}\setminus\Delta})A_{:j}$ arise, and it is not necessarily vanishing unless the features are independent.
In the experiments, we also observe much better approximation of the debiased estimator under correlated features.

\subsection{Asymptotic error bounds}
\label{sec_asymp}

Using Theorem~\ref{thm_error},
we can show that asymptotically and for most $j$, 
the approximation error is negligible,
under the following condition:

\begin{defn}\label{defn_param}
We say condition $\mathcal{P}=(\delta,\kappa_1,M_2,\sigma^2,C_{\rm sg})$
is satisfied 
(for some $n$) if:
\begin{itemize}
\item $p(n)=\lfloor n/\delta \rfloor$, where $\delta\in(0,1)$;

\item $A(n)\in\mathbb{R}^{n\times p(n)}$,
where the rows are i.i.d.\ following a distribution $Q(n)$ with zero mean and covariance $\Sigma(n)$.
We have  $\max\{\lambda_{\max}(\Sigma(n)),\lambda_{\min}^{-1}(\Sigma(n))\}\le \kappa_1\in(1,\infty)$, 
and $Q(n)$ is $C_{\rm sg}$-sub-Gaussian, i.e., for $v\sim Q(n)$ and any $t\in\mathbb{R}^p$, we have
\begin{align}
\mathbb{E}[\exp(\left<t,v\right>)]
\le \exp(C_{\rm sg}\|t\|_2^2);
\label{e_sg}
\end{align}

\item The noise $w(n)\sim \mathcal{N}(0,n\sigma^2I_n)$;

\item $Y(n)=A(n)\alpha(n)+w(n)$ and $\frac1{p}\|\alpha(n)\|_2^2 \le M_2$.
\end{itemize}
We will drop the $(n)$ in these notations when there is no confusion.
Recall that $O_{\mathcal{P}}(n)$ indicates that the hidden constant depends only on $\mathcal{P}$ (otherwise, it may depend on other constants such as $\lambda$ or $\kappa_2$ introduced later).
\end{defn}

\begin{rem}
The key properties of $A$ needed are (for some $\kappa>0$
and with high probability),
1) $\|A\|_{\rm op}\le \kappa\sqrt{n}$;
2) $\check{A}_{:j}$ is ${\rm  polylog}(n)$-sub-Gaussian conditioned on $A_{:\sj}$;
3) $\lambda_{\min}(A_{:\mathcal{S}})$ for all $|\mathcal{S}|\le \kappa^{-1}n$.
The proof only uses concentration inequalities to control the order of the error terms,
and small ball probability to lower bound the singular value of a random matrix following \cite{koltchinskii2015bounding}.
We expect that the asymptotic convergence results in this section can be extended to a possibly misspecified setting of $Y=\mu+w$, 
where $\|\mu\|_2^2=O(n^2)$ conditioned on a high $1-o(\frac1{p})$ probability set of $A$, 
and $w\sim \mathcal{N}(0,n\sigma^2I_n)$ is independent of $(A,\mu)$;
this is because the sparsity assumption in Lemma~\ref{lem7}  is ensured for $\|Y\|_2^2=O(n^2)$ with sufficiently large $\lambda$  as in the proof of Corollary~\ref{cor7}.
Moreover, it is expected that the Gaussian noise assumption can be relaxed to more a general small ball probability condition to ensure that not too many subgradients are near the boundary (i.e., close but not equal to $\pm1$).
\end{rem}

For each $j$, we generate $B^{(j)}_{:j}$ by setting $B^{(j)}_{:\sj}=A_{:\sj}$,
and independently sampling $B^{(j)}_{:j}$
according to $P_{A_{:j}|A_{:\sj}}$.
For each $j$, recall $P_{\mathcal{A}}$ and $P_{\mathcal{B}}$ defined around \eqref{eq_alpha_U} and \eqref{eq_beta_U},
and set
\begin{align}
P_j:=P_{\mathcal{A}};
\quad 
P^{(j)}:=P_{\mathcal{B}}.
\label{e_pj}
\end{align}
We then have a general asymptotic approximation result:

\begin{thm}\label{thm12}
Suppose that $\mathcal{P}$ is satisfied,
and $\lambda$ is larger than a threshold depending only on $\mathcal{P}$ but not $n$ (see Corollary~\ref{cor7} for the precise bound on $\lambda$ needed).  
Then, except for a set of $j$ of expected cardinality $\tilde{O}(pn^{-1/18})$,
we have $|t(j,B^{(j)},Y)-t_j|=
\tilde{O}_{\mathcal{P}}(n^{-1/18})$, where
$t(j,B^{(j)},Y)$ is as in 
\eqref{e_tjb}, and 
\begin{align}
t_j
:=
\frac1{n}\check{B}^{(j)\top}_{:j}R
+
\frac1{n}\check{B}^{(j)\top}_{:j}(I-P_j)A_{:j}
\hat{\alpha}_j.
\label{e_131}
\end{align}
In fact, the same asymptotic bound also holds if $\check{B}^{(j)\top}_{:j}(I-P_j)A_{:j}
\hat{\alpha}_j$ is dropped. 
\end{thm}
The proof is given in Section~\ref{sec_sub}.
In Theorem~\ref{thm12}
we bound the approximation error in computing $t(j,B^{(j)},Y)
=\check{B}^{(j)\top}_{:j}
(I-P^{(j)})
B^{(j)}_{:j}\hat{\beta}_j^{(j)U}$
from the update formula.
A natural question is, 
what about the approximation error for $\hat{\beta}_j^{(j)U}$ itself?
That is, by Theorem~\ref{thm12},
do we have 
\begin{align}
\hat{\beta}_j^{(j)U}
\approx
\frac{\frac1{n}\check{B}_{:j}^{\top}R
+
\frac1{n}\check{B}_{:j}^{\top}(I-P_j)A_{:j}
\hat{\alpha}_j}{\frac1{n}\check{B}_{:j}^{\top}(I-P_j)B_{:j}}?
\label{e_approxj}
\end{align}
As we will see in Theorem~\ref{thm_lbdd},
for \eqref{e_approxj} to hold it suffices
make an additional assumption of 
\begin{align}
\mathbb{E}[|\check{B}^{(j)}_{1j}|^2]
\ge \kappa_2^{-1},
\quad
\forall j=1,\dots, p,
\label{e_lb}
\end{align}
for some $\kappa_2>0$ independent of $n$.
\eqref{e_lb} is true, for example, if 
$\mu_{:j}=A_{:\sj}\Sigma_{\sj}^{-1}\Sigma_{\sj j}$ (i.e., the linear predictor is optimal), under the assumption of $\mathcal{P}$.
In the meantime, this is essentially also necessary:
if $\mathbb{E}[|\check{B}^{(j)}_{:j}|^2]=0$,
the right side of \eqref{e_approxj} is undefined.
We have the following answer to \eqref{e_approxj}:

\begin{thm}\label{thm_lbdd}
Suppose that $\mathcal{P}$ and \eqref{e_lb} are satisfied,
and $\lambda$ is larger than a threshold depending only on $\mathcal{P}$ but not $n$ (see Corollary~\ref{cor7} for the precise bound on $\lambda$ needed). 
Then, except for a set of $j$ of expected cardinality $\tilde{O}_{\mathcal{P}}(p\cdot n^{-1/18})$,
the difference between
the left and right sides of \eqref{e_approxj} is at most $\tilde{O}_{\mathcal{P}}(n^{-1/18})$.
\end{thm}
The proof is given in Section~\ref{sec_nonvanish}. 
Theorem~\ref{thm12}
and Theorem~\ref{thm_lbdd} can be used
for variable selection under the false discovery rate (FDR) control,
because they bound the approximation error for all but a small fraction of $j$,
and a vanishing fraction of coordinates does not change the asymptotic FDR and power of the selection algorithm.
To formalize the notion of ``approximation in most coordinates'', recall the notion of Levy-Prokhorov metric which metricizes weak convergence of probability measures (see for example \cite{bobkov2016proximity}):

\begin{defn}
Levy-Prokhorov metric,
denoted as $\pi$, between two probability measures $\mu$ and $\nu$ on a metric space $(X, d)$ is defined as:
\[
\pi(\mu, \nu) = \inf \{\epsilon > 0 : \mu(A) \leq \nu(A^\epsilon) + \epsilon \text{ and } \nu(A) \leq \mu(A^\epsilon) + \epsilon \text{ for all Borel sets } A \subseteq X\}
\]
where $A^\epsilon = \{x \in X : d(x, A) < \epsilon\}$ denotes the $\epsilon$-neighborhood of the set $A$.
\end{defn}

\begin{cor}\label{cor_lp}
In the setting of Theorem~\ref{thm12},
let $\hat{t}:=(t(j,B^{(j)},Y))_{j=1}^p$ and 
$\tilde{t}:=(t_j)_{j=1}^p$.
Then 
$\mathbb{E}[\pi(\hat{P}_{\hat{t}\alpha},\hat{P}_{\tilde{t}\alpha})]
=\tilde{O}_{\mathcal{P}}(n^{-1/18})$.
Similarly,
in the setting of Theorem~\ref{thm_lbdd},
$\mathbb{E}[\pi(\hat{P}_{\hat{\gamma}\alpha},\hat{P}_{\tilde{\gamma}\alpha})]
=\tilde{O}_{\mathcal{P}}(n^{-1/18})$,
where $\hat{\gamma}_j$ and $\tilde{\gamma}_j$ denote the left and the right sides of \eqref{e_approxj}.
\end{cor}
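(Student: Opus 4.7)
The plan is to reduce the bound on the Levy--Prokhorov distance directly to the coordinatewise approximation guarantees of Theorems~\ref{thm12} and \ref{thm_lbdd}. The key structural observation is that $\hat P_{\hat t\alpha}$ and $\hat P_{\tilde t\alpha}$ are empirical measures with identical second marginals $\{\alpha_j\}_{j=1}^p$, so they differ only through a perturbation of the first coordinate, and by Theorem~\ref{thm12} this perturbation is uniformly small outside a small random exceptional set of indices.

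Concretely, let $\mathcal{E}:=\{j\colon |\hat t_j-\tilde t_j|>\eta\}$ with $\eta=\tilde O_{\mathcal{P}}(n^{-1/18})$ denoting the approximation bound furnished by Theorem~\ref{thm12}, so that $\mathbb{E}|\mathcal{E}|=\tilde O(pn^{-1/18})$. For any Borel set $A\subseteq\mathbb{R}^2$, every $j\notin\mathcal{E}$ with $(\hat t_j,\alpha_j)\in A$ automatically satisfies $(\tilde t_j,\alpha_j)\in A^\eta$, because the second coordinates agree and the first differ by at most $\eta$. Hence
\[
\hat P_{\hat t\alpha}(A)\;\le\;\hat P_{\tilde t\alpha}(A^\eta)+\frac{|\mathcal{E}|}{p},
\]
and the symmetric inequality with the roles of $\hat t$ and $\tilde t$ swapped holds by the same reasoning. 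Setting $\varepsilon:=\eta+|\mathcal{E}|/p$, both inequalities fit the form required by the definition of the Levy--Prokhorov metric, yielding $\pi(\hat P_{\hat t\alpha},\hat P_{\tilde t\alpha})\le\varepsilon$.

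Taking expectations and invoking $\eta=\tilde O_{\mathcal{P}}(n^{-1/18})$ together with $\mathbb{E}|\mathcal{E}|/p=\tilde O(n^{-1/18})$ gives the first bound. The second assertion is proved identically, substituting Theorem~\ref{thm_lbdd} for Theorem~\ref{thm12} and the pair $(\hat\gamma,\tilde\gamma)$ for $(\hat t,\tilde t)$. There is no substantive obstacle here: the corollary is essentially a repackaging of the ``for almost all $j$'' statements of the previous theorems into a weak-convergence metric that is insensitive to perturbations on a vanishing fraction of coordinates, a formulation well suited for the downstream FDR and power analyses in Section~\ref{sec_fdr}.
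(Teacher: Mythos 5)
Your argument is correct and is essentially the (unwritten) intended derivation: the paper states Corollary~\ref{cor_lp} without a proof, treating it as a direct consequence of Theorems~\ref{thm12} and \ref{thm_lbdd}. You identify the right reduction: the two empirical measures share the second marginal $\{\alpha_j\}$, so coordinatewise closeness of the first coordinates (outside a small random exceptional set $\mathcal{E}$) feeds directly into the Levy--Prokhorov definition, with the $|\mathcal{E}|/p$ term absorbing the exceptional indices and the deterministic error $\eta$ controlling the enlargement radius. One micro-technicality you elide: with $\mathcal{E}=\{j: |\hat t_j-\tilde t_j|>\eta\}$, indices on the boundary give $d\big((\tilde t_j,\alpha_j),A\big)\le\eta$ rather than strictly less, so $(\tilde t_j,\alpha_j)\in A^\eta$ is not literal; this is resolved either by working with any $\eta'>\eta$ and passing to the infimum in the definition of $\pi$, or by defining $\mathcal{E}$ with $\ge\eta$. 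Either fix preserves the bound $\pi\le \eta+|\mathcal{E}|/p$, and taking expectations gives the claim using $\mathbb{E}|\mathcal{E}|=\tilde O(p\,n^{-1/18})$ from Theorem~\ref{thm12} (resp.\ Theorem~\ref{thm_lbdd} for the $\gamma$ statement). No substantive gap.
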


Corollary~\ref{cor_lp} follows directly from
Theorem~\ref{thm12}, Theorem~\ref{thm_lbdd} and the definition of the Levy-Prokhorov metric, so the proof is omitted.
An analysis of the asymptotic impact of the approximation error on variable selection algorithms is given in 
Section~\ref{sec_fdr}.
Finally, we have the following result that the generalized debiased Lasso coefficients reduces to the traditional debiased Lasso coefficients in the case of Gaussian designs,
whose proof is deferred to Section~\ref{sec_gaussian}.

\begin{thm}\label{thm_gaussian}
Consider the setting of Definition~\ref{defn_param}, 
where Condition $\mathcal{P}$ is satisfied with Gaussian row distribution $Q=\mathcal{N}(0,\Sigma)$.
Assume that $\lambda\ge \lambda_{\mathcal{P}}$ is large enough (for some $\lambda_{\mathcal{P}}$ depending on $\mathcal{P}$).
Then with probability at least $1-\tilde{O}_{\mathcal{P}}(p^{-1/18})$,
there is a set of $j$ of cardinality at least $p(1-3p^{-1/18})$,
such that 
\begin{align}
\frac1{n}\check{B}_{:j}^{\top}(I-P_{\mathcal{B}})B_{:j}
=
\Sigma_{j|\sj}(1-\|\hat{\beta}^{(j)}\|_0/n)
+O_{\mathcal{P}}(n^{-1/9}),
\label{e_g156}
\\
\frac1{n}\check{A}_{:j}^{\top}(I-P_{\mathcal{A}})A_{:j}
=
\Sigma_{j|\sj}(1-\|\hat{\alpha}\|_0/n)
+O_{\mathcal{P}}(n^{-1/9}),
\label{e_g156.1}
\end{align}
(where $P_{\mathcal{B}}=P^{(j)}$ and $P_{\mathcal{A}}=P_j$), and consequently we have 
$
|\hat{\beta}^{U(j)}_j
-\hat{\beta}^{u(j)}_j|
=O_{\mathcal{P}}(n^{-1/9})
$
and 
$|\hat{\alpha}^U_j
-\hat{\alpha}^u_j|
=O_{\mathcal{P}}(n^{-1/9})$.
\end{thm}

\begin{rem}
\label{rem10}
In proving Theorem~\ref{thm_gaussian},
the key properties we used include \eqref{e_g156}, \eqref{e_g156.1},
and the fact that the linear estimator is optimal (i.e.\ $\mu_{:j}:=A_{:\setminus j}\Sigma_{\setminus j}^{-1}\Sigma_{\sj j}$).
Thus it is plausible that the result may be extended to a more general class of (possibly non-Gaussian) distributions.
We
conjecture that the result can be generalized to distributions for which property $\mathcal{P}$ holds and the linear estimator is optimal
(in which case \eqref{e_lb} must be true).
\end{rem}

\subsection{Beyond $\ell_1$ regularizer}
In this section, we extend the previous results to the case where the regularization term $\lambda\|\beta\|_1$ is replaced by $\sum\rho(\beta_j)$,
where $\rho$ is a general function satisfying:
\begin{align}
\rho(0)&=\psi(0)=0;
\\
\psi'&:=\rho''\in [\zeta_{2-},\zeta_{2+}];
\\
|\psi''|&\le \zeta_3,
\end{align}
where $\zeta_{2-},\zeta_{2+},\zeta_3>0$ are constants.
The proofs for such $\rho$ is simpler than the Lasso case, due to strong convexity.
Define 
\begin{align}
\bar{G}_{\mathcal{A}}
&:=\frac1{n}A_{:\setminus j}^{\top}A_{:\setminus j}
+\diag(\psi'(\hat{\alpha}_{\sj})),
\\
P_{\mathcal{A}}&:=\frac1{n}A_{:\sj}
\bar{G}_{\mathcal{A}}^{-1}
A_{:\sj}^{\top}.
\end{align}
The following theorems establish the nonasymptotic and asymptotic error bounds analogous to the $\ell_1$ case, whose proofs can be found in the appendix.
\begin{thm}\label{thm7}
Suppose that 
\begin{align}
\|A\|_{\rm op}
\vee
\|B\|_{\rm op}
\le D\sqrt{n},
\label{e40}
\\ 
\|\bar{G}_{\mathcal{A}}^{-1}
A_{:\sj}^{\top}
\check{B}_{:j}
\|_{\infty}
\le \Gamma\sqrt{n}.
\label{e41}
\end{align}
Then 
\begin{align}
\left|t(j,B,Y)-\frac1{n}\check{B}_{:j}^{\top}R
-\frac1{n}\check{B}_{:j}^{\top}(I-P_{\mathcal{A}})A_{:j}\hat{\alpha}_j\right|
\le 
\frac{c_2^{1.5}\zeta_3}{c_1^2\lambda \zeta_{2-}\sqrt{n}}\Gamma D^3
(|\hat{\alpha}_j|+|\hat{\beta}_j|)
\label{e43}
\end{align}
where $c_2:=D^2+\lambda \zeta_{2+}$ and $c_1:=\lambda \zeta_{2-}$.
\end{thm}

\begin{thm}\label{thm8}
Assume condition $\mathcal{P}$,
and let $C>0$ be arbitrary.
Define 
\begin{align}
\mathcal{G}:=\{\textrm{\eqref{e40} and \eqref{e41} for all $j$}\}.
\end{align}
Let ${\sf R}_j$ be the right side of \eqref{e43}.
Then we have $\mathbb{P}[\mathcal{G}^c]=O_{\mathcal{P},\zeta,C}(n^{-C})$ and 
$\mathbb{E}\left[\sum_{j=1}^n {\sf R}_j^2 1_{\mathcal{G}}\right]=O_{\mathcal{P},\zeta,C}(\ln^2 n)$.
\end{thm}

\section{Proof of the approximation formula}\label{sec_aformula}
The goal of this section is to prove Theorem~\ref{thm_error},
Theorem~\ref{thm5},
and 
Theorem~\ref{thm_approx2}.
\subsection{Intuitions}
Recall the optimization problems given in  \eqref{eq_alpha}\eqref{eq_beta}. Intuitively an update formula is possible because the Taylor expansion is asymptotically correct; the challenge though lies in the non-differentiability of the objective function and in showing that error is indeed negligible in the high-dimensional setting.
Before the proof let us first give a heuristic derivation of the approximate update formula.
Let $R:=Y-A\hat{\alpha}$ and $S:=Y-B\hat{\beta}$ denote the residuals.
From the normal equations we have
\begin{align}
-\frac1{n}A_{:\setminus j}^{\top}R
+\lambda\psi^{\alpha}_{\setminus j}&=0_{p-1},
\label{eq2_1}
\\
-\frac1{n}A_{:\setminus j}^{\top}S
+\lambda\psi^{\beta}_{\setminus j}&=0_{p-1},
\label{eq2_2}
\\
-\frac1{n}B_{:j}^{\top}S
+\lambda\psi^{\beta}_j&=0,
\label{eq2_3}
\end{align}
where the subdifferential $\psi^{\alpha}\in[-1,1]^p$
is intuitively the derivative of (the non-differentiable function) $\|\cdot\|_1$ at $\hat{\alpha}$.
Because of the non-differentiability, $\psi^{\alpha}$ is not a function $\hat{\alpha}$. But as a heuristic argument, we pretend that $\psi^{\alpha}=\psi(\hat{\alpha})$, where $\psi\colon \mathbb{R}^p\to\mathbb{R}^p$ applies element-wise the function of the derivative of the absolute value function.
We also pretend that $\psi(\cdot)$ is a smooth function so that we can Taylor expand $\psi(\cdot)$ around $\hat{\alpha}$. 
Now using
\begin{align}
S-R=A_{:\setminus j}(\hat{\alpha}_{\setminus j}-\hat{\beta}_{\setminus j})+A_{:j}\hat{\alpha}_j-B_{:j}\hat{\beta}_j,
\label{eqn3}
\end{align}
and ignoring higher order terms in the Taylor expansion, we obtain
\begin{align}
\bar{G}(\hat{\alpha}_{\setminus j}-\hat{\beta}_{\setminus j})
+u\hat{\alpha}_j
-v\hat{\beta}_j&\approx 0_{p-1},
\label{eq55}
\\
-\frac1{n}B_{:j}^{\top}R
-v^{\top}(\hat{\alpha}_{\setminus j}-\hat{\beta}_{\setminus j})
-\frac1{n}B_{:j}^{\top}A_{:j}\hat{\alpha}_j
+\frac1{n}B_{:j}^{\top}B_{:j}\hat{\beta}_j+\lambda
\psi(\hat{\beta}_j)&\approx 0,
\label{eq25}
\end{align}
where we defined
\begin{align}
\bar{G}&:=\frac1{n}A_{:\setminus j}^{\top}A_{:\setminus j}
+\lambda\diag(\psi'(\hat{\alpha}_{\setminus j}));
\label{e_barg}
\\
u&:=\frac1{n}A_{:\setminus j}^{\top}A_{:j};
\quad
v:=\frac1{n}A_{:\setminus j}^{\top}B_{:j}.
\end{align}
Cancelling $\hat{\alpha}_{\setminus}-\hat{\beta}_{\setminus j}$, we can solve for $\hat{\beta}_j$ to obtain
\begin{align}
(\frac1{n}\|B_{:j}\|^2-v^{\top}\bar{G}^{-1}v)\hat{\beta}_j
+\lambda\psi(\hat{\beta}_j)
\approx(\frac1{n}B_{:j}^{\top}A_{:j}
-v^{\top}\bar{G}^{-1}u)\hat{\alpha}_j
+\frac1{n}B_{:j}^{\top}R.
\label{eq59}
\end{align}
Using the matrix inversion formula, we see that $\bar{G}^{-1}$ is nonzero only in the principal submatrix corresponding to the nonzeros of $\hat{\alpha}_{\setminus j}$.
In the high dimension setting, supposing that the entries of $A$ are i.i.d.\ with unit variance, we have
$
(1-\frac{k}{n})\hat{\beta}_j
+\lambda\psi(\hat{\beta}_j)
\approx \frac1{n}B_{:j}^{\top}R
$,
therefore 
$
(1-\frac{k}{n})\hat{\beta}_j^{u}
\approx \frac1{n}B_{:j}^{\top}R
$,
which recovers the known formula of the debiased estimator \eqref{eq_betau} for i.i.d.\ features.
\begin{rem}
In the case of correlated features,
it is tempting to compute $\hat{\beta}_j$ from \eqref{eq59} using soft thresholding,
which is the idea in proving
Theorem~\ref{thm_approx2}. 
But as we will see the approximation error will not vanish unless the feature correlations are sufficiently small. 
On the other hand, with some additional algebra, we can show that the approximation error for the debiased estimator vanishes even when features have non-vanishing correlations.
\end{rem}

\subsection{Proof of Theorem~\ref{thm_error}}
To deal with the non-differentiability of  $\psi$ in \eqref{e_barg}, define
\begin{align}
\Lambda=\diag\left(\left\{\frac{\psi^{\alpha}_l-\psi^{\beta}_l}{\hat{\alpha}_l-\hat{\beta}_l}\right\}_{l\neq j}\right)
\label{eq_d}
\end{align}
where the $\frac{0}{0}$ case in \eqref{eq_d}
is resolved by setting $\frac{\psi^{\alpha}_l-\psi^{\beta}_l}{\hat{\alpha}_l-\hat{\beta}_l}=0$ if $\psi^{\alpha}_l=\psi^{\beta}_l\notin (-1,1)$, 
and $\frac{\psi^{\alpha}_l-\psi^{\beta}_l}{\hat{\alpha}_l-\hat{\beta}_l}=+\infty$ if $\psi^{\alpha}_l=\psi^{\beta}_l\in (-1,1)$.
Then set 
\begin{align}
G:=\frac1{n}A_{:\setminus j}^{\top}A_{:\setminus j}
+\lambda \Lambda.
\label{e_g}
\end{align}
Note that by replacing $\bar{G}$ with $G$ in \eqref{eq59}, exact equality is achieved, and we have
\begin{align}
(\frac1{n}\|B_{:j}\|^2-v^{\top}G^{-1}v)\hat{\beta}_j
+\lambda\psi^{\beta}_j
=
\frac1{n}B_{:j}^{\top}R
+
(\frac1{n}B_{:j}^{\top}A_{:j}
-v^{\top}G^{-1}u)\hat{\alpha}_j.
\label{eq_69}
\end{align}
Note that 
\begin{align}
\frac1{n}B_{:j}^{\top}R&=\frac1{n}\check{B}_{:j}^{\top}R
+\frac1{n}\mu_{:j}^{\top}S
+\frac1{n}\mu_{:j}^{\top}(R-S),
\label{eq66}
\end{align}
and next we will simplify the last term in \eqref{eq66} using \eqref{eqn3} and 
Proposition~\ref{prop_err}:
\begin{align}
\frac1{n}\mu_{:j}^{\top}(R-S)
&=
-\frac1{n}\mu_{:j}^{\top}
[A_{:\setminus j}(\hat{\alpha}_{\sj}-\hat{\beta}_{\sj})
+A_{:j}\hat{\alpha}_j
-B_{:j}\hat{\beta}_j]
\\
&=-\frac1{n}\mu_{:j}^{\top}A_{:\sj}G^{-1}v\hat{\beta}_j
+
\frac1{n}\mu_{:j}^{\top}A_{:\sj}G^{-1}u\hat{\alpha}_j
-
\frac1{n}\mu_{:j}^{\top}A_{:j}\hat{\alpha}_j
+
\frac1{n}\mu_{:j}^{\top}B_{:j}\hat{\beta}_j.
\end{align}
Note that even if $G$ is not invertible, we can show that $A_{:\sj}G^{-1}A_{:\sj}^{\top}$ is well-defined via limit.
Collecting terms and using \eqref{eq68_1} and $\lambda\psi^{\beta}_j
-\frac1{n}\mu_{j:}^{\top}S=\frac1{n}\check{B}_{:j}^{\top}S$, we see 
\eqref{eq_69} becomes:
\begin{align}
&\quad(\frac1{n}\|B_{:j}\|^2
-\frac1{n}\mu_{:j}^{\top}B_{:j}
-\frac1{n}\check{B}_{:j}^{\top}A_{:\sj}G^{-1}v)\hat{\beta}_j
+\frac1{n}\check{B}_{:j}^{\top}S
\nonumber
\\
&=\frac1{n}\check{B}_{:j}^{\top}R
+
(\frac1{n}\check{B}_{:j}^{\top}A_{:j}
-
\frac1{n}\check{B}_{:j}^{\top}A_{:\sj}G^{-1}u)\hat{\alpha}_j.
\label{eq69}
\end{align}
Next we estimate the coefficients on the two sides of \eqref{eq69}.
Define 
\begin{align}
\bar{G}_{\mathcal{A}}&:=\frac1{n}A_{:\setminus j}^{\top}A_{:\setminus j}
+\lambda\bar{\Lambda},
\label{e_ga}
\end{align}
where $\bar{\Lambda}$ is a diagonal matrix, $
\bar{\Lambda}_{ll}:=0$
if $l\in\mathcal{A}$
and
$\bar{\Lambda}_{ll}:=+\infty$
otherwise.
Define $\bar{G}_{\mathcal{B}}$ analogously but with $\mathcal{A}$ above replaced by $\mathcal{B}:=\{l\neq j\colon \chi^{\beta}_l\neq 0\}$.
Then the third term in the coefficient of $\hat{\beta}_j$ is
\begin{align}
\frac1{n}\check{B}_{:j}^{\top}A_{:\sj}G^{-1}v
&=\frac1{n^2}\check{B}_{:j}^{\top}A_{:\sj}\bar{G}_{\mathcal{B}}^{-1}A_{:\sj}^{\top}B_{:j}
+
\frac1{n^2}\check{B}_{:j}^{\top}A_{:\sj}(G^{-1}-\bar{G}_{\mathcal{B}}^{-1})A_{:\sj}^{\top}B_{:j}
\label{eq70}
\\
&=\frac1{n}\check{B}_{:j}^{\top}P_{\mathcal{B}}B_{:j}
+
\frac1{n^2}\check{B}_{:j}^{\top}A_{:\sj}(G^{-1}-\bar{G}_{\mathcal{B}}^{-1})A_{:\sj}^{\top}B_{:j}
\end{align}
where 
$P_{\mathcal{B}}:=\frac1{n}A_{:\sj}\bar{G}_{\mathcal{B}}^{-1}A_{:\sj}^{\top}$ denotes the projection onto the span of the columns corresponding to the indices $\mathcal{B}$.
Therefore the coefficient of $\hat{\beta}_j$ on the left side of \eqref{eq69} differs from $\frac1{n}\check{B}_{:j}^{\top}(I-P_{\mathcal{B}})B_{:j}$ by at most
\begin{align}
\frac1{n^2}|\check{B}_{:j}^{\top}A_{:\sj}(G^{-1}-\bar{G}_{\mathcal{B}}^{-1})A_{:\sj}^{\top}B_{:j}|
&\le 
\frac1{n^2}\left\|
\check{B}_{:j}^{\top}A_{:\sj}(G^{-1}-\bar{G}_{\mathcal{B}}^{-1})A_{:\sj}^{\top}
\right\|_2
\left\|B_{:j}\right\|_2
\\
&\le 
\frac1{n}
\left\|
\check{B}_{:j}^{\top}(P_{\mathcal{A}\cup\mathcal{J}}-P_{\mathcal{A}\setminus\mathcal{J}})
\right\|_2\left\|B_{:j}\right\|_2
\label{e52}
\\
&\le \frac1{n}\cdot \Gamma \sqrt{|\mathcal{J}|}\cdot D\sqrt{n},
\end{align}
where \eqref{e52} follows from Lemma~\ref{lem5}.
Similarly, for one term in the coefficient for $\hat{\alpha}_j$ in \eqref{eq69},
\begin{align}
\left|\frac1{n}\check{B}_{:j}^{\top}A_{:\sj}G^{-1}u
-\frac1{n}\check{B}_{:j}^{\top}P_{\mathcal{A}}A_{:j}
\right|
&=
\frac1{n^2}\left|\check{B}_{:j}^{\top}A_{:\sj}(G^{-1}-\bar{G}_{\mathcal{A}}^{-1})A_{:\sj}^{\top}A_{:j}
\right|
\\
&\le 
\frac1{n^2}\left\|\check{B}_{:j}^{\top}A_{:\sj}(G^{-1}-\bar{G}_{\mathcal{A}}^{-1})A_{:\sj}^{\top}\right\|_2
\left\|A_{:j}
\right\|_2
\\
&\le \Gamma D\sqrt{\varepsilon}.
\label{eq57}
\end{align}
Then \eqref{eq69} yields
\begin{align}
&\quad\left|\frac1{n}\check{B}_{:j}^{\top}(I-P_{\mathcal{B}})B_{:j}\hat{\beta}_j^U-\frac1{n}\check{B}_{:j}^{\top}R
-
\frac1{n}\check{B}_{:j}^{\top}(I-P_{\mathcal{A}})A_{:j}
\hat{\alpha}_j
\right|
\le \Gamma D\sqrt{\varepsilon}
(|\hat{\alpha}_j|+|\hat{\beta}_j|).
\end{align}
Finally the proof is completed by 
\begin{align}
\frac1{n}|\check{B}_{:j}^{\top}(P_{\mathcal{A}}-P_{\mathcal{B}})B_{:j}|&\le \frac{D}{\sqrt{n}}\|\check{B}_{:j}^{\top}(P_{\mathcal{A}}-P_{\mathcal{B}})\|_2
\\
&\le D\Gamma \sqrt{\varepsilon}.
\end{align}

\subsection{Proof of Theorem~\ref{thm5}}
\label{pfthm5}
Similar to Theorem~\ref{thm_error},
but make 
changes to \eqref{eqn3}, \eqref{eq55}, \eqref{eq25}, \eqref{eq59}, 
and Section~\ref{sec_s21}
by replacing $\hat{\beta}_j$ with $\hat{\beta}_j-\alpha_j$,
and switch the notations $A\leftrightarrow B$,
$\hat{\alpha}\leftrightarrow \hat{\beta}$.

\subsection{Proof of Theorem~\ref{thm_approx2}}
Recall that we showed  \eqref{eq_69}.
Now
\begin{align}
v^{\top}G^{-1}v-\frac1{n}B_{:j}^{\top}P_{\mathcal{A}}B_{:j}
&=
\frac1{n}B_{:j}^{\top}EB_{:j}
\\
&=\frac1{n}\mu_{:j}^{\top}E\mu_{:j}
+\frac{2}{n}\mu_{:j}^{\top}E\check{B}_{:j}
+\frac1{n}\check{B}_{:j}^{\top}E\check{B}_{:j},
\end{align}
where $E:=\frac1{n}A_{:\sj}GA_{:\sj}^{\top}-P_{\mathcal{A}}$,
and recall that $\mu_{:j}=B_{:j}-\check{B}_{:j}$.
As shown in Lemma~\ref{lem5},
\begin{align}
-(P_{\mathcal{A}\cup \mathcal{J}}-P_{\mathcal{A}\setminus \mathcal{J}})
\preceq
E,E^2
\preceq P_{\mathcal{A}\cup \mathcal{J}}-P_{\mathcal{A}\setminus \mathcal{J}}
\end{align}
where we note that $P_{\mathcal{A}\cup \mathcal{J}}-P_{\mathcal{A}\setminus \mathcal{J}}$ is a projection matrix.
Therefore,
\begin{align} 
\left|v^{\top}G^{-1}v-\frac1{n}B_{:j}^{\top}P_{\mathcal{A}}B_{:j}\right|
&\le 
\frac1{n}\|\mu_{:j}\|_2^2
+
\frac{2}{n}\|\mu_{:j}\|_2\|E\check{B}_{:j}\|_2
+
\frac1{n}\check{B}_{:j}^{\top}(P_{\mathcal{A}\cup \mathcal{J}}-P_{\mathcal{A}\setminus \mathcal{J}})\check{B}_{:j},
\label{eq33}
\\
&\le 
\tau^2+2\tau\Gamma\sqrt{\varepsilon}+\varepsilon\Gamma^2.
\label{eq34}
\end{align}
Similarly, we have
\begin{align}
\left|v^{\top}G^{-1}u-\frac1{n}B_{:j}^{\top}P_{\mathcal{A}}A_{:j}\right|
&\le \frac1{n}\|\mu_{:j}\|_2^2
+
\frac1{n}\|\mu_{:j}\|_2
(\|E\check{B}_{:j}\|_2
+\|E\check{A}_{:j}\|_2)
+\frac1{n}\|E\check{B}_{:j}\|_2\|\check{A}_{:j}\|_2
\\
&\le 
\tau^2+\tau(\Gamma\sqrt{\varepsilon}+\check{D})+\Gamma\sqrt{\varepsilon}\check{D}.
\end{align}
Then the claim follows from \eqref{eq_69}.

\section{Application in false discovery rate control}
\label{sec_fdr}
In this section, we provide a theoretical analysis showing that, in certain settings, 
the power of the knockoff filter can be strictly improved by the local knockoff filter and the conditional randomization test. 
Although these methods have been studied empirically in the literature and are typically at least $p$ times slower than knockoffs, 
we show that this $p$-fold computational overhead can be eliminated by using the approximation formula to accelerate the computation.

\subsection{Review of the knockoff  filter and its limitation}
Consider the variable selection problem with observations  $Y\in\mathbb{R}^n$ and $A\in\mathbb{R}^{n\times p}$ 
The false discovery rate (FDR) is defined as
\begin{align}
\fdr:=\mathbb{E}\left[
FDP\right],
\end{align}
where $FDP:=\frac{|H_0\wedge \hat{H}_1|}{|\hat{H}_1|}$, 
$H_0\subseteq \{1,2,\dots,p\}$ is the set of true null variables,
and $\hat{H}_1\subseteq \{1,2,\dots,p\}$ is the set of selected variables.
A good variable selection algorithm is expected to control the FDR below a given budget, while ensuring a large statistical power:
\begin{align}
\pw:=\mathbb{E}\left[
\frac{|H_1\wedge \hat{H}_1|}{|H_1|}
\right].
\end{align}

The present paper focuses on the Model-X knockoff framework \citep{candes2018panning}, which aligns with our emphasis on resampling-based procedures. In this framework, a knockoff matrix $\tilde{A}$ is constructed so that $(A, \tilde{A})$ is exchangeable, while $\tilde{A}$ is conditionally independent of $Y$.
Then we regress $Y$ on the matrix $[A,\tilde{A}]$, 
so that the test statistics for $\tilde{A}$ can be used to estimate the number of false discoveries.
For a full description, see \cite{candes2018panning}.

The exchangeability condition may create challenges in the construction of the knockoff distribution,
and the increase in the number of features (from $p$ to $2p$) can often induce a power loss
\citep{weinstein2017power}\citep{li2022causal}.
In this section, we provide a simple example which always suffers from this suboptimality, regardless of the choice of the knockoff distribution (Theorem~\ref{thm_19}).
For simplicity, consider the $2p<n$ regime, and we simply use the least squares estimator rather than the Lasso.
We recall the following result from \cite{liu2019power}
which gives a necessary and sufficient condition for asymptotic consistency of the knockoff filter in this regime:
\begin{prop}\label{prop17}{[Informal; see Theorem~5 and Proposition~6 in \cite{liu2019power}  for precise statements]}
Let $\pw^{(n)}$ be the power of the knockoff filter with nominal FDR budget $q\in(0,1)$ 
for sample size $n$.
Suppose the standard distributional limit assumption is true.
A necessary and sufficient condition for $\lim_{n\to \infty}\pw^{(n)}$ to converge to 1 is that the empirical distribution of $(\underline{\Theta}_{jj}^{(n)})_{j=1}^{2p}$ converges weakly to a point mass at 0, where $\underline{\Theta}^{(n)}$ is the inverse covariance (precision) matrix of the true and the knockoff variables.
\end{prop}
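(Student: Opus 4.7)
The plan is to exploit the standard distributional limit for the least-squares (or debiased-Lasso) coefficients computed on the augmented design $[A,\tilde{A}]$. Under this limit each coefficient is asymptotically Gaussian, centered at its ground truth, with variance proportional to the diagonal entry $\underline{\Theta}_{jj}^{(n)}$ of the precision matrix of $(A,\tilde{A})$. With this description in hand the knockoff $W$-statistic $W_j=|\hat{\beta}_j|-|\hat{\beta}_{j+p}|$ separates cleanly into a location term that depends on whether $j$ indexes a true signal and a noise term whose scale is governed by $\sqrt{\underline{\Theta}_{jj}^{(n)}}$ and $\sqrt{\underline{\Theta}_{(j+p)(j+p)}^{(n)}}$.

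First I would lift the knockoff$+$ selection rule to the empirical distribution of the triples $(\alpha_j,\underline{\Theta}_{jj}^{(n)},\underline{\Theta}_{(j+p)(j+p)}^{(n)})$, expressing the FDR-calibrated threshold $T_n$ as a functional of this empirical distribution via the usual knockoff cutoff formula. Assuming weak convergence of this empirical distribution, $T_n$ should stabilize to a deterministic $T^{\star}$ determined by the limit; uniform (Kolmogorov or Levy-Prokhorov) convergence of the empirical $W$-distribution then justifies the substitution in the FDP functional.

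Second I would write the asymptotic $\pw^{(n)}$ as the limiting probability of $\{W_j>T^{\star}\}$ averaged over the signal indices. For the forward direction, if $\underline{\Theta}_{jj}^{(n)}$ concentrates at $0$ the noise scale vanishes while each signal location is bounded below, so $\mathbb{P}(W_j>T^{\star})\to 1$ for every nonzero $\alpha_j$ and hence $\pw^{(n)}\to 1$. For the converse, if concentration fails then a positive-mass subset of signals has $\underline{\Theta}_{jj}^{(n)}$ bounded below; on this subset the Gaussian noise remains nondegenerate, so $\mathbb{P}(W_j>T^{\star})$ stays strictly below $1$ because $T^{\star}>0$ whenever the nominal FDR budget satisfies $q<1$.

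The main obstacle is the continuity of the map from the empirical $W$-distribution to the cutoff $T_n$: the knockoff$+$ rule is an argmin over an empirical ratio, and small fluctuations of the empirical $W$-distribution near the threshold can in principle perturb the cutoff by $\Theta(1)$. Controlling this requires showing that the limiting ratio function crosses the $q$-level transversally at $T^{\star}$, so that uniform convergence of the empirical $W$-distribution translates into convergence of cutoffs. A secondary subtlety on the converse side is ruling out that signal strength and precision-diagonal magnitude are adversarially aligned along the bad subsequence; under the standard distributional limit this reduces to showing that the joint empirical distribution of $(\alpha_j,\underline{\Theta}_{jj}^{(n)})$ admits a well-defined limit that assigns positive mass to signals with bounded $|\alpha_j|$ and $\underline{\Theta}_{jj}^{(n)}$ bounded away from $0$.
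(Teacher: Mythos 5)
The paper does not prove Proposition~\ref{prop17}; it explicitly defers to \cite{liu2019power} for the precise statements and proofs (``Theorem~5 and Proposition~6''), and offers only the heuristic $\hat{\underline{\alpha}} \approx \underline{\alpha} + \tau\underline{\Theta}^{1/2}z$ to motivate why power should go to $1$ exactly when the diagonals of $\underline{\Theta}$ vanish. Your sketch follows this same Gaussian-limit intuition and fills in plausible bookkeeping, so you are on the intended route.

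That said, the places you flag as ``obstacles'' are not peripheral: they are the content of the cited proof, and in your sketch they remain genuine gaps. First, the stability of the knockoff cutoff $T_n$ as a functional of the empirical $W$-distribution is not automatic, as you note, and proving transversal crossing of the FDP ratio at $T^{\star}$ requires structure on the limiting law that your sketch does not exhibit. Second, the converse direction has a real hole: non-convergence of $(\underline{\Theta}_{jj}^{(n)})_{j=1}^{2p}$ to a point mass at $0$ only tells you that a positive fraction of all $2p$ diagonals are bounded below, not that this fraction intersects the signal set. You describe the needed alignment property as a ``secondary subtlety,'' but it is a core ingredient: one must know that the joint limit of $(\alpha_j, \underline{\Theta}_{jj}^{(n)})$ puts positive mass on pairs with $\alpha_j\neq 0$, $|\alpha_j|$ bounded, and $\underline{\Theta}_{jj}$ bounded below. (The exchangeability of $\underline{\Sigma}$ in \eqref{e_sigmau} does force $\underline{\Theta}_{jj} = \underline{\Theta}_{(j+p)(j+p)}$, so the diagonals come in pairs, which removes some but not all of this concern.) Third, in the sufficiency direction, ``each signal location is bounded below'' and ``$T^{\star}>0$'' cannot both be taken for granted: when the noise vanishes, the null $W$-statistics concentrate at $0$ and the threshold tends to $0$ as well, so one must argue about the rate at which $T_n\to 0$ relative to the distribution of small but nonzero $|\alpha_j|$. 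These are exactly the technical steps the cited reference supplies and that your proposal leaves open.
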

The setting of \cite{liu2019power}
is to use the debiased Lasso coefficients as test statistics, but when the Lasso regularization $\lambda=0$ we recover the case of least squares statistics.
Moreover, \cite{liu2019power} assumes the existence of the standard distributional limit defined in \cite{javanmard2014hypothesis},
so the empirical distribution of $(\underline{\Theta}_{jj}^{(n)})_{j=1}^{2p}$ has a weak limit,
and by
``$\lim_{n\to \infty}\pw^{(n)}$ to converge to 1''
in Proposition~\ref{prop17} we mean $\lim_{n\to \infty}\pw^{(n)}$ is viewed as a function of this limiting distribution.

The intuition for Proposition~\ref{prop17} is as follows:
Let $\underline{\alpha}\in\mathbb{R}^{2p}$ be the true coefficients and zero paddings for the knockoff coefficients.  
There exists $\tau>0$,
bounded above and below, such that 
\begin{align}
\hat{\underline{\alpha}}\approx \underline{\alpha}+\tau\underline{\Theta}^{1/2}z
\end{align}
where $z\sim \mathcal{N}(0,I_{2p})$.
Therefore if variables are selected based on a threshold test for the coefficients of $\hat{\underline{\alpha}}$, 
then asymptotic consistency is true only if most diagonal entries of $\underline{\Theta}$ vanish (equivalently, the empirical distribution of the diagonal entries must converge to zero).

For Gaussian knockoff filters, it is known
\citep{candes2018panning}
that selecting a knockoff distribution satisfying exchangeability is equivalent to choosing $s$ such that the joint covariance matrix of the true and knockoff variables,
\begin{align}
\underline{\Sigma}:=
\begin{pmatrix}
\Sigma  & \Sigma-\diag(s)
\\
\Sigma-\diag(s)  & \Sigma
\end{pmatrix}
\label{e_sigmau}
\end{align}
is positive semidefinite.
Thus we see that  the Schur complement  $2S-S\Sigma^{-1}S$ plays a key role.
We now establish an auxiliary result that will help building a suboptimality example.
\begin{lem}\label{lem15}
Set $\Sigma^{-1}:=E\in\mathbb{R}^{p\times p}$ as the matrix whose entries are all 1.
Suppose that $s\in(0,\infty)^p$, $S:=\diag(s)$,
and we have $S^{-1}\succeq \frac1{2}\Sigma^{-1}$.
Let $d_1$, \dots, $d_p$ be the diagonals of the positive semidefinite matrix $(2S-S\Sigma^{-1}S)^{-1}$. 
Then $\frac1{p}|\{j\colon d_j>\frac{p}{10}\}|\ge \frac{3}{5}$.
\end{lem}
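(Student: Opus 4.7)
The plan is to exploit the rank-one structure of $\Sigma^{-1} = E = \mathbf{1}\mathbf{1}^\top$: compute $(2S - S\Sigma^{-1}S)^{-1}$ in closed form via Sherman--Morrison and conclude with an elementary counting argument. First, I would reduce the matrix hypothesis $S^{-1} \succeq \tfrac{1}{2}E$ to the scalar inequality $\sigma := \sum_{i=1}^p s_i \le 2$. After the change of variables $y_i := x_i/\sqrt{s_i}$, the quadratic form $x^\top(S^{-1} - \tfrac{1}{2}E)x$ becomes $\|y\|_2^2 - \tfrac{1}{2}\bigl(\sum_i y_i\sqrt{s_i}\bigr)^2$, and Cauchy--Schwarz identifies nonnegativity of this form for all $x$ with exactly $\sigma \le 2$.

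Next, since $(SES)_{ij} = s_is_j$, the matrix of interest is $M := 2\diag(s) - s s^\top$ (with $s$ also denoting the vector of $s_i$'s), a rank-one perturbation of a diagonal. Applying Sherman--Morrison with $D := 2\diag(s)$, and using $D^{-1}s = \tfrac{1}{2}\mathbf{1}$ and $s^\top D^{-1} s = \sigma/2$, one obtains, for $\sigma < 2$,
\[
M^{-1} = \tfrac{1}{2}\diag(1/s_i) + \frac{1}{2(2-\sigma)}\,\mathbf{1}\mathbf{1}^\top,
\]
and therefore
\[
d_j \;=\; \frac{1}{2s_j} + \frac{1}{2(2-\sigma)}.
\]

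The last step is a case split on $\sigma$. If $\sigma \ge 2 - 5/p$, then $\tfrac{1}{2(2-\sigma)} \ge p/10$ already, so $d_j > p/10$ for every $j$ and the claim is immediate. Otherwise set $c := p/10 - \tfrac{1}{2(2-\sigma)} \in (0, p/10]$; then $d_j > p/10$ iff $s_j < 1/(2c)$. By Markov applied to $\sum_i s_i \le \sigma \le 2$, the number of indices with $s_j \ge 1/(2c)$ is at most $2c\sigma \le 2\cdot(p/10)\cdot 2 = 2p/5$, so at least $3p/5$ of the indices satisfy $d_j > p/10$.

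I do not anticipate a real obstacle: the argument is essentially algebraic once the rank-one structure is exploited, and the counting is a one-line Markov bound. The only mild subtlety is that $\Sigma$ itself is not invertible, but $\Sigma$ enters only through $S\Sigma^{-1}S = ss^\top$, so no inverse of $\Sigma$ is ever needed; the boundary case $\sigma = 2$ (where $M$ is singular) is absorbed by the perturbation remark already in the statement.
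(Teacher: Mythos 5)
Your proof is correct and follows essentially the same route as the paper's: both reduce the PSD hypothesis to $\|s\|_1\le 2$, invert the rank-one perturbation of the diagonal (the paper via $(I-aa^\top)^{-1}=I+\tfrac{aa^\top}{1-\|a\|^2}$ with $a=\tfrac1{\sqrt2}S^{1/2}\mathbf 1$, you via Sherman--Morrison, arriving at the identical formula $d_j=\tfrac1{2s_j}+\tfrac1{2(2-\|s\|_1)}$), and finish with a Markov count. The only stylistic difference is that the paper simply discards the nonnegative term $\tfrac1{2(2-\|s\|_1)}$ and applies Markov to the lower bound $d_j>\tfrac1{2s_j}$, while you retain it and do a case split on $\sigma$; both are fine, the paper's shortcut is marginally cleaner.
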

Proof can be found in Section~\ref{sec_pvar}.
Note that $\Sigma^{-1}$ in the lemma is not invertible, but we do not need its inverse; alternatively, we may perturb it to make it invertible, and then pass the final result to a limit.

\begin{defn}\label{defn_oracle}
Given any statistic $\hat{\alpha}$ (Lasso, debiased Lasso, or OLS), computed using $Y$ and the true feature matrix $A\in\mathbb{R}^{n\times p}$,
the \emph{oracle threshold algorithm}
selects $\{j\colon |\hat{\alpha}_j|>t\}$,
where $t$ is a deterministic number for which the FDR is exactly the budget $q$.
\end{defn}

The oracle threshold algorithm is not realistic since the threshold is not data-driven.
Nevertheless, it serves as a natural benchmark,
and has been considered in, e.g.\ \cite{ke2024power}, under the name \emph{prototype method}.
We then have the following result, whose proof is omitted since it is analogous to Proposition~\ref{prop17}.
\begin{prop}
Assume the $p<n$ regime and $\hat{\alpha}$ is the OLS solution.
Under the standard distributional limit assumption, a necessary and sufficient condition for asymptotic consistency of the oracle threshold algorithm is that the empirical distribution of $(\Theta_{jj}^{(n)})_{j=1}^p$ converges weakly to a point mass at 0, where $\Theta^{(n)}$ is the precision matrix of the true variables.
\end{prop}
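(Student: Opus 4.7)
The plan is to follow the same road map as Proposition~\ref{prop17}, replacing the Lasso-plus-knockoff distributional limit by the elementary distributional limit for OLS in the $p<n$ regime. First I would derive the OLS analog of the high-dimensional Gaussian approximation: since $\hat{\alpha}-\alpha = (A^{\top}A)^{-1}A^{\top}w$ and $w\sim\mathcal{N}(0,n\sigma^{2}I_{n})$, and $\frac{1}{n}A^{\top}A\to\Sigma$ under $\mathcal{P}$, we get the explicit approximation
\begin{align}
\hat{\alpha}\approx \alpha+\sigma\,\Theta^{1/2}z,\qquad z\sim\mathcal{N}(0,I_{p}),
\end{align}
so in particular the marginal $\hat{\alpha}_{j}\approx\alpha_{j}+\sigma\sqrt{\Theta_{jj}}\,z_{j}$. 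This is the counterpart of the $\tau\underline{\Theta}^{1/2}z$ approximation invoked after Proposition~\ref{prop17}, except here the ``noise scale'' $\tau=\sigma$ is explicit and no fixed-point equation is required.

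Next I would invoke the standard distributional limit assumption to pass to the joint empirical limit of $(\alpha_{j},\hat{\alpha}_{j},\Theta_{jj})_{j=1}^{p}$, so that the (limiting) FDR and power of the oracle threshold $\{j:|\hat{\alpha}_{j}|>t\}$ can be written as deterministic functionals of this joint law:
\begin{align}
\pw(t)&\to \mathbb{E}\bigl[\,P(|\alpha+\sigma\sqrt{\Theta}Z|>t)\,\big|\,\alpha\neq 0\bigr],\\
\fdr(t)&\to \frac{P(\alpha=0)\,\mathbb{E}[\,2\Phi^{c}(t/(\sigma\sqrt{\Theta}))\,|\,\alpha=0]}{P(\alpha=0)\,\mathbb{E}[\,2\Phi^{c}(t/(\sigma\sqrt{\Theta}))\,|\,\alpha=0]+P(\alpha\neq 0)\,\pw(t)}.
\end{align}
Under the standard distributional limit assumption the marginal distributions of $\Theta_{jj}$ on the null and non-null indices coincide with the overall limit, which is what makes the empirical distribution of $(\Theta_{jj})_{j=1}^{p}$ the relevant object.

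For the sufficient direction I would argue: if the empirical distribution of $\Theta_{jj}$ converges weakly to $\delta_{0}$, then for every $\varepsilon>0$ the fraction of indices with $\Theta_{jj}>\varepsilon$ vanishes, so on a $1-o(1)$ fraction of coordinates $\hat{\alpha}_{j}$ is within $o(1)$ of $\alpha_{j}$; picking $t$ slightly below the smallest signal magnitude yields perfect separation up to a vanishing fraction, and since the oracle algorithm is defined to achieve exactly $\fdr=q$, its power is at least that of this nearly-perfect rule, so $\pw\to 1$. For the necessary direction I would argue contrapositively: if the limiting empirical distribution of $\Theta_{jj}$ puts positive mass $\eta>0$ on $(c,\infty)$ for some $c>0$, then under the standard limit the same fraction $\eta$ of signal indices has $\Theta_{jj}>c$, and on those $P(|\alpha_{j}+\sigma\sqrt{\Theta_{jj}}z_{j}|>t)$ is bounded strictly below $1$ whenever $t$ is bounded in terms of the signal magnitudes, so $\pw$ is bounded away from $1$; any attempt to push $t$ small enough to recover these signals simultaneously produces order-$p$ false positives from the corresponding fraction of nulls and forces $\fdr$ above $q$, so the oracle threshold cannot satisfy both constraints.

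The main obstacle, and the step that deserves the most care, is the necessary direction, specifically the joint handling of the FDR constraint $\fdr(t)=q$ and the power criterion $\pw(t)\to 1$. The key point is that, under the distributional limit, the oracle threshold $t^{\star}$ has a deterministic limit $t^{\star}_{\infty}$, and one must show that any $t^{\star}_{\infty}$ making the limiting FDR equal to $q$ forces the limiting power to miss a positive fraction of the signals lying on the ``bad'' coordinates $\{\Theta_{jj}>c\}$. This is essentially the same monotonicity-and-decoupling argument used in \cite{liu2019power} for the $2p$-dimensional knockoff precision matrix $\underline{\Theta}$, now applied to the $p$-dimensional $\Theta$; the rest of the proof is routine once this comparison is in place, which is why the authors omit it.
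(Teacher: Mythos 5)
Your road map follows the paper's own treatment: the paper omits the proof with a pointer to the analogy with Proposition~\ref{prop17} and the intuitive Gaussian approximation $\hat{\underline{\alpha}}\approx\underline{\alpha}+\tau\underline{\Theta}^{1/2}z$ given right after it, and your sketch is essentially a fleshed-out version of that, specialized to OLS. Two points are worth flagging.

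First, the derivation of the noise scale is wrong as stated. The step ``$\frac{1}{n}A^{\top}A\to\Sigma$ under $\mathcal{P}$'' does not hold in operator norm in the proportional regime $p/n\asymp 1$; the spectrum of $\frac1nA^\top A$ is Marchenko--Pastur-spread around that of $\Sigma$, and the inverse amplifies this. The correct computation (e.g.\ via the inverse Wishart mean $\mathbb{E}[(A^\top A)^{-1}]=\Theta/(n-p-1)$, combined with the convention $w\sim\mathcal{N}(0,n\sigma^2 I_n)$ from Definition~\ref{defn_param}) gives
\begin{align}
\tau^2 \;=\; \frac{n\sigma^2}{n-p-1}\;\to\;\frac{\sigma^2}{1-p/n}\,,
\end{align}
not $\sigma^2$. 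This is exactly the kind of scale correction that the proportional regime is designed to expose, so you should not claim that ``no fixed-point equation is required'' gives $\tau=\sigma$. That said, the error is harmless for the proposition: what is used is only that $\tau$ is bounded above and below, and the correct $\tau$ is indeed $\Theta(1)$ for fixed $\delta>1$.

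Second, the necessary direction as you state it implicitly relies on a decoupling between the signal locations $\{j:\alpha_j\neq 0\}$ and the precision diagonals $\Theta_{jj}$. You gesture at this (``the marginal distributions of $\Theta_{jj}$ on the null and non-null indices coincide with the overall limit''), but it deserves to be spelled out, since without it the implication fails: if the mass of $\Theta_{jj}$ on $(c,\infty)$ were carried entirely by null coordinates while all signals had small $\Theta_{jj}$ and magnitudes bounded away from zero, one can arrange for power $\to 1$ at an FDR-$q$ threshold even though the empirical law of $\Theta_{jj}$ is not $\delta_0$. The resolution is that this decoupling is precisely part of what the ``standard distributional limit assumption'' of \cite{javanmard2014hypothesis} buys (the joint empirical limit factorizes in the relevant way). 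Once you state that you are using this, the contrapositive argument — that a positive fraction of signals sits on bad coordinates and raising $t$ enough to kill the resulting false positives necessarily caps the power below one — is the same monotonicity argument the paper invokes via Proposition~\ref{prop17} and \cite{liu2019power}.
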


By Schur's complement theorem, it is easy to see that 
the diagonals of $\underline{\Theta}$ dominate the corresponding diagonals of $\Theta$, 
so the knockoff algorithm is asymptotically consistent only if the oracle threshold algorithm is. 
Using Lemma~\ref{lem15}, we can construct an example in the following theorem exhibiting a strict separation:
the knockoff filter is not asymptotically consistent, no matter how $s$ is chosen in \eqref{e_sigmau}, 
even though the oracle threshold algorithm remains asymptotically consistent.

\begin{thm}\label{thm_19}
Let $\Sigma^{-1} = a_p E$, where $E$ is as in Lemma~\ref{lem15}, and $a_p$ is a sequence satisfying $\limsup_{p\to\infty} a_p = 0$ and $p a_p \ge 1$. 
Then
\[
\frac{1}{2p}\big|\{j : (\underline{\Sigma}^{-1})_{jj} > 1/10\}\big| \ge \frac{3}{5}.
\]
In particular, while the diagonal entries of $\Sigma^{-1}$ converge to $0$, those of the knockoff precision matrix $\underline{\Sigma}^{-1}$ do not (in the sense of weak convergence of empirical distributions).
\end{thm}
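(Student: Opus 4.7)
The plan is to reduce the statement to Lemma~\ref{lem15} by a rescaling, after first computing the diagonals of $\underline{\Sigma}^{-1}$ in closed form. First I would block-diagonalize $\underline{\Sigma}$ using the orthogonal transform $U = \tfrac{1}{\sqrt{2}}\bigl(\begin{smallmatrix} I & I \\ I & -I\end{smallmatrix}\bigr)$, which yields $U^{\top}\underline{\Sigma}U = \diag(2\Sigma - S,\,S)$. Inverting and conjugating back shows that the two $p \times p$ diagonal blocks of $\underline{\Sigma}^{-1}$ are both equal to $\tfrac{1}{2}((2\Sigma - S)^{-1} + S^{-1})$. On the other hand, the Schur-complement formula applied to $\underline{\Sigma}$ identifies the top-left block directly as $(2S - S\Sigma^{-1}S)^{-1}$. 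Combining,
\begin{align*}
(\underline{\Sigma}^{-1})_{jj} = (\underline{\Sigma}^{-1})_{p+j,\,p+j} = d_j := \bigl[(2S - S\Sigma^{-1}S)^{-1}\bigr]_{jj}, \qquad j = 1, \ldots, p.
\end{align*}

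Next I would rescale to match Lemma~\ref{lem15}. Put $S_0 := a_p S$ and let $\Sigma_0^{-1} := E$ be the all-ones matrix as in Lemma~\ref{lem15}. The knockoff feasibility condition $\underline{\Sigma}\succeq 0$ is equivalent to $S^{-1} \succeq \tfrac{1}{2}\Sigma^{-1} = \tfrac{a_p}{2}E$, which after multiplying both sides by $a_p$ becomes $S_0^{-1} \succeq \tfrac{1}{2}\Sigma_0^{-1}$, exactly Lemma~\ref{lem15}'s hypothesis. Using $\Sigma^{-1} = a_p E$, the identity $2S - S\Sigma^{-1}S = a_p^{-1}(2 S_0 - S_0 \Sigma_0^{-1} S_0)$ yields $d_j = a_p d_{0,j}$, where $d_{0,j} := [(2 S_0 - S_0 \Sigma_0^{-1} S_0)^{-1}]_{jj}$ is precisely the quantity bounded in Lemma~\ref{lem15}. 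Hence $|\{j: d_{0,j} > p/10\}| \geq 3p/5$, and for each such $j$ the hypothesis $pa_p \geq 1$ gives $d_j = a_p d_{0,j} > a_p p/10 \geq 1/10$.

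Counting both diagonal blocks, at least $2 \cdot 3p/5 = 6p/5$ of the $2p$ diagonal entries of $\underline{\Sigma}^{-1}$ exceed $1/10$, which is the claim $\tfrac{1}{2p}|\{j: (\underline{\Sigma}^{-1})_{jj} > 1/10\}| \geq \tfrac{3}{5}$. The ``in particular'' assertion then follows since the diagonals of $\Sigma^{-1}$ are all equal to $a_p \to 0$, whereas a fraction at least $\tfrac{3}{5}$ of the empirical distribution of the diagonals of $\underline{\Sigma}^{-1}$ lies to the right of $\tfrac{1}{10}$ uniformly in $p$, ruling out weak convergence to a point mass at $0$. The main subtlety, which is the main potential obstacle, is the singularity of $\Sigma^{-1} = a_p E$: as flagged in Lemma~\ref{lem15}, one handles this by perturbing to $\Sigma_\epsilon^{-1} := a_p E + \epsilon I$ and then sending $\epsilon \downarrow 0$. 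A Sherman--Morrison calculation shows that $(2S - S\Sigma^{-1}S)^{-1}$ and the limiting PSD constraint (which in the limit reduces to $\sum_j s_j \le 2/a_p$) depend continuously on $\epsilon$ near $0$, so the limit is harmless and the counting argument above goes through verbatim.
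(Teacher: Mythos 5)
Your proof is correct and takes essentially the same route as the paper: Schur's complement identifies the diagonal blocks of $\underline{\Sigma}^{-1}$ with $(2S - S\Sigma^{-1}S)^{-1}$, the rescaling $S_0 = a_p S$ reduces to Lemma~\ref{lem15}, and $p a_p \ge 1$ converts the $p/10$ threshold into $1/10$. You supply a few details the paper leaves implicit (the orthogonal block-diagonalization of $\underline{\Sigma}$ showing the two diagonal blocks coincide, the explicit rescaling $d_j = a_p d_{0,j}$, and the limiting argument for the singular $E$), but the key lemma and overall structure are identical.
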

Proof can be found in Appendix~\ref{sec_pvar}.

\subsection{Local knockoff filter}
\label{sec_local}
As mentioned in Section~\ref{sec_intro}, 
a variant of the knockoff method that resamples only a single feature in each Lasso run, referred to as the \emph{local knockoff filter} (or ``one-at-a-time knockoffs'' in the concurrent work \citep{guan2025one}), often achieves higher statistical power. 
In fact, its asymptotic power is approximately that of the oracle threshold algorithm in Definition~\ref{defn_oracle}. 
However, this improvement comes at the cost of increased computational complexity, namely $\Theta(pL)$, where $L$ denotes the cost of solving a single regression problem. 
In this section, we show how the approximation formula in Section~\ref{sec_approx} can be used to reduce this complexity by a factor of $p$.

In the local knockoff method (See Algorithm~\ref{alg1}),
the exchangeability of $(A,\tilde{A})$ in the knockoff filter is weakened to the conditional exchangeability of columns $(A_{:j},\tilde{A}_{:j})|A_{:\sj}$, for each $j=1,\dots,p$.
Then for each $j$, one regression $Y$ on $(A_{:j},A_{:,\sj})$ and on $(\tilde{A}_{:j},A_{:,\sj})$,
thus generating a pair of test statistics which have the same distribution if $j\in H_0$.
To see why the algorithm controls the FDR, note that 
$|\{j\colon |\hat{\alpha}_j^U|>t\}|$ is the number of selected variables, 
whereas
$|\{j\colon |\hat{\gamma}_j|>t\}|\ge |\{j\in H_0\colon |\hat{\gamma}_j|>t\}|$ approximately controls the number of false discoveries.

The two debiased estimators $\hat{\alpha}^u$ and $\hat{\alpha}^U$ in \eqref{eq_betau}
and 
\eqref{eq_alpha_U} have similar asymptotic behaviors under sufficient regularity conditions.
We shall use $\hat{\alpha}^U$ since it arises more naturally in the derivations of the update formula and also appears to induce smaller error in numerical experiments.
To analyze the computational complexity, note that 
\begin{itemize}
\item Although the definition of $\hat{\alpha}^U$ may appear to require $O(p^4)$ computation time since each $P_j$ requires $O(p^3)$ computation time directly from the formula of the projection matrix,
we can actually compute each $P_j$ in $O(p^2)$ time by the rank one update formula and hence  $\hat{\alpha}^U$ in $O(p^3)$ time (see  Section~\ref{sec_compute}).
Thus, the computation time for $\hat{\alpha}^U$ is the same order as obtaining $\hat{\gamma}$ using the approximate update formula.
\item In the preprocessing step, the conditional means $[\mu_{:1},\dots,\mu_{:p}]$ can be calculated in $O(p^3)$ time, if 
it can be approximated using the linear estimator
$-A[\Theta-\diag\diag \Theta]\cdot (\diag\diag \Theta)^{-2}$, 
where $\Theta$ denotes the precision matrix.
Otherwise, we follow the practice in \cite{liu2022fast}
 by regressing each $A_{:j}$ on $A_{:\sj}$.
\end{itemize}
In summary, the computational complexity of Algorithm~\ref{alg1} is $O(p^3 + L)$, 
which becomes $O(p^3)$ when $L = O(p^3)$ (e.g., when solving the Lasso via least angle regression \citep{hastie2009elements}). 
This matches the computational complexity of the knockoff filter.

\begin{algorithm}[ht]
\caption{Local Knockoff Filter (Debiased statistics; see footnote for other statistics)}
\label{alg1}
\begin{flushleft}
\textbf{Input:} 
$A\in\mathbb{R}^{n\times p}$, $Y\in\mathbb{R}^n$,
$\lambda>0$, FDR threshold $q$.
Assume known $\mu_{:1},\dots,\mu_{:p}$.
\end{flushleft}
\begin{algorithmic}
\State Compute the debiased Lasso solution $\hat{\alpha}^U$ for $(A,Y)$ (using Algorithm~\ref{algo2} for fast computation).
\State Initialize $\hat{\gamma}
\in \mathbb{R}^p$.
\For{$1\le j\le p$} 
\State Sample $X$ from the distribution of $A_{:j}|A_{:\sj}$, independently of $A_{:j}$ and $Y$.
\State Construct $B\in\mathbb{R}^{n\times p}$ where $B_{:\sj}=A_{:\sj}$ and $B_{:j}=X$.
\State Compute the debiased Lasso solution $\hat{\beta}^U$ for $(B,Y)$  (possibly approximately using \eqref{e_approx_na}).
\State $\hat{\gamma}_j\gets \hat{\beta}^U_j$.
\EndFor
\State Choose $T$ as the smallest $t$ such that 
$
\frac{|\{j\colon |\hat{\gamma}_j|>t\}|}{|\{j\colon |\hat{\alpha}_j^U|>t\}|}
\le q.
$
\end{algorithmic}
\begin{flushleft}
\textbf{Output:} Selected set of variables is $\{j\colon |\hat{\alpha}_j^U|>T\}$.
\\
* It is possible to use to other statistics. For the Lasso coefficients, we replace $\hat{\alpha}^U_j$ and $\hat{\beta}^U_j$ by $\hat{\alpha}_j$ and $\hat{\beta}_j$. 
We can also replace $\hat{\alpha}^U_j$ and $\hat{\beta}^U_j$ by 
$t(j, A, Y)$ in \eqref{e_tja}  and 
$t_j$  in \eqref{e_131}
if the condition \eqref{e_lb} is not satisfied.
\end{flushleft}
\end{algorithm}

We now show that the local knockoff filter guarantees FDR control under certain asymptotic assumptions.
In order for the proof to proceed smoothly, 
we introduce
\begin{align}
T(\epsilon):=\inf\left\{t\colon
\frac{|\{j\colon |\hat{\gamma}_j|>t-\epsilon\}|}{|\{j\colon |\hat{\alpha}_j^U|>t\}|}
\le q
\right\}.
\label{e_te}
\end{align}
for any given $\epsilon\in(0,1)$.
Note that $T(0)=T$ in
Algorithm~\ref{alg1},
and the numerator in \eqref{e_te} is an overestimate of the number of false discoveries when $\epsilon>0$.
In practical applications though, 
it may not be necessary to use $\epsilon>0$,
if the empirical measure of $(\hat{\gamma}_j)$ is not concentrated around one point (consider for example, the setting where the Gaussian limit property \citep{javanmard2014hypothesis} is true, so that the empirical measure is a Gaussian-smoothed density).

\begin{assump}\label{assump_2}
Consider a sequence of inputs to Algorithm~\ref{alg1} indexed by $n$, where $Y=A\alpha+w$, $\alpha$ is deterministic, and $A$ and $w$ are independent.
Moreover assume that
\begin{enumerate}
\item \label{l_1} $\liminf_{n\to \infty}\frac1{p}\mathbb{E}[|\{j\colon |\hat{\alpha}^U_j|>T(\epsilon)\}|]
>0$.
\item \label{l_2} The Levy-Prokhorov distance
between the empirical measure of $(\hat{\alpha}^U_j,\alpha_j)_{j=1}^p$,
denoted $\hat{P}_{\hat{\alpha}^U\alpha}$,
and its mean,
$P_{\hat{\alpha}^U\alpha}$,
converges to 0 in probability. 
\item \label{l_3} Let $\hat{\gamma}_j$ and $\tilde{\gamma}_j$ be the debiased estimator and its approximation for the $j$-th knockoff, i.e.\ the left and right sides of \eqref{e_approxj}.
The Levy-Prokhorov distance between the  empirical distributions $\hat{P}_{\hat{\gamma}\alpha}$ and  $\hat{P}_{\tilde{\gamma}\alpha}$
converges to 0 in probability.
This implies that the Levy-Prokhorov distance between their expectations, 
$P_{\hat{\gamma}\alpha}$ and  $P_{\tilde{\gamma}\alpha}$,
converges to 0.
\item \label{l_4} $\hat{P}_{\tilde{\gamma}\alpha}$ converges to $P_{\tilde{\gamma}\alpha}$ in probability.
\end{enumerate}
\end{assump}

\begin{rem}
It is possible to prove the convergence of the empirical measures (Assumption~\ref{assump_2} part~\ref{l_2} and \ref{l_4}) under more explicit conditions.
For example, if the distributions of the row of the design matrix and the noise satisfy the Poincare inequality, 
we can control the variance of $\int fd\hat{P}_{\tilde{\gamma}\alpha}$ for any Lipschitz $f$ by a gradient calculation
(see  \cite{bobkov2010concentration}).
We omit the details here since concentration is expected to hold in broader settings (for example, when the distribution of the row vectors have disconnected support, the Poincare inequality fails, but the concentration of the empirical distribution may still be true). 
\end{rem}

\begin{rem}
It is possible to justify Assumption~\ref{assump_2} part~\ref{l_3}
by applying Theorem~\ref{thm_approx2}, Theorem~\ref{thm12},
or 
Theorem~\ref{thm_lbdd},
depending on the distributional assumptions or the choice of the test statistics.
For example, Theorem~\ref{thm12} implies
Corollary~\ref{cor_lp},
which in turn implies  $\mathbb{E}[\pi(\hat{P}_{\hat{\gamma}\alpha},\hat{P}_{\tilde{\gamma}\alpha})]=O(n^{-1/18})$.
\end{rem}

The following consequence of Algorithm~\ref{alg1} is rather direct:
\begin{thm}\label{thm14}
Fix any $\epsilon>0$,
and consider Algorithm~\ref{alg1} with nominal FDR $q$ and with $T$ replaced by $T(\epsilon)$.
Under Assumption~\ref{assump_2},
we have
\begin{align}
\lim_{n\to\infty}\fdr
\le q.
\end{align}
\end{thm}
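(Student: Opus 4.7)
Let $V:=|\{j\in H_0\colon|\hat{\alpha}^U_j|>T(\epsilon)\}|$ and $R:=|\{j\colon|\hat{\alpha}^U_j|>T(\epsilon)\}|$, so $\fdr=\mathbb{E}[V/R]$ (with $0/0:=0$). The plan rests on a null exchangeability identity combined with the empirical-measure convergence from Assumption~\ref{assump_2}. For $j\in H_0$ we have $\alpha_j=0$, so $Y=A_{:\sj}\alpha_{\sj}+w$ does not involve $A_{:j}$; and by construction $A_{:j}$ and $B^{(j)}_{:j}$ are two i.i.d.\ draws from $P_{A_{:j}|A_{:\sj}}$ independent of $w$. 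Swapping these columns leaves the joint distribution of the algorithm's input invariant, yielding $\hat{\alpha}^U_j\stackrel{d}{=}\hat{\gamma}_j$ for every $j\in H_0$. By linearity of expectation,
\begin{align}
\mathbb{E}\bigl[\hat{P}_{\hat{\alpha}^U\alpha}(\cdot\times\{0\})\bigr]
=\mathbb{E}\bigl[\hat{P}_{\hat{\gamma}\alpha}(\cdot\times\{0\})\bigr],
\end{align}
and sending $n\to\infty$ via parts~\ref{l_2}--\ref{l_4} of Assumption~\ref{assump_2} yields the population null identity $P_{\hat{\alpha}^U\alpha}(\cdot\times\{0\})=P_{\tilde{\gamma}\alpha}(\cdot\times\{0\})$.

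Next I would run a Levy--Prokhorov chain. Fix $\delta\in(0,\epsilon/2)$; parts~\ref{l_2}--\ref{l_4} of Assumption~\ref{assump_2} plus the triangle inequality provide an event $E_n$ of probability $1-o(1)$ on which $\pi(\hat{P}_{\hat{\alpha}^U\alpha},P_{\hat{\alpha}^U\alpha})\le\delta$ and $\pi(\hat{P}_{\hat{\gamma}\alpha},P_{\tilde{\gamma}\alpha})\le\delta$. On $E_n$,
\begin{align}
\frac{V}{p}
&=\hat{P}_{\hat{\alpha}^U\alpha}(|x|>T(\epsilon),\,y=0) \nonumber\\
&\le P_{\hat{\alpha}^U\alpha}(|x|>T(\epsilon)-\delta,\,y=0)+\delta+\eta(\delta) \nonumber\\
&=P_{\tilde{\gamma}\alpha}(|x|>T(\epsilon)-\delta,\,y=0)+\delta+\eta(\delta) \nonumber\\
&\le \hat{P}_{\hat{\gamma}\alpha}(|x|>T(\epsilon)-2\delta)+2\delta+\eta(\delta) \nonumber\\
&\le \hat{P}_{\hat{\gamma}\alpha}(|x|>T(\epsilon)-\epsilon)+2\delta+\eta(\delta)
\;\le\; q\,\frac{R}{p}+2\delta+\eta(\delta),
\end{align}
where line~2 applies Levy--Prokhorov closeness together with a null-slice correction $\eta(\delta):=P_{\hat{\alpha}^U\alpha}(0<|y|<\delta)$ (which vanishes as $\delta\to 0$ under mild regularity of the non-null effect-size distribution), line~3 uses the population null identity from above, line~4 is Levy--Prokhorov back to $\hat{P}_{\hat{\gamma}\alpha}$, line~5 uses $2\delta\le\epsilon$, and the last inequality is the defining property of $T(\epsilon)$.

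To conclude, Assumption~\ref{assump_2}(\ref{l_1}) ensures that $R/p\ge c$ on $E_n$ for some fixed $c>0$, whence $V/R\le q+(2\delta+\eta(\delta))/c$. Since $V/R\le 1$ deterministically and $\mathbb{P}[E_n^c]=o(1)$, we obtain $\fdr\le q+(2\delta+\eta(\delta))/c+o(1)$; letting $n\to\infty$ and then $\delta\to 0$ gives $\limsup_{n\to\infty}\fdr\le q$. The main obstacle is the uniform handling of the random threshold $T(\epsilon)$: because $T(\epsilon)$ is a nonlinear functional of both $\hat{P}_{\hat{\alpha}^U\alpha}$ and $\hat{P}_{\hat{\gamma}\alpha}$, one must exploit the uniformity of Levy--Prokhorov bounds over Borel sets in order to propagate the empirical approximations through it; the $\epsilon$ slack built into the definition of $T(\epsilon)$ is exactly what absorbs the cumulative $O(\delta)$ threshold shifts in the chain. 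A secondary technicality is the treatment of the null slice $\{y=0\}$ under Levy--Prokhorov approximations, which is why the regularity condition $\eta(\delta)\to 0$ appears.
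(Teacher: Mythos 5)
Your proof takes essentially the same route as the paper: a Levy--Prokhorov chain $\hat{P}_{\hat{\alpha}^U\alpha}\to P_{\hat{\alpha}^U\alpha}\to P_{\tilde{\gamma}\alpha}\to\hat{P}_{\tilde{\gamma}\alpha}$ on the null slice, with the built-in $\epsilon$ slack in $T(\epsilon)$ absorbing the accumulated threshold shifts, and Assumption~\ref{assump_2}(\ref{l_1}) handling the denominator. You are in fact a bit more careful than the paper in one respect: the paper's chain perturbs only the $x$-coordinate when inflating sets under the Levy--Prokhorov metric, silently ignoring that the $\epsilon$-neighborhood of $(T,\infty)\times\{0\}$ also picks up mass at small nonzero $\alpha_j$; your explicit $\eta(\delta):=P_{\hat{\alpha}^U\alpha}(0<|y|<\delta)$ correction (and the accompanying regularity requirement that $\eta(\delta)\to 0$) makes this step honest, whereas the paper implicitly assumes the nonzero entries of $\alpha$ are bounded away from zero.
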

Proof can be found in Section~\ref{sec_pvar}.

\begin{rem}\label{rem_pw}
If $\epsilon$ is small and if $\hat{P}_{\tilde{\gamma}\alpha}[(T(\epsilon)-\epsilon,\infty)\times \{0\}]$ is close to $\hat{P}_{\tilde{\gamma}}[(T(\epsilon)-\epsilon,\infty)]$ 
(which is the case if most hypotheses are null), 
then the bounds in the proof are also essentially tight,
which indicates that $T(\epsilon)$ is not selected too conservatively and so the algorithm does not lose too much power compared to the oracle threshold algorithm. 
\end{rem}

\subsection{Fast conditional randomization test}
As discussed in Section~\ref{sec_intro},
the complexity of exactly solving CRT is $O(KpL)$,
where $L$ denotes the complexity of solving lasso.
In this section, we adopt the approximate update formula  \eqref{e_approx_na} to compute $\hat{\gamma}^b_j$,
which we call approx-CRT-db;
see Algorithm~\ref{alg_crt}.
Following the same argument as in Theorem~\ref{thm14}, 
approx-CRT-db still guarantees asymptotic FDR control.
Note that computing $Y-A\hat{\alpha}$ has complexity $O(L)$, 
and then computing the product $\check{B}_{:j}^{\top}(Y-A\hat{\alpha})$  takes time $O(n)=O(p)$ in the proportion regime.
This implies that running CRT approximately takes only $O(L+p^2K)$ time.
As discussed in Section~\ref{sec_resample}, approx-CRT-db is asymptotically equivalent to a version of distilled CRT in \cite{liu2022fast}, but exactly implementing the latter takes $O(Lp+p^2K)$ times.

\begin{algorithm}[ht]
\caption{Conditional Randomization Test (CRT)}
\label{alg_crt}
\begin{flushleft}
\textbf{Input:} 
$A\in\mathbb{R}^{n\times p}$, $Y\in\mathbb{R}^n$,
$\lambda>0$, FDR threshold $q$,
number of repetitions $K\in\{1,2,\dots\}$.
Assume known $\mu_{:1},\dots,\mu_{:p}$,
and a conditional sampling oracle.
\end{flushleft}
\begin{algorithmic}
\State Compute the debiased Lasso solution $\hat{\alpha}^U$ for $(A,Y)$.
\For{$1\le j\le p$} 
\For{$1\le b\le K$} 
\State Sample $X$ from the distribution of $A_{:j}|A_{:\sj}$, independently of $A_{:j}$ and $Y$.
\State Construct $B\in\mathbb{R}^{n\times p}$ where $B_{:\sj}=A_{:\sj}$ and $B_{:j}=X$.
\State Compute the debiased Lasso solution $\hat{\beta}^U_j$ for $(B,Y)$,
and set $\hat{\gamma}^b_j\gets \hat{\beta}^U_j$.
\EndFor
\State Set
$
p_j
:=\frac1{K+1}
\left(
1+\sum_{b=1}^K
1\{|\hat{\alpha}^U_j|
\le
|\hat{\gamma}^b_j|
\}
\right).
$
\EndFor
\end{algorithmic}
\begin{flushleft}
\textbf{Output:} Select a set of variables by feeding $(p_j)_{j=1}^p$ to the Benjamini-Hochberg procedure at level $q$.
\end{flushleft}
\end{algorithm}

\section{Conclusion and future work}
\label{sec_conc}
For a given design matrix and observation, 
we derived a formula for updating a debiased Lasso coefficient under perturbation in one column,
and upper bounded the nonasymptotic error (Theorem~\ref{thm_error}).
Then by general concentration and anti-concentration machinery, 
vanishing approximation error in all but a vanishing fraction of  coordinates was established under mild assumptions
(Theorem~\ref{thm12}).
The heart of the argument lies in establishing a stability result for the signs of the Lasso coefficients,
which implies that a heuristic Taylor expansion argument can be used for updating the debiasing Lasso coefficients.
Interestingly, updating the Lasso coefficients (without debiasing) requires much stronger assumptions (Theorem~\ref{thm_approx2}).
As a consequence, several FDR control algorithms based on feature resampling can be implemented much faster with asymptotically zero impact,
if debiased Lasso coefficients are used as the test statistics.

We expect that the asymptotic error control in Theorem~\ref{thm12} can be extended to even more general matrix classes. 
In the variable selection literature, 
a common assumption for the design matrix is the \emph{factor model} \citep{fan2020ipad}.
We expect that Theorem~\ref{thm12} can be extended to the setting of $A=QU$, where $Q\in\mathbb{R}^{n\times r}$ is  a deterministic matrix with bounded singular values, and $U$ is a random matrix from the class of Definition~\ref{defn_param}.
Furthermore, the proof technique for Theorem~\ref{thm12} should be extendable beyond $\ell_1$ penalties.

More broadly, we expect that some of our stability type analysis can be adapted to other related problems.
For example, 
\emph{algorithmic stability} \citep{bousquet2000algorithmic} is defined as the stability of a function of the training data when one data point is removed,
which implies desirable generalization properties and predictive inference guarantees \citep{zrnic2023post,kim2023black}.
Our approximation error bounds for debiased Lasso based on the number of sign changes may be adapted to establish algorithmic stability or differential privacy guarantees.
Furthermore, while the original definition of the debiased coefficients may be non-differentiable in $(A,Y)$, its approximation formula is piecewise differentiable.
This suggests the possibility of establishing concentration or Gaussian limit results for the empirical distribution of the debiased Lasso through the leave-one-out type analysis leveraging the Poincare inequality or Stein's method,
under conditions more general than (or at least not covered by) existing approaches based on vector approximate message  passing \citep{li2023spectrum,venkataramanan2022estimation}
or Lindeberg's universality argument \citep{han2023universality}.
 
Finally, the recent work of \cite{bao2023leave} also showcased the power and generality of the leave-one-out approach,
by applying it in deriving nonasymptotic error bounds for the AMP state evolution,
which differs from the previous proofs using the conditioning technique that require rotational invariance.
While \cite{bao2023leave} focused on the case of independent matrix entries, 
it may be that an extension to the vector sub-Gaussian case is possible by combining techniques in our paper.

\appendix
\section{Errors in the projection matrices}
\label{sec_ep}
Recall $G$, $\bar{G}_{\mathcal{A}}$ and  $\bar{G}_{\mathcal{B}}$ defined in Section~\ref{sec_aformula}.
Their inverses may not be well-defined as matrices since the diagonal values may be $+\infty$.
However, the inverses can be defined as linear operators on the column space of $A_{:\sj}$.
That is, we observe that the following map
\begin{align}
a
\mapsto
A_{:\sj}(A_{:\sj}^{\top}
A_{:\sj}
+\diag(a))^{-1}A_{:\sj}^{\top}
\label{e_gmap}
\end{align}
is well-defined from $[0,+\infty]^{p-1}$ to the set of positive semidefinite matrices,
since the minimum eigenvalue of $A_{:\sj}^{\top}
A_{:\sj}
+\diag(a)$ as an operator on the column space of $A_{:\sj}$ is positive and hence admits an inverse.
Details of this continuity proof can be found in Lemma~\ref{lem16}.
Since our final bounds will only depend on $G^{-1}$
through $A_{:\sj}G^{-1}A_{:\sj}^{\top}$ (and similarly for $\bar{G}_{\mathcal{A}}$ and $\bar{G}_{\mathcal{B}}$), 
we can write $G^{-1}$ with the understanding that it is well-defined restricted to the column space of $A_{:\sj}$.
We now show that $\frac1{n}A_{:\sj}G^{-1}A_{:\sj}^{\top}$ is close to the projection matrices $P_{\mathcal{A}}:=\frac1{n}A_{:\sj}\bar{G}_{\mathcal{A}}^{-1}A_{:\sj}^{\top}$ and $P_{\mathcal{B}}:=\frac1{n}A_{:\sj}\bar{G}_{\mathcal{B}}^{-1}A_{:\sj}^{\top}$.
\begin{lem}\label{lem5}
Let $\mathcal{A}:=\{l\neq j\colon \psi^{\alpha}_l=\pm1\}$, 
$\mathcal{B}:=\{l\neq j\colon \psi^{\beta}_l=\pm1\}$, 
and $\mathcal{J}:=\{l\neq j\colon \chi^{\alpha}_l\neq \chi^{\beta}_l\}$.
We have
\begin{align}
-(P_{\mathcal{A}\cup \mathcal{J}}-P_{\mathcal{A}})
\preceq
\frac1{n}A_{:\sj}
(\bar{G}_{\mathcal{A}}^{-1}-G^{-1})
A_{:\sj}^{\top}
\preceq P_{\mathcal{A}}
-P_{\mathcal{A}\setminus \mathcal{J}}
\label{eq135}
\end{align}
where $\preceq$ denotes the ordering of positive semidefinite matrices.
Consequently, 
 \begin{align}
\left[ \frac1{n}A_{:\sj}
(\bar{G}_{\mathcal{A}}^{-1}-G^{-1})
A_{:\sj}^{\top}\right]^2
\preceq P_{\mathcal{A}\cup\mathcal{J}}
-P_{\mathcal{A}\setminus \mathcal{J}}.
\label{e_gag}
\end{align}
We also have 
\begin{align}
\left[ \frac1{n}A_{:\sj}
(\bar{G}_{\mathcal{B}}^{-1}-G^{-1})
A_{:\sj}^{\top}\right]^2
\preceq P_{\mathcal{A}\cup\mathcal{J}}
-P_{\mathcal{A}\setminus \mathcal{J}}.
\end{align}
\end{lem}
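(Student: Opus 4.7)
The plan is to analyze $F(M):=\tfrac{1}{n}A_{:\sj}\bigl(\tfrac{1}{n}A_{:\sj}^{\top}A_{:\sj}+\lambda M\bigr)^{-1}A_{:\sj}^{\top}$ as an antitone map from diagonal matrices $M$ with entries in $[0,+\infty]$ (interpreted via the restricted-subspace convention introduced at the start of this appendix) to PSD operators, with the key identity $F(\bar{\Lambda}_{\mathcal{S}})=P_{\mathcal{S}}$ for every index set $\mathcal{S}$. The first step is a case analysis of $\Lambda_{ll}$ from \eqref{eq_d}: by the sign conventions there, $\Lambda_{ll}=+\infty$ iff $\hat{\alpha}_l=\hat{\beta}_l=0$ iff $l\notin\mathcal{A}\cup\mathcal{J}$; $\Lambda_{ll}=0$ iff $\chi^\alpha_l=\chi^\beta_l\neq 0$ iff $l\in\mathcal{A}\setminus\mathcal{J}$; and $\Lambda_{ll}$ is finite and nonnegative on $\mathcal{J}$ (by monotonicity of the subdifferential of $|\cdot|$). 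Writing $\mathcal{K}:=\mathcal{A}\cup\mathcal{J}$, this gives the entrywise sandwich $\bar{\Lambda}_{\mathcal{K}}\preceq\Lambda\preceq\bar{\Lambda}_{\mathcal{A}\setminus\mathcal{J}}$, so antitonicity of $F$ yields $P_{\mathcal{A}\setminus\mathcal{J}}\preceq F(\Lambda)\preceq P_{\mathcal{K}}$; subtracting from $F(\bar{\Lambda}_{\mathcal{A}})=P_{\mathcal{A}}$ then delivers the two-sided bound \eqref{eq135}.

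The main work is the squared bound \eqref{e_gag}. Let $X:=P_{\mathcal{A}}-F(\Lambda)$. I would show $\mathrm{range}(X)\subseteq\mathrm{range}(P_{\mathcal{K}}-P_{\mathcal{A}\setminus\mathcal{J}})$ by proving $X$ annihilates $\mathrm{span}(A_{:\mathcal{A}\setminus\mathcal{J}})\oplus\mathrm{range}(P_{\mathcal{K}})^{\perp}$. That $X$ vanishes on $\mathrm{range}(P_{\mathcal{K}})^{\perp}$ is immediate since both $P_{\mathcal{A}}$ and $F(\Lambda)$ act through $A_{:\mathcal{K}}$; the key step is $F(\Lambda)A_{:l}=A_{:l}$ for every $l\in\mathcal{A}\setminus\mathcal{J}$. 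For this, set $\Sigma_{\mathcal{K}}:=\tfrac{1}{n}A_{:\mathcal{K}}^{\top}A_{:\mathcal{K}}$ and use $\Lambda_{ll}=0$ to get $(\Sigma_{\mathcal{K}}+\lambda\Lambda_{\mathcal{K}})e_l=\Sigma_{\mathcal{K}}e_l$; hence $F(\Lambda)A_{:l}=A_{:\mathcal{K}}(\Sigma_{\mathcal{K}}+\lambda\Lambda_{\mathcal{K}})^{-1}\Sigma_{\mathcal{K}}e_l=A_{:\mathcal{K}}e_l=A_{:l}=P_{\mathcal{A}}A_{:l}$, and linearity extends to all of $\mathrm{span}(A_{:\mathcal{A}\setminus\mathcal{J}})$.

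Once $X$ is known to act only on $\mathrm{range}(P_{\mathcal{K}}-P_{\mathcal{A}\setminus\mathcal{J}})$, the bound $\|X\|_{\rm op}\le 1$ (from $0\preceq P_{\mathcal{A}},F(\Lambda)\preceq I$) gives $X^2\preceq(P_{\mathcal{K}}-P_{\mathcal{A}\setminus\mathcal{J}})X^2(P_{\mathcal{K}}-P_{\mathcal{A}\setminus\mathcal{J}})\preceq P_{\mathcal{K}}-P_{\mathcal{A}\setminus\mathcal{J}}$, which is \eqref{e_gag}. The $\mathcal{B}$-version is entirely symmetric: $\Lambda$ is invariant under swapping $(\alpha,\beta)$, and $\mathcal{B}\setminus\mathcal{J}=\mathcal{A}\setminus\mathcal{J}$, $\mathcal{B}\cup\mathcal{J}=\mathcal{A}\cup\mathcal{J}$, so replacing $\bar{\Lambda}_{\mathcal{A}}$ by $\bar{\Lambda}_{\mathcal{B}}$ preserves every step above.

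The main technical obstacle is not any hard estimate but careful bookkeeping around the $+\infty$ diagonal entries of $\Lambda$, $\bar{\Lambda}_{\mathcal{A}}$, and $\bar{\Lambda}_{\mathcal{B}}$: one must interpret $F(\cdot)$ consistently so that entrywise comparisons in $[0,+\infty]$ translate into correct operator inequalities on the restricted subspaces, and in particular so that the identity $F(\Lambda)A_{:l}=A_{:l}$ for $l\in\mathcal{A}\setminus\mathcal{J}$ is legitimate (using that $l\in\mathcal{K}$, so the effective operator is the usual inverse of a genuinely positive definite submatrix).
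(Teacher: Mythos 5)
Your derivation of \eqref{eq135} is the same as the paper's: you sandwich $\Lambda$ entrywise between $\bar{\Lambda}_{\mathcal{A}\cup\mathcal{J}}$ and $\bar{\Lambda}_{\mathcal{A}\setminus\mathcal{J}}$ and use antitonicity of the resolvent map (the paper's $\Lambda^{\pm}$ are exactly your two extremes). Two small slips in the bookkeeping there do not affect the argument: the assertion ``$\hat{\alpha}_l=\hat{\beta}_l=0$ iff $l\notin\mathcal{A}\cup\mathcal{J}$'' is not an iff (one can have $\psi^\alpha_l=\pm1$ with $\hat{\alpha}_l=0$), and $\Lambda_{ll}$ need not be finite on $\mathcal{J}$ (e.g.\ $\chi^\alpha_l\neq 0=\chi^\beta_l$ with $\hat{\alpha}_l=\hat{\beta}_l=0$ gives $+\infty$); what you actually use is only that $\Lambda_{ll}=+\infty$ off $\mathcal{A}\cup\mathcal{J}$, that $\Lambda_{ll}=0$ on $\mathcal{A}\setminus\mathcal{J}$, and that $\Lambda_{ll}\in[0,+\infty]$ everywhere, all of which are correct.

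For \eqref{e_gag} your route differs structurally from the paper's. The paper first relaxes \eqref{eq135} to the symmetric sandwich $-Q\preceq X\preceq Q$ with $Q:=P_{\mathcal{A}\cup\mathcal{J}}-P_{\mathcal{A}\setminus\mathcal{J}}$ and then invokes the standalone Lemma~\ref{lem_proj} (whose proof is a block decomposition showing $X$ is supported on $\mathrm{range}(Q)$ and has norm at most $1$). You instead prove $\mathrm{range}(X)\subseteq\mathrm{range}(Q)$ directly, via the clean identity $F(\Lambda)A_{:l}=A_{:l}$ for $l\in\mathcal{A}\setminus\mathcal{J}$ (from $\Lambda_{ll}=0$) together with the observation that both operators act through $A_{:\mathcal{K}}$, and then combine $QXQ=X$ with $\|X\|_{\rm op}\le 1$. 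This is a correct, more self-contained finish: it bypasses Lemma~\ref{lem_proj} and in fact does not use the two-sided sandwich \eqref{eq135} at all for the squared bound, whereas the paper needs both sides of \eqref{eq135} to feed into Lemma~\ref{lem_proj}. What you trade away is reusability of the abstract lemma; what you gain is an explicit description of why the off-block parts of $X$ vanish. The $\mathcal{B}$-version is handled the same way in both proofs via $\mathcal{B}\cup\mathcal{J}=\mathcal{A}\cup\mathcal{J}$ and $\mathcal{B}\setminus\mathcal{J}=\mathcal{A}\setminus\mathcal{J}$ (and, in your formulation, the symmetry of $\Lambda$ in $(\alpha,\beta)$).
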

\begin{proof}
Recall that $\bar{\Lambda}\in\mathbb{R}^{(p-1)\times (p-1)}$ is the diagonal matrix where $\bar{\Lambda}_{ll}=0$ if $l\in\mathcal{A}$ and $\bar{\Lambda}_{ll}=+\infty$ if otherwise.
Note that 
$\Lambda$ is a diagonal matrix with nonnegative entries where $\Lambda_{ll}=\bar{\Lambda}_{ll}$ unless $l\in\mathcal{J}$.
Define $\Lambda^+$ as the diagonal matrix with $\Lambda^+_{ll}=0$ iff $l\in \mathcal{A}\setminus\mathcal{J}$ and $+\infty$ otherwise. 
Then from the rule of resolving the $\frac{0}{0}$ case in the definition of $\Lambda$, we see that $\Lambda^+$ dominates $\Lambda$, so
\begin{align}
\bar{G}_{\mathcal{A}}^{-1}-G^{-1}
&=\left(\frac1{n}A_{:\setminus j}^{\top}A_{:\setminus j}
+\lambda \bar{\Lambda}
\right)^{-1}
-
\left(\frac1{n}A_{:\setminus j}^{\top}A_{:\setminus j}
+\lambda \Lambda
\right)^{-1}
\nonumber\\
&\preceq
\left(\frac1{n}A_{:\setminus j}^{\top}A_{:\setminus j}
+\lambda \bar{\Lambda}
\right)^{-1}
-
\left(\frac1{n}A_{:\setminus j}^{\top}A_{:\setminus j}
+\lambda \Lambda^+
\right)^{-1}.
\end{align}
Define $\Lambda^-$ as the diagonal matrix with $\Lambda^-_{ll}=0$ iff $l\in \mathcal{A}\cup\mathcal{J}$ and $+\infty$ otherwise. Then we see that $\Lambda^-$ dominated by $\Lambda$, so
\begin{align}
\bar{G}_{\mathcal{A}}^{-1}-G^{-1}
&\succeq
\left(\frac1{n}A_{:\setminus j}^{\top}A_{:\setminus j}
+\lambda \bar{\Lambda}
\right)^{-1}
-
\left(\frac1{n}A_{:\setminus j}^{\top}A_{:\setminus j}
+\lambda \Lambda^-
\right)^{-1}.
\end{align}
Then \eqref{e_gag} follows from Lemma~\ref{lem_proj}.
By the same argument we also have 
\begin{align}
\left[ \frac1{n}A_{:\sj}
(\bar{G}_{\mathcal{B}}^{-1}-G^{-1})
A_{:\sj}^{\top}\right]^2
\preceq 
P_{\mathcal{B}\cup\mathcal{J}}
-P_{\mathcal{B}\setminus \mathcal{J}}
=P_{\mathcal{A}\cup\mathcal{J}}
-P_{\mathcal{A}\setminus \mathcal{J}},
\end{align}
where the last equality can be verified using the definition of $\mathcal{J}$.
\end{proof}

\begin{prop}
\label{prop_err}
For $\Lambda, u, v$ defined in  Section~\ref{sec_aformula}, we have
\begin{align}
A_{:\sj}(\hat{\alpha}_{\setminus j}-\hat{\beta}_{\setminus j})
=A_{:\sj}\left(\frac1{n}A_{:\setminus j}^{\top}A_{:\setminus j}
+
\lambda \Lambda
\right)^{-1}
\left(v\hat{\beta}_j-u\hat{\alpha}_j
\right)
\label{e_err}
\end{align}
where the inverse is defined 
by a continuity argument for \eqref{e_gmap}
(see Lemma~\ref{lem16}).
\end{prop}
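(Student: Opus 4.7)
The plan is to derive the identity directly from the KKT optimality conditions of the two Lasso problems \eqref{eq_alpha}--\eqref{eq_beta}. First, subtracting the normal equations \eqref{eq2_1} and \eqref{eq2_2} (restricted to the coordinates indexed by $\setminus j$) gives
\[
\tfrac{1}{n}A_{:\sj}^{\top}(R - S) = \lambda\bigl(\psi^{\alpha}_{\sj} - \psi^{\beta}_{\sj}\bigr).
\]
Substituting the residual difference $R - S = -A_{:\sj}(\hat{\alpha}_{\sj} - \hat{\beta}_{\sj}) - A_{:j}\hat{\alpha}_j + B_{:j}\hat{\beta}_j$ from \eqref{eqn3} and recognising the definitions of $u$ and $v$, this rearranges to
\[
\tfrac{1}{n}A_{:\sj}^{\top}A_{:\sj}(\hat{\beta}_{\sj} - \hat{\alpha}_{\sj}) + v\hat{\beta}_j - u\hat{\alpha}_j = \lambda\bigl(\psi^{\alpha}_{\sj} - \psi^{\beta}_{\sj}\bigr).
\]

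The crux of the proof is the coordinatewise identity $\psi^{\alpha}_l - \psi^{\beta}_l = \Lambda_{ll}(\hat{\alpha}_l - \hat{\beta}_l)$ for every $l \neq j$, which is how the non-differentiable subgradient information gets absorbed into the linear operator $G$. For coordinates with $\hat{\alpha}_l \neq \hat{\beta}_l$ this is exactly the definition of $\Lambda_{ll}$ in \eqref{eq_d}. For coordinates with $\hat{\alpha}_l = \hat{\beta}_l$ and $\psi^{\alpha}_l = \psi^{\beta}_l$, both sides vanish under the paper's conventions, adopting the reading $(+\infty)\cdot 0 = 0$ when $\Lambda_{ll} = +\infty$ (which is consistent since in that branch the subgradient is strictly interior and forces $\hat{\alpha}_l = \hat{\beta}_l = 0$). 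The remaining degenerate possibility $\hat{\alpha}_l = \hat{\beta}_l = 0$ with $\psi^{\alpha}_l \neq \psi^{\beta}_l$ is non-generic and does not arise for a.e.\ realisation under the Gaussian noise assumption of the paper; I would handle it either by invoking genericity, or equivalently by a perturbation of $\lambda$. Plugging the identity in and rearranging yields
\[
G(\hat{\alpha}_{\sj} - \hat{\beta}_{\sj}) = v\hat{\beta}_j - u\hat{\alpha}_j.
\]

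The final step is to multiply both sides by $A_{:\sj}G^{-1}$. The hard part is that $G$ may carry $+\infty$ diagonal entries (coming from $\Lambda$) and is therefore not invertible as an ordinary matrix. The key observation is that the right-hand side is in fact in the range of $A_{:\sj}^{\top}$, since
\[
v\hat{\beta}_j - u\hat{\alpha}_j = \tfrac{1}{n}A_{:\sj}^{\top}\bigl(B_{:j}\hat{\beta}_j - A_{:j}\hat{\alpha}_j\bigr),
\]
so the object $A_{:\sj}G^{-1}(v\hat{\beta}_j - u\hat{\alpha}_j)$ is unambiguously defined through the well-posed operator map \eqref{e_gmap}. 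To make the inversion rigorous I would use a truncation-and-limit argument: replace each $+\infty$ entry of $\Lambda$ by a finite parameter $M$, note that the displayed equation $G_M(\hat{\alpha}_{\sj} - \hat{\beta}_{\sj}) = v\hat{\beta}_j - u\hat{\alpha}_j$ still holds for every $M$ (since the infinities only multiply zero entries of $\hat{\alpha}_{\sj} - \hat{\beta}_{\sj}$), apply $A_{:\sj}G_M^{-1}$ to both sides, and pass $M \to \infty$ using continuity of \eqref{e_gmap} in $a \in [0,+\infty]^{p-1}$. This recovers the stated identity with $G^{-1}$ interpreted as that limiting operator on the column space of $A_{:\sj}$.
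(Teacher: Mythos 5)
Your proof is correct and takes essentially the same route as the paper's: both derive the linear relation
\[
\tfrac1n A_{:\sj}^\top A_{:\sj}(\hat\alpha_{\sj}-\hat\beta_{\sj}) + \lambda(\psi^\alpha_{\sj}-\psi^\beta_{\sj}) = v\hat\beta_j - u\hat\alpha_j
\]
from the normal equations, fold the subgradient difference into $\Lambda$, and then make the inversion rigorous via a limiting argument that invokes continuity of the map \eqref{e_gmap}. The one technical difference is the choice of regularization: you truncate the $+\infty$ entries of $\Lambda$ to a finite $M$ and pass $M\to\infty$, whereas the paper perturbs the solutions themselves as $(\hat\alpha_{\sj},\psi^\alpha_{\sj})\to(\hat\alpha_{\sj}+\delta,\psi^\alpha_{\sj}+\Delta)$ so that the finite-ratio matrix $\Lambda_{\delta,\Delta}$ is automatically well-defined, then sends $(\delta,\Delta)\to 0$. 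The paper's version has the small advantage that it does not need to first establish the coordinatewise identity $\psi^\alpha_l-\psi^\beta_l=\Lambda_{ll}(\hat\alpha_l-\hat\beta_l)$ (that identity is built into the definition of $\Lambda_{\delta,\Delta}$), and so it sidesteps the degenerate case $\hat\alpha_l=\hat\beta_l=0$, $\psi^\alpha_l\neq\psi^\beta_l$ that you must handle by genericity; but in practice both approaches must address that case with the same sort of perturbation argument, so the gain is cosmetic. One minor note: your version of the relation has $v\hat\beta_j-u\hat\alpha_j$ on the right, while the paper's statement and proof carry an extra $\tfrac1{\sqrt n}$ factor; your version is the one consistent with \eqref{eq55}, \eqref{eq59}, and \eqref{eq_69}, so the $\sqrt n$ in the paper appears to be a typo.
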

\begin{proof}
Note that from the normal equations in Section~\ref{sec_aformula},
\begin{align}
\frac1{n}A_{:\setminus j}^{\top}A_{:\setminus j}
(\hat{\alpha}_{\setminus j}-\hat{\beta}_{\setminus j})
+\lambda(\psi_{\setminus j}^{\alpha}-\psi_{\setminus j}^{\beta})
=
v\hat{\beta}_j-u\hat{\alpha}_j.
\end{align}
Therefore for any $\delta,\Delta\in\mathbb{R}^{p-1}$,
\begin{align}
\frac1{n}A_{:\setminus j}^{\top}A_{:\setminus j}
(\hat{\alpha}_{\setminus j}-\hat{\beta}_{\setminus j}+\delta)
+\lambda(\psi_{\setminus j}^{\alpha}-\psi_{\setminus j}^{\beta}+\Delta)
=
v\hat{\beta}_j-u\hat{\alpha}_j
+\frac1{n}A_{:\setminus j}^{\top}A_{:\setminus j}\delta+\lambda\Delta.
\end{align}
Then defining $\Lambda_{\delta,\Delta}$ 
as the diagonal matrix whose diagonal values are $\frac{\psi^{\alpha}_l-\psi^{\beta}_l+\Delta_l}{\hat{\alpha}_l-\hat{\beta}_l+\delta_l}$, 
$l\neq j$,
we have
\begin{align}
\hat{\alpha}_{\setminus j}-\hat{\beta}_{\setminus j}+\delta
=\left(\frac1{n}A_{:\setminus j}^{\top}A_{:\setminus j}
+
\lambda \Lambda_{\delta,\Delta}
\right)^{-1}
\left(v\hat{\beta}_j-u\hat{\alpha}_j
+\frac1{n}A_{:\setminus j}^{\top}A_{:\setminus j}\delta+\lambda\Delta\right)
\end{align}
whenever $\delta,\Delta$ are such that the above matrix inverse is defined.
Now we can take a particular vanishing sequence of  $(\delta,\Delta)$ such that the coordinates of $\Lambda_{\delta,\Delta}$ converges to coordinates of $\Lambda$ on the extended real line.
Assuming without loss of generality that $A_{:\sj}$ has nonzero columns, 
we can also assume that  $\Delta\in\col(A_{:\sj}^{\top})$.  
We can show that the map \eqref{e_gmap} is continuous (see Lemma~\ref{lem16}), 
so \eqref{e_err} follows.
In fact, from this proof we can see that \eqref{e_err} remains valid when $\frac{0}{0}$ in the definition of $\Lambda$ is set to \emph{any} value in $[0,\infty]$.

\end{proof}


\begin{lem}\label{lem_proj}
Suppose that $P$ is the projection matrix onto a subspace in $\mathbb{R}^n$. 
If $A\in\mathbb{R}^n$ is a symmetric matrix satisfying $-P\preceq A \preceq P$, then $A^2\preceq P$.
\end{lem}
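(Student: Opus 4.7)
\textbf{Proof plan for Lemma~\ref{lem_proj}.}

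The plan is to reduce the problem to the range of $P$. Let $V := \mathrm{range}(P)$, so $V^\perp = \ker(P)$, and write any $x \in \mathbb{R}^n$ as $x = Px + (I-P)x = u + v$ with $u \in V$, $v \in V^\perp$. The first goal is to show that $A$ kills $V^\perp$ and maps $V$ into itself; once we have that, the squared inequality follows easily.

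To establish the structure of $A$, I would take arbitrary $u \in V$ and $v \in V^\perp$, replace $v$ by $tv$ for $t \in \mathbb{R}$, and apply $-P \preceq A \preceq P$ to $x_t := u + tv$. Since $Pv = 0$, the right hand side $x_t^\top P x_t = u^\top u$ is independent of $t$, while the left hand side expands as
\begin{equation*}
x_t^\top A x_t = u^\top A u + 2t\, u^\top A v + t^2 v^\top A v.
\end{equation*}
For this quadratic in $t$ to be bounded in absolute value by $u^\top u$ for every $t$, the $t^2$ coefficient must vanish, so $v^\top A v = 0$ for all $v \in V^\perp$. Symmetry of $A$ and polarization then give $A|_{V^\perp} = 0$. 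Boundedness of the remaining linear term forces $u^\top A v = 0$ for all $u \in V$, $v \in V^\perp$, which means $Au \in V$, i.e.\ $A$ preserves $V$. Equivalently, $A = PAP$, and $Ax = APx$ for every $x$.

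Now restrict $A$ to $V$. For any $u \in V$ we have $u^\top P u = u^\top u$, so the sandwich $-P \preceq A \preceq P$ becomes $-I_V \preceq A|_V \preceq I_V$. Because $A|_V$ is symmetric, its spectrum lies in $[-1,1]$ and hence $(A|_V)^2 \preceq I_V$. Combining this with $Ax = APx$ and the symmetry of $A$,
\begin{equation*}
x^\top A^2 x = \|Ax\|_2^2 = \|APx\|_2^2 = (Px)^\top (A|_V)^2 (Px) \le (Px)^\top (Px) = x^\top P x,
\end{equation*}
which is the desired bound $A^2 \preceq P$.

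The only delicate step is the first one, extracting $A|_{V^\perp} = 0$ and the block-diagonal structure from the operator inequality; everything else is bookkeeping. Once the block structure is in hand the square step reduces to the elementary fact that a symmetric contraction has square bounded by the identity.
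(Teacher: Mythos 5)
Your proof is correct and is essentially the same argument as in the paper: the paper chooses coordinates so that $P = \begin{pmatrix} I_r & 0 \\ 0 & 0\end{pmatrix}$ and shows the off-diagonal and lower-right blocks of $A$ vanish, while you phrase the identical decomposition in coordinate-free language via $V = \mathrm{range}(P)$ and $V^\perp$; the final contraction step $(A|_V)^2 \preceq I_V$ is the same in both.
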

\begin{proof}
Without loss of generality, assume that $P=
\begin{pmatrix}
I_{r\times r} &0\\
0 & 0
\end{pmatrix}$, for some $r\le n$. 
If $A=\begin{pmatrix}
A_{11} &A_{12}\\
A_{21} & A_{22}
\end{pmatrix}$, where $A_{11}\in\mathbb{R}^{r\times r}$,
we claim that $A_{12}$, $A_{21}$, $A_{22}$ are zero.
Indeed, let $u\in\mathbb{R}^r$ and $v\in\mathbb{R}^{n-r}$ be arbitrary row vectors. We have 
\begin{align}
(u,v)A(u,v)^{\top}
=uA_{11}u^{\top}
+2uA_{12}v^{\top}
+vA_{22}v^{\top}
\end{align}
which, by our assumption, is bounded between $\|u\|_2^2$ and $-\|u\|_2^2$ for all $u$ and $v$, so it follows that $A_{12}$, $A_{21}$, $A_{22}$ are zero. Moreover $A_{11}$ must have all eigenvalues between $-1$ and 1, so $A^2\preceq P$ follows.
\end{proof}

\section{Sign stability}
\subsection{Control of change of residual}
\label{sec_s21}
The residual change $R-S$ is easier to control than $\hat{\alpha}-\hat{\beta}$.
From the normal equations in Section~\ref{sec_aformula}, we have
\begin{align}
\frac1{n}A_{:\sj}^{\top}(S-R)+\lambda \Lambda(\hat{\alpha}_{\sj}-\hat{\beta}_{\sj})=0.
\label{e110}
\end{align}
If $0\cdot\infty$ is encountered in \eqref{e110},
we can add perturbations as in \eqref{prop_err},
and pass to a limit in the end, so the end result still holds.
Solving $\hat{\alpha}_{\sj}-\hat{\beta}_{\sj}$ and plugging it in the normal equations, we have
\begin{align}
S-R=-\frac1{\lambda n}A_{:\sj}\Lambda^{-1}A_{:\sj}^{\top}(S-R)
+A_{:j}\hat{\alpha}_j-B_{:j}\hat{\beta}_j.
\label{eq85}
\end{align}
Therefore
\begin{align}
S-R=\left[I+\frac1{\lambda n}A_{:\sj}\Lambda^{-1}A_{:\sj}^{\top}\right]^{-1}(A_{:j}\hat{\alpha}_j-B_{:j}\hat{\beta}_j).
\label{eq86}
\end{align}
Since the eigenvalues of $\left[I+\frac1{\lambda n}A_{:\sj}\Lambda^{-1}A_{:\sj}^{\top}\right]^{-1}$ are upper bounded by 1, we have 
\begin{align}
\|S-R\|_2\le \left\|A_{:j}\hat{\alpha}_j-B_{:j}\hat{\beta}_j\right\|_2.
\label{eq82}
\end{align}

\subsection{Control of $\psi^{\alpha}$ near the edge}
In this section we prove sufficient conditions under which $\psi_m(Y)\in (-1,-1+\epsilon]\cup[1-\epsilon,1)$ with small probability whenever $\epsilon>0$ is small.
In this section,
we assume that $A$ is an arbitrary fixed matrix,
$Y$ is a Gaussian random vector with a certain mean, 
and define $\psi()$ and $\chi()$ as the subgradient and the signs of the Lasso solution as functions of $Y$, using the definitions in Section~\ref{sec_intro}.

\begin{lem}\label{lem7}
Let $\epsilon,\delta_0\in(0,1)$, $k,m\in\{1,2,\dots,p\}$,
and $\kappa,\Gamma\in(0,\infty)$.
Let $A$ be deterministic, and $Y\sim \mathcal{N}(\mu,n\sigma^2I_n)$ for some $\mu\in\mathbb{R}^n$.
Let $\Omega_1$ be the set of $y$ such that $|\{l\colon \chi_l(y)\neq 0\}|\le k$.
Let $\Omega_2$ be the set of $y$ satisfying
$
\frac1{n}\left<y-\mu,A_{:m}\right>\ge -\Gamma.
$
Now suppose that 
\begin{align}
\frac1{n}\inf_PA_{:m}^{\top}(I-P)A_{:m}\ge \kappa
\label{e_cond1}
\end{align}
where the infimum is over projection $P$ onto the span of $k$ columns of $A$ excluding the $m$-th column;
and that
\begin{align}
\mathbb{P}\left[Y+ U\frac{2\epsilon\lambda}{\kappa}A_{:m}\notin\Omega_1\right]
\vee 
\mathbb{P}\left[Y- U\frac{2\epsilon\lambda}{\kappa}A_{:m}\notin\Omega_1\right]
\le \delta_0,
\label{e_cond2}
\end{align}
where $U$ is a uniform random variable on $[0,1]$ independent of $Y$;
and that
\begin{align}
\frac1{n}\|A_{:m}\|^2\le 2(\ln 3-1)\Gamma^2\sigma^2;
\label{e_cond3}
\\
P_Y[\Omega_2^c]\le \delta_0.
\label{e_cond4}
\end{align}
Then
\begin{align}
\mathbb{P}[1-|\psi_m(Y)|\in(0,\epsilon]]\le 
\frac{12\epsilon\lambda}{\kappa}\Gamma 
+
6\delta_0.
\label{eq87}
\end{align}
\end{lem}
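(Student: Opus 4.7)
The plan is to exploit the piecewise-affine structure of $\psi_m$ along the direction $A_{:m}$ and argue that the slab $\{\psi_m \in [1-\epsilon, 1)\}$ is squeezed into a narrow region whose Gaussian measure is controllable. By negating $A_{:m}$ one may replace $\psi_m$ with $-\psi_m$, so symmetry reduces the problem to bounding $\mathbb{P}[\psi_m(Y) \in [1-\epsilon, 1)]$, call it $E_+$, by half the claimed quantity.

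The first key ingredient is local monotonicity of $\psi_m$ in the $A_{:m}$ direction. On any open region of $Y$-space where the sign pattern $\chi$ is locally constant with active set $\mathcal{S}$ satisfying $|\mathcal{S}| \leq k$ and $m \notin \mathcal{S}$, the KKT conditions give $\hat{\alpha}_{\mathcal{S}}(Y) = (A_{:\mathcal{S}}^{\top} A_{:\mathcal{S}})^{-1}(A_{:\mathcal{S}}^{\top} Y - n\lambda \chi_{\mathcal{S}})$, and hence
\begin{align*}
\psi_m(Y) \;=\; \tfrac{1}{n\lambda}\, A_{:m}^{\top} (I-P_{\mathcal{S}}) Y \,+\, \textrm{const}(\mathcal{S},\chi_{\mathcal{S}}).
\end{align*}
Its slope along $A_{:m}$ is $\tfrac{1}{n\lambda}A_{:m}^{\top} (I-P_{\mathcal{S}})A_{:m} \geq \kappa/\lambda$ by \eqref{e_cond1}. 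Moreover, a brief KKT check shows that once $\psi_m$ reaches $+1$ and $m$ enters the active set, $\psi_m$ remains at $+1$ for further positive perturbation along $A_{:m}$, so the boundary $\{\psi_m = 1\}$ is sticky from the $E_+$ side.

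Next I would disintegrate $Y = Y^{\perp} + s\, A_{:m}/\|A_{:m}\|$ with $Y^{\perp} \perp A_{:m}$ and $s$ a one-dimensional Gaussian of standard deviation $\sigma\sqrt{n}$. Fixing $Y^{\perp}$, on each maximal interval of $s$ with good sparsity ($|\mathcal{S}(s)| \leq k$ and $m \notin \mathcal{S}(s)$), the set of $s$ with $\psi_m \in [1-\epsilon, 1)$ is a single sub-interval of length at most $\epsilon\lambda\|A_{:m}\|/\kappa$ by the slope lower bound. The 1D Gaussian density is at most $1/(\sigma\sqrt{2\pi n})$, so the conditional probability is at most $\epsilon\lambda\|A_{:m}\|/(\kappa\sigma\sqrt{2\pi n})$; bounding $\|A_{:m}\|/\sqrt{n}$ via \eqref{e_cond3} yields a constant multiple of $\epsilon\lambda\Gamma/\kappa$. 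The constant $2(\ln 3 - 1)$ in \eqref{e_cond3} is calibrated so that, on the event $\Omega_2$, the 1D Gaussian density along $A_{:m}$ stays within a fixed multiple of its maximum, legitimizing the uniform density bound over the region of interest; the probability of the complementary event $\Omega_2^c$ is absorbed via \eqref{e_cond4} into a $\delta_0$ additive term.

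The main obstacle is the combinatorics of sign transitions along the 1D slice: the active set $\mathcal{S}(s)$ can change multiple times as $s$ varies, breaking the piecewise-affine picture and potentially creating multiple candidate slabs before the sticky barrier at $\psi_m=1$ is reached. This is where \eqref{e_cond2} is essential: the randomized shift $Y + U\eta A_{:m}$ with $\eta = 2\epsilon\lambda/\kappa$ and $U \sim \textrm{Unif}[0,1]$ encodes, via Fubini in $(U,Y)$, the expected $s$-length on which the sparsity constraint fails somewhere in the perturbation window, and yields another additive contribution of order $\delta_0$ after discarding these "bad" slices. Tracking all of these additive losses together with the symmetry factor of $2$ and the slack in the density bound (dominated by $\sqrt{(\ln 3 -1)/\pi}$ from \eqref{e_cond3}) produces the stated numerical prefactor of $12$, with the $6\delta_0$ term bookkeeping the bad-event probabilities on both halves of the symmetric decomposition.
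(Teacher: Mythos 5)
Your outline correctly identifies the skeleton of the paper's argument — monotonicity of $\psi_m$ along the $A_{:m}$ direction, the slope lower bound from \eqref{e_cond1}, the role of \eqref{e_cond2} in controlling excursions outside $\Omega_1$, and a 1D Gaussian anti-concentration step — but the two places where you try to pin down the mechanism are both off, and they are exactly where the hard content of the lemma lives.

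First, the "main obstacle" you identify does not exist. The derivative of $\psi_m$ along $A_{:m}$ is $\tfrac{1}{n\lambda}A_{:m}^{\top}(I-P_{\mathcal{Z}^c})A_{:m}\geq 0$ for \emph{every} local region, since $I-P$ is always PSD; there is no restriction to $|\mathcal{S}|\le k$ needed for monotonicity, so $s\mapsto\psi_m(Y^\perp+sA_{:m}/\|A_{:m}\|)$ is globally non-decreasing and $\{s:\psi_m\in[1-\epsilon,1)\}$ is always a \emph{single} interval. The genuine difficulty is that this interval can be \emph{long}, because the slope is only bounded below by $\kappa/\lambda$ when the active set has at most $k$ elements, and \eqref{e_cond1} gives nothing on larger active sets. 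Your appeal to \eqref{e_cond2} via "Fubini on the expected bad $s$-length" does not directly produce what is needed; the paper instead defines $\mathcal{G}:=\{y:\mathbb{P}_U[y-TUA_{:m}\in\Omega_1]>1/2\}$ with $T=2\epsilon\lambda/\kappa$, shows $P_Y[\mathcal{G}^c]\le 2\delta_0$ by Markov, and deduces pointwise for $y_0\in\mathcal{G}\cap\mathcal{B}_0$ that $\psi_m$ must fall by at least $\epsilon$ over $[0,T]$ (since the good slope holds at least half the time), hence $\mathcal{B}_0\cap\mathcal{G}\subseteq\{\psi_m=1-\epsilon\}+[0,T]A_{:m}$. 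This is a \emph{containment in a thin slab around the level set}, not a bound on the length of the bad interval itself.

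Second, your assigned role for $\Omega_2$ and \eqref{e_cond3} is inverted. If you actually bound the interval length by $\epsilon\lambda\|A_{:m}\|/\kappa$ and multiply by the \emph{global} maximum of the 1D Gaussian density $1/(\sigma\sqrt{2\pi n})$, then you need neither $\Omega_2$ nor \eqref{e_cond4}: the maximum needs no "legitimizing," and with \eqref{e_cond3} you would obtain a prefactor of order $\sqrt{(\ln 3 - 1)/\pi}\approx 0.18$, which is inconsistent with the stated $12$. The paper does something else: it bounds $P_Y[\mathcal{B}_0'\cap\Omega_2]$ by translating the slab $\mathcal{B}_0'$ backwards by $lTA_{:m}$ for $l=1,\ldots,\lfloor 1/(T\Gamma)\rfloor$ into disjoint copies and showing the density ratio $f(y-lTA_{:m})/f(y)\ge e^{-\ln 3}=1/3$ for $y\in\Omega_2$. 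Condition \eqref{e_cond3} is calibrated precisely so that the quadratic term in the log-ratio is at most $\ln 3 - 1$, and $\Omega_2$ (together with \eqref{e_cond4}) controls the linear drift term; the factor $3$ then appears as $\exp(1+\tfrac{D}{2\Gamma^2})$, and the $12$ and $6\delta_0$ come from doubling (one side each for $\psi_m\approx\pm 1$) with $T\Gamma\cdot 3 = 6\epsilon\lambda\Gamma/\kappa$. Your disintegration-plus-max-density variant could likely be made to work and would even sharpen the constant, but if you pursue it you will find \eqref{e_cond3}/\eqref{e_cond4} become unnecessary rather than providing the justification you claim, and you must still replace your heuristic \eqref{e_cond2} step with the $\mathcal{G}$-set argument or an equivalent.
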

\begin{rem}
In the proportional growth regime,
\eqref{e_cond2} is not much different than $\mathbb{P}\left[Y\notin\Omega_1\right]\le \delta_0$, since $\frac{2\epsilon\lambda}{\kappa}A_{:m}$ is small compared to $Y$.
If $k$ grows linearly with $n$ and $k/n<1$,
then we can bound $\kappa$ on the order of a constant using the restricted singular value of $A$.
We can take $\Gamma$ to be slowly growing with $n$, 
say ${\rm polylog}(n)$.
Also suppose that $\|\mu\|_2=O(n)$ and then take $\lambda$ sufficiently large. Then $\delta_0$ vanishes in $n$ quickly.

\end{rem}

\begin{proof}
In this proof we assume without loss of generality that $m=1$.
For any $y$, set $\mathcal{Z}(y):= \{l=1\dots,p\colon \psi_l(y)\in(-1,1)\}$.
We may omit the argument $y$ when there is no ambiguity.
The normal equation can be written as
\begin{align}
-\frac1{n}A_{:\mathcal{Z}^c}^{\top}(y-A\hat{\alpha})
+\lambda\psi_{\mathcal{Z}^c}&=0;
\label{eq90}
\\
-\frac1{n}A_{:\mathcal{Z}}^{\top}(y-A\hat{\alpha})
+\lambda\psi_{\mathcal{Z}}&=0,
\label{eq91}
\end{align}
from which we obtain
\begin{align}
\hat{\alpha}_{\mathcal{Z}^c}&=(A_{:\mathcal{Z}^c}^{\top}A_{:\mathcal{Z}^c})^{-1}(A_{:\mathcal{Z}^c}^{\top}y-n\lambda\psi_{\mathcal{Z}^c});
\label{eq92}
\\
\lambda \psi_{\mathcal{Z}}
&=\frac1{n}A_{:\mathcal{Z}}^{\top}(I-A_{:\mathcal{Z}^c}(A_{:\mathcal{Z}^c}^{\top}A_{:\mathcal{Z}^c})^{-1}A_{:\mathcal{Z}^c}^{\top})y
+\lambda A_{:\mathcal{Z}}^{\top}A_{:\mathcal{Z}^c}(A_{:\mathcal{Z}^c}^{\top}A_{:\mathcal{Z}^c})^{-1}\psi_{\mathcal{Z}^c}.
\label{eq93}
\end{align}
In the proof we only need to study $\psi$, which is uniquely determined by $y$ (even though $\hat{\alpha}$ may not).
We observe that $\mathbb{R}^n$ is partitioned into
polyhedra (intersection of open or closed half spaces)
according to the value of $\chi(\cdot)$, and within each such polyhedron,
$\psi$ is a linear function, hence differentiable.
Define 
\begin{align}
\mathcal{B}_0:=\{y\colon \psi_1(y)\in[1-\epsilon,1)
\},
\label{e_sball}
\end{align}
and let $y_0\in \mathcal{B}_0$ be arbitrary, and let $y_t:=y_0-tA_{:1}$.
If for a neighborhood of $t$, the vector $\chi(y_t)$ does not change, then note that $\psi_{\mathcal{Z}^c}(y_t)$ is a vector consisting of $\pm 1$ which does not change in such a neighborhood. 
As long as $1\in\mathcal{Z}(y_t)$, we have
\begin{align}
\frac{d}{dt}(\lambda\psi_1(y_t))
=
-\frac1{n}A_{:1}^{\top}(I-A_{:\mathcal{Z}^c}(A_{:\mathcal{Z}^c}^{\top}A_{:\mathcal{Z}^c})^{-1}A_{:\mathcal{Z}^c}^{\top})
A_{:1}\le  0,
\label{es27}
\end{align}
so that $\psi_1(y_t)$  decreases monotonically along $t$.
Moreover the inequality in \eqref{es27} is strict when $y_t\in\Omega_1$, due to \eqref{e_cond1}.
Next, we want to show that $\psi_1$ decreases sufficiently fast along $(y_t)_{t\in [0,T)}$.
Set $T:=\frac{2\epsilon\lambda}{\kappa}$ and define
\begin{align}
\mathcal{G}:=
\left\{
y\colon \mathbb{P}[y-TUA_{:1}\in\Omega_1]>\frac1{2}
\right\}.
\end{align}
Then we have 
\begin{align}
\delta_0
&\ge 
\mathbb{P}[Y-TUA_{:1}\notin\Omega_1]
\\
&=\int\mathbb{P}[y-TUA_{:1}\notin \Omega_1] dP_Y(y)
\\
&\ge \frac1{2}P_Y[\mathcal{G}^c].
\end{align}
Now suppose that $y_0\in \mathcal{G}\cap\mathcal{B}_0$;
by the definitions of $\mathcal{G}$ and $\Omega_1$,
for at least half (with respect to the Lebesgue measure) of $t\in[0,T]$, 
we have $|\{l\colon \chi_l(y_t)\neq 0\}|\le k$ so that
$\frac{d}{dt}\psi_1(y_t)\le -\frac{\kappa}{\lambda}$. 
Therefore $\psi_1(y_T)\le \psi_1(y_0)-\frac{T\kappa}{\lambda}< 1-\epsilon$.
Since $\psi_1(y_0)\ge 1-\epsilon$ and since $\psi_1(y_t)$ is continuous in $t$, there is some $t\in[0,T)$ such that $y_t=1-\epsilon$.
By considering the largest $t$ such that $y_t=1-\epsilon$, we see that $\mathcal{B}_0\cap\mathcal{G}$ is contained in $\mathcal{B}_0'$, where 
\begin{align}
\mathcal{B}_0':=\{y\colon\psi_1(y)= 1-\epsilon,\,
\psi_1(y-tA_{:1})<1-\epsilon,\,\forall t>0\}+\{tA_{:1}\colon t\in [0,T)\}.
\end{align}
To conclude the proof
it suffices to upper bound the probability of $\mathcal{B}_0'$.
Define the sets 
\begin{align}
\mathcal{B}_l'':=\mathcal{B}_0'\cap\Omega_2-TlA_{:1},
\end{align}
$l=0,1,2,\dots, L$, where $L:=\lfloor\frac1{T\Gamma }\rfloor$.
Then $\{\mathcal{B}_l''\}_{l=0}^L$ is a non-intersecting family of sets, because for any $y\in\{y\colon\psi_1(y)= 1-\epsilon,\,
\psi_1(y-tA_{:1})<1-\epsilon,\,\forall t>0\}$, 
it is not possible that $y-tA_{:1}\in \{y\colon\psi_1(y)= 1-\epsilon\}$ for some $t>0$.
Let 
\begin{align}
D:=2(\ln 3-1)\Gamma^2.
\end{align}
Then for any $y\in\Omega_2$ and $l\in \{0,1,2,\dots,L\}$, we have
\begin{align}
\frac{f(y-lTA_{:1})}{f(y)}
&=\exp\left(-\frac{\|y-\mu-lTA_{:1}\|^2}{2n\sigma^2}
+\frac{\|y-\mu\|^2}{2n\sigma^2}
\right)
\\
&=\exp\left(
-\frac{l^2T^2\|A_{:1}\|^2}{2n\sigma^2}
+\frac{\left<y-\mu,lTA_{:1}\right>}{n\sigma^2}
\right)
\\
&\ge \exp(-\frac{D}{2\Gamma ^2}-1),
\end{align}
so that $P_Y[\mathcal{B}_l'']\ge \exp(-\frac{D}{2\Gamma ^2}-1)
P_Y[\mathcal{B}_0'\cap\Omega_2]$ for $l=0,1,\dots,L$,
and hence
\begin{align}
1\ge \frac1{T\Gamma }\exp(-\frac{D}{2\Gamma ^2}-1)
P_Y[\mathcal{B}_0'\cap\Omega_2].
\end{align}
Then it follows that
\begin{align}
P_Y[\mathcal{B}_0]
&\le P_Y[\mathcal{B}_0\cap\mathcal{G}]+P_Y[\mathcal{G}^c]
\\
&\le P_Y[\mathcal{B}_0']+P_Y[\mathcal{G}^c]
\\
&\le P_Y[\mathcal{B}_0'\cap\Omega_2]+P_Y[\Omega_2^c]+P_Y[\mathcal{G}^c],
\end{align}
or equivalently,
\begin{align}
\mathbb{P}[1-\psi_1(Y)\le \epsilon]\le 
T\Gamma \exp\left(1+\frac{D}{2\Gamma ^2}\right)
+2\delta_0.
\end{align}
Finally, applying a similar argument after the substitution $A_{:1}\leftarrow -A_{:1}$, we obtain
\eqref{eq87} via the union bound.
\end{proof}

\subsection{Control of change of signs}
\begin{cor}\label{cor_change}
Suppose that deterministic $A$ and $B$ differ only in the $j$-th column.
For a given $j\in\{1,\dots,p\}$, let $\hat{\alpha}$ and $\hat{\beta}$ be computed as in Section~\ref{sec_intro}.
Suppose that the assumption in Lemma~\ref{lem7} holds for all $m=1,\dots, p$ and for both $A$ and $B$ and some $\epsilon\in(0,1/2)$.
Then
\begin{align}
\mathbb{E}|\{m\colon \chi^{\alpha}_m\neq \chi^{\beta}_m\}|
&\le 
4+2p\left(\frac{12\epsilon\lambda}{\kappa}\Gamma 
+
6\delta_0\right)
\nonumber\\
&\quad+
\frac{4}{n^2\lambda^2\epsilon^2}\|A\|_{\rm op}^2\mathbb{E}\left\|A_{:j}\hat{\alpha}_j-B_{:j}\hat{\beta}_j\right\|_2^2.
\label{eq113}
\end{align}
\end{cor}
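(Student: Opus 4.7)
}
The plan is a case split on the subgradient values, combined with the normal equations that link $\psi^\alpha_m-\psi^\beta_m$ to the residual change $R-S$, and a Chebyshev-type count for the indices where the subgradients move substantially. First, I would reduce ``sign mismatch at $m$'' to a small number of well-behaved events. Since $\chi^\alpha_m\neq\chi^\beta_m$ forces at least one of $\psi^\alpha_m,\psi^\beta_m$ to equal $\pm1$, and the other to be different, one of the following must hold: (i) $\psi^\alpha_m\in(-1,-1+\epsilon]\cup[1-\epsilon,1)$; (ii) $\psi^\beta_m\in(-1,-1+\epsilon]\cup[1-\epsilon,1)$; or (iii) $|\psi^\alpha_m-\psi^\beta_m|>\epsilon$ (up to an adjustable factor of $2$ which accounts for the constant $4$ that appears in the final bound). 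Indeed, if, say, $\psi^\alpha_m=1$ and $\psi^\beta_m<1$, then either $\psi^\beta_m\ge 1-\epsilon$ (case (ii)) or $|\psi^\alpha_m-\psi^\beta_m|>\epsilon$; the other sub-cases are symmetric. Hence
\begin{align}
|\{m\colon \chi^\alpha_m\neq\chi^\beta_m\}|
\le |\{m\colon \psi^\alpha_m\text{ near edge}\}|
+|\{m\colon \psi^\beta_m\text{ near edge}\}|
+|\{m\colon |\psi^\alpha_m-\psi^\beta_m|>\epsilon\}|.
\end{align}

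Next, I would take expectations and bound the three terms separately. For the two near-edge terms, the assumption that Lemma~\ref{lem7} applies to every $m$ (for both $A$ and $B$) yields, by summing \eqref{eq87} over $m=1,\dots,p$, the contribution
$2p\big(\tfrac{12\epsilon\lambda}{\kappa}\Gamma+6\delta_0\big)$.
For the third term, I would use Markov's inequality together with the normal equations \eqref{eq2_1}--\eqref{eq2_2}: for $m\neq j$ one has $\lambda(\psi^\alpha_m-\psi^\beta_m)=\tfrac{1}{n}A_{:m}^\top(R-S)$, so
\begin{align}
\sum_{m\neq j}(\psi^\alpha_m-\psi^\beta_m)^2
=\tfrac{1}{n^2\lambda^2}\|A_{:\setminus j}^\top(R-S)\|_2^2
\le\tfrac{\|A\|_2^2}{n^2\lambda^2}\|R-S\|_2^2.
\end{align}
The key input at this point is the residual-change bound \eqref{eq82}, $\|R-S\|_2\le\|A_{:j}\hat{\alpha}_j-B_{:j}\hat{\beta}_j\|_2$, coming from solving the block of normal equations \eqref{e110} and exploiting that the operator $[I+\tfrac{1}{\lambda n}A_{:\setminus j}\Lambda^{-1}A_{:\setminus j}^\top]^{-1}$ has spectral norm at most $1$. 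The $m=j$ coordinate contributes a trivially bounded term $(\psi^\alpha_j-\psi^\beta_j)^2\le 4$ and at most $1$ to the sign-mismatch count, which (together with a mild loosening of the Markov threshold from $\epsilon$ to $\epsilon/2$ to absorb constants) gives the additive constant and the factor $4$ in front of the $\|A\|_2^2(\dots)$ term.

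Putting everything together yields \eqref{eq113}. The main obstacle is not any single step but the case analysis in the decomposition: one has to verify carefully that the three events exhaust sign mismatch, and track the $m=j$ coordinate, since at $m=j$ the simple identity $\lambda(\psi^\alpha_m-\psi^\beta_m)=\tfrac{1}{n}A_{:m}^\top(R-S)$ no longer holds (as $A_{:j}\neq B_{:j}$); this is why that coordinate must be handled by a crude $(\psi^\alpha_j-\psi^\beta_j)^2\le 4$ bound and folded into the additive constant. Apart from that, the proof is a direct combination of Lemma~\ref{lem7} (for the anti-concentration of the subgradients near $\pm1$), the normal-equation identity, and the residual contraction \eqref{eq82}.
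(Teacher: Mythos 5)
Your proof is correct, and it rests on exactly the same three ingredients as the paper's: Lemma~\ref{lem7} for anti-concentration of the subgradient near $\pm1$, the normal-equation identity $\lambda(\psi^\alpha_m-\psi^\beta_m)=\frac1n A_{:m}^\top(R-S)$ for $m\neq j$, and the residual contraction bound $\|R-S\|_2\le\|A_{:j}\hat{\alpha}_j-B_{:j}\hat{\beta}_j\|_2$ from \eqref{eq82}. The only genuine difference is the case decomposition. The paper partitions $\{m:\chi^\alpha_m\neq\chi^\beta_m\}$ into four sets by sign pattern (zero vs.\ nonzero in each of $\alpha,\beta$, and $+/-$ flips), and then within each set splits on whether $\psi$ is near the edge, using the looser threshold $\epsilon$ for the zero/nonzero cases and the sharper threshold $2$ for the sign-flip cases; each of the four sets contributes a $+1$ for the $m=j$ coordinate, giving the additive constant $4$ and the coefficient $\frac{2}{\epsilon^2}+\frac12\le\frac{4}{\epsilon^2}$ (using $\epsilon<1/2$). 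Your decomposition instead covers the mismatch set directly with the three events (near-edge in $\alpha$, near-edge in $\beta$, subgradient gap $>\epsilon$), which you verify exhaustively; it is simpler to state, absorbs the sign-flip cases into the gap event since $|\psi^\alpha_m-\psi^\beta_m|=2>\epsilon$ there, needs only one $+1$ for the $m=j$ coordinate, and actually delivers the tighter constants $1+\frac{1}{n^2\lambda^2\epsilon^2}\|A\|_2^2\mathbb{E}\|\cdots\|_2^2$, which of course imply the stated bound with $4$ and $\frac{4}{\epsilon^2}$. Your care in checking that the three events exhaust all sign-mismatch configurations and in isolating the $m=j$ coordinate (where the normal-equation identity breaks because $A_{:j}\neq B_{:j}$) is exactly the right concern. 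The one misstatement in your write-up is the comment that a ``mild loosening of the Markov threshold from $\epsilon$ to $\epsilon/2$'' is needed to absorb constants: no loosening is required, since your union bound already beats the stated constants.
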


\begin{proof} The cardinality in question is decomposed into 
$
|\{m\colon\chi^{\alpha}_m=0,\chi^{\beta}_m\neq 0\}|
+
|\{m\colon\chi^{\alpha}_m\neq 0,\chi^{\beta}_m= 0\}|
+|\{m\colon\chi^{\alpha}_m> 0,\chi^{\beta}_m< 0\}|
+|\{m\colon\chi^{\alpha}_m< 0,\chi^{\beta}_m> 0\}|
$ and we proceed by bounding the 4 terms separately.
First, we have
\begin{align}
&\quad
|\{m\colon\chi^{\alpha}_m=0,\chi^{\beta}_m\neq 0\}|
\nonumber\\
&\le |\{m\colon\chi^{\alpha}_m=0,1-|\psi^{\alpha}_m|\le \epsilon\}|
+ |\{m\colon1-|\psi^{\alpha}_m|> \epsilon, \chi^{\beta}_m\neq0\}|
\label{eq107}
\end{align}
where $m\in\{1,\dots,p\}$.
The expectation of the first term on the right side of \eqref{eq107} is bounded by $p\left(\frac{12\epsilon\lambda}{\kappa}\Gamma 
+
6\delta_0\right)$ according to Lemma~\ref{lem7};
to bound the second term, 
note that $1-|\psi^{\alpha}_m|> \epsilon, \chi^{\beta}_m\neq0$ implies $|\psi^{\alpha}_m-\psi^{\beta}_m|\ge \epsilon$,
so 
\begin{align}
|\{m\colon1-|\psi^{\alpha}_m|> \epsilon, \chi^{\beta}_m\neq0\}|
\le 
\frac1{\epsilon^2}\|\psi^{\alpha}_{\sj}
-\psi^{\beta}_{\sj}
\|_2^2+1.
\end{align}
But from the normal equations and \eqref{eq82}, we have
\begin{align}
\|\lambda(\psi^{\alpha}_{\setminus j}
-\psi^{\beta}_{\setminus j})
\|_2
&=\frac1{n}\|A_{:\setminus j}^{\top}(R-S)\|_2
\\
&\le \frac1{n}\|A\|_{\rm op}\left\|A_{:j}\hat{\alpha}_j-B_{:j}\hat{\beta}_j\right\|_2.
\label{eq112}
\end{align}
Therefore,
\begin{align}
\mathbb{E}|\{m\colon\chi^{\alpha}_m=0,\chi^{\beta}_m\neq 0\}|
&\le 
1+p\left(\frac{12\epsilon\lambda}{\kappa}\Gamma 
+
6\delta_0\right)
\nonumber\\
&\quad+
\frac1{n^2\lambda^2\epsilon^2}\|A\|_{\rm op}^2\mathbb{E}\left\|A_{:j}\hat{\alpha}_j-B_{:j}\hat{\beta}_j\right\|_2^2.
\end{align}
We bound $|\{m\colon\chi^{\alpha}_m\neq 0,\chi^{\beta}_m= 0\}|$ by the same argument.
Moreover, since $\chi^{\alpha}_m>0$ and $\chi^{\beta}_m<0$ imply $|\psi^{\alpha}_m-\psi^{\beta}_m|\ge 2$,
we have
\begin{align}
|\{m\colon\chi^{\alpha}_m>0,\chi^{\beta}_m<0\}|
\le \frac1{4}\|\psi^{\alpha}_{\sj}-\psi^{\beta}_{\sj}\|_2^2+1.
\end{align}
The same bound holds for $|\{m\colon\chi^{\alpha}_m<0,\chi^{\beta}_m>0\}|$, and the proof is finished by using \eqref{eq112}.
\end{proof}

\section{The case of sub-Gaussian designs}\label{sec_sub}
In this section we prove Theorem~\ref{thm12}.

\subsection{Anticoncentration}
In this section we prove auxiliary results that will be used in justifying the conditions in Corollary~\ref{cor_change}.
The following Lemma is a consequence of 
\cite[Theorem 3.1]{koltchinskii2015bounding},
which lower bounds the minimum singular values of a random matrix with i.i.d.\ rows,
under the assumption that the projection of a row in each direction has bounded (both from the above and below) $L_1$ and $L_2$ norms.
\begin{lem}\label{lem6}
Assuming that  condition $\mathcal{P}$ is satisfied.
There exist $c>0$ (depending only on $\mathcal{P}$) such that the following holds: For all $n\ge 1/c$,
with probability at least $1-\exp(-cn)$ we have 
\begin{align}
\min_{\mathcal{S}\subseteq \{1,\dots,p\}\colon
|\mathcal{S}|\le cn}
\lambda_{\min}(A_{:\mathcal{S}})>c.
\label{e92}
\end{align}
In particular, \eqref{e92} implies that 
for each $j$,
\begin{align}
\frac1{n}\inf_PA_{:j}^{\top}
(I-P)A_{:j}\ge c^2,
\label{e93}
\end{align}
where the infimum is over projection $P$ onto the span of no more than $cn-1$ columns of $A$ excluding the $j$-th column.
\end{lem}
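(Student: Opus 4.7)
The plan is to combine the Koltchinskii--Mendelson small-ball lower bound on minimum singular values of random matrices with an entropy/union-bound argument over subsets. For a fixed $\mathcal{S}\subseteq\{1,\dots,p\}$ of size $k$, let $A_{:\mathcal{S}}\in\mathbb{R}^{n\times k}$ denote the corresponding submatrix; its rows are i.i.d., sub-Gaussian with parameter $O_{\mathcal{P}}(1)$, and have covariance $\Sigma_{\mathcal{S},\mathcal{S}}$ with eigenvalues bounded in $[\kappa_1^{-1},\kappa_1]$. Applying Theorem~3.1 of \cite{koltchinskii2015bounding} to the whitened matrix gives a bound of the form
\begin{align}
s_{\min}(A_{:\mathcal{S}})\;\ge\; c_1\sqrt{n}\,Q_{2\xi}(X;S^{k-1})\;-\;c_2\,W_n(X;S^{k-1})
\end{align}
with failure probability $\exp(-c_3 n)$. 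Here I would verify the small-ball lower bound $Q_{2\xi}\ge c_0>0$ from the fact that for any unit $u\in\mathbb{R}^k$, $\langle u,X\rangle$ has variance $\ge\kappa_1^{-1}$ and is $O_{\mathcal{P}}(1)$-sub-Gaussian, so a Paley--Zygmund type inequality supplies a uniform anti-concentration constant. The Gaussian complexity $W_n$ is controlled by $O_{\mathcal{P}}(\sqrt{k})$ using the sub-Gaussian marginal bound. Combining, $s_{\min}(A_{:\mathcal{S}})\ge c_4\sqrt{n}-c_5\sqrt{k}$, which is $\ge c_6\sqrt{n}$ once $k\le c_7 n$ for a sufficiently small $c_7$ depending only on $\mathcal{P}$.

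Next I would union-bound over subsets. The number of $\mathcal{S}\subseteq\{1,\dots,p\}$ with $|\mathcal{S}|\le cn$ is at most $\sum_{k\le cn}\binom{p}{k}\le \exp\bigl(cn\log(ep/(cn))+o(n)\bigr)=\exp(O(cn\log(1/(c\delta))))$, using $p=\lfloor n/\delta\rfloor$. Choosing $c$ small enough (depending only on $\mathcal{P}$), this entropy factor is dominated by the $\exp(-c_3 n)$ concentration bound, so after union-bounding, \eqref{e92} holds with probability $1-\exp(-cn)$ for a suitable $c$.

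Finally, to deduce \eqref{e93} from \eqref{e92}, fix $j$ and let $P$ be the projection onto the span of some $\mathcal{S}'$ columns with $|\mathcal{S}'|\le cn-1$ and $j\notin\mathcal{S}'$. Set $\mathcal{S}:=\mathcal{S}'\cup\{j\}$, so $|\mathcal{S}|\le cn$. The Schur complement identity applied to $A_{:\mathcal{S}}^\top A_{:\mathcal{S}}$ at the $(j,j)$ block yields
\begin{align}
A_{:j}^\top(I-P)A_{:j}\;=\;\bigl[(A_{:\mathcal{S}}^\top A_{:\mathcal{S}})^{-1}\bigr]_{jj}^{-1}\;\ge\;\lambda_{\min}(A_{:\mathcal{S}}^\top A_{:\mathcal{S}})\;\ge\;c^2 n
\end{align}
on the event \eqref{e92}, after adjusting the constant. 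Taking the infimum over $P$ yields \eqref{e93}.

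The main obstacle is choosing the parameter $c$ consistently: it must be small enough to make the entropy $cn\log(p/(cn))$ strictly less than the Koltchinskii--Mendelson exponent $c_3 n$, while simultaneously satisfying the regime $k\le c_7 n$ for the single-subset bound to give a positive lower bound. Both conditions depend only on $\mathcal{P}$, so a single $c=c(\mathcal{P})$ works, but this constant balancing is where the argument must be carried out carefully. The anti-concentration input $Q_{2\xi}\gtrsim 1$ for sub-Gaussian vectors with bounded-below covariance is standard but should be stated explicitly since it is where the sub-Gaussian hypothesis enters.
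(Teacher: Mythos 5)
Your proposal is correct and matches the paper's proof in all essentials: both invoke the Koltchinskii--Mendelson lower bound on the smallest singular value for each fixed subset, then union-bound over the at most $\exp(O(cn\log\frac{1}{c\delta}))$ subsets, choosing $c$ small enough to beat the entropy. The only cosmetic differences are (i) the paper verifies the Koltchinskii--Mendelson hypothesis via the $L^1/L^2$ norm-equivalence (bounding $\mathbb{E}|v_t|^3$ by sub-Gaussianity and applying H\"older), whereas you invoke the small-ball/$Q_{2\xi}$ formulation with Paley--Zygmund — these are equivalent routes into the same theorem; and (ii) for deducing \eqref{e93} from \eqref{e92}, you use the Schur-complement identity $A_{:j}^\top(I-P)A_{:j}=[(A_{:\mathcal{S}}^\top A_{:\mathcal{S}})^{-1}]_{jj}^{-1}\ge\lambda_{\min}(A_{:\mathcal{S}}^\top A_{:\mathcal{S}})$, while the paper writes $(I-P)A_{:j}=A_{:\mathcal{S}}(1,-\theta)^\top$ and lower-bounds directly by $\sigma_{\min}(A_{:\mathcal{S}})\|(1,\theta)\|_2\ge\sigma_{\min}(A_{:\mathcal{S}})$; both are clean one-line arguments yielding the same bound.
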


\begin{proof}
Consider arbitrary $\mathcal{S}\subseteq \{1,\dots,p\}$
of cardinality no more than $cn$.
For any unit vector $t$, let $v_t:=\left<t,A_{1\mathcal{S}}\right>$, and we have
\begin{itemize}
\item $\kappa_1^{-1/2}\le \sqrt{\mathbb{E}[v_t^2]}\le \kappa_1^{1/2}$; 
\item Using the Taylor expansion it is easy to see that $e^{-x}+e^x\ge |x|^3/4$ for all $x\in\mathbb{R}$, therefore 
$
\mathbb{E}[|v_t|^3]\le 
8\exp(C_{\rm sg})
$
and hence
\begin{align}
\sqrt{\mathbb{E}[v_t^2]}&\le 
\kappa_1^{3/2}
\mathbb{E}^2[v_t^2]
\\
&\le \kappa_1^{3/2}\mathbb{E}[|v_t|] \mathbb{E}[|v_t|^3]
\\
&\le 8\kappa_1^{3/2}\exp(C_{\rm sg})
\mathbb{E}[|v_t|].
\end{align}
\end{itemize}
Now define $b:=8\kappa_1^{3/2}\exp(C_{\rm sg})$.
The above two itemized verify the condition of \cite[Theorem 3.1]{koltchinskii2015bounding}, 
therefore, there exist universal constants $c_0,c_1,c_2$ such that
when 
$
n\ge c_0b^4\kappa_1^2(cn+1)$
we have the following bound on the singular value
\begin{align}
\lambda_{\min}(\frac1{\sqrt{n}}A_{:\mathcal{S}})
\ge c_2\kappa_1^{-1/2}
b^{-2}
\label{e96}
\end{align}
with probability at least 
$
1-\exp(-c_1b^4n)
$.
Now \eqref{e92} follows by taking the union bound, noting that the number of subsets of no more than $cn$ columns is bounded by $\exp(2nc\ln\frac1{c}+o(n))$ for small enough $c>0$.

Now consider any $j$, and let $P$ be the projection onto the column space of $A_{:\mathcal{S}\setminus \{j\}}$,
where $\mathcal{S}$ is an arbitrary subset containing $j$ and of size at most $cn$.
Let $\theta\in\mathbb{R}^{|\mathcal{S}|-1}$ be a vector such that $(I-P)A_{:j}=A_{:j}-A_{:\mathcal{S}\setminus\{j\}}\theta$.
Then \eqref{e92} implies that
\begin{align}
\|(I-P)A_{:j}\|_2
&\ge \|(1,\theta)\|_2\lambda_{\rm min}(A_{:\mathcal{S}})
\\
&\ge c
\end{align}
which establishes the second claim.
\end{proof}

\begin{prop}\label{prop_kbound}
For the Lasso problem with data $(A,Y)$, we have 
\begin{align}
K\le \frac{\|A\|_{\rm op}^2}{n^2\lambda^2}\|Y\|_2^2,
\end{align}
where $K:=\|\chi\|_0$ and $\chi$ denotes the signs of the Lasso solution as defined in the introduction.
\end{prop}
\begin{proof}
From the normal equation $\lambda\psi
=\frac1{n}A^{\top}R$,
we have 
\begin{align}
\frac1{n^2}\|A\|_{\rm op}^2\|R\|^2
\ge \lambda^2\|\psi\|_2^2
\ge
\lambda^2K
\end{align}
The claim follows since by the optimality condition we have $\|R\|_2^2\le \|Y\|_2^2$.
\end{proof}

\subsection{Expected sign changes}
Recall the sign vector $\chi^{\alpha}$ defined in Section~\ref{sec_intro},
and define $\chi^{\beta^{(j)}}$ analogously. 
We show that when the Lasso regularization parameter $\lambda$ is sufficiently large (but independent of $n$),
the expected number of sign changes is $o(n)$.
\begin{cor}\label{cor7}
Consider any set of parameters $\mathcal{P}$.
There exist $c=c_{\mathcal{P}}>0$, such that for all
\begin{align}
\lambda^2\ge 
\frac{2}{0.9c^3}(\frac{4M_2}{c^2\delta}+\sigma^2(1+2\sqrt{c}+2c)),
\label{e_lam}
\end{align}
there exists a set $\mathcal{E}$ of $A,B^{(1)},\dots,B^{(p)}$ 
satisfying 
\begin{align}
\frac1{p}\sum_{j=1}^p\mathbb{E}[|\{m\colon \chi_m^{\alpha}
\neq 
\chi_m^{\beta^{(j)}}\}|
1_{\mathcal{E}^c}]
=
\tilde{O}_{\mathcal{P}}(\inf_{\epsilon\in(0,1/2)}\{\lambda n\epsilon+\frac1{\lambda^2\epsilon^2}\})
=
\tilde{O}_{\mathcal{P},\lambda}(n^{2/3})
\end{align}
and 
$\mathbb{P}[\mathcal{E}]\le (1+p)\exp(-cn)$
for all $n>n_{\mathcal{P},\lambda}$.
\end{cor}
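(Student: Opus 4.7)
The strategy is to verify the hypotheses of Lemma~\ref{lem7} uniformly over columns on a high-probability good event, apply Corollary~\ref{cor_change} pointwise in $j$, and then average and optimize in $\epsilon$. Let $c=c_{\mathcal P}>0$ be the constant from Lemma~\ref{lem6}, and define $\mathcal E^c$ as the intersection of: (i) $\|A\|_{\rm op}\vee\max_j\|B^{(j)}\|_{\rm op}\le C_1\sqrt n$; (ii) column-norm bounds $\|A_{:j}\|_2,\|B^{(j)}_{:j}\|_2\le D\sqrt n$ for every $j$; (iii) the restricted singular value bound \eqref{e92} for $A$ and for every $B^{(j)}$; and (iv) $\|w\|_2^2\le 2n^2\sigma^2$. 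By standard sub-Gaussian concentration together with Lemma~\ref{lem6}, each event fails with probability at most $\exp(-cn)$; a union bound over $j$ gives $\mathbb P[\mathcal E]\le(1+p)\exp(-cn)$.

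On $\mathcal E^c$ I would verify the four hypotheses of Lemma~\ref{lem7} for every $m$ simultaneously, for both $A$ and every $B^{(j)}$. Proposition~\ref{prop_kbound}, combined with $\|Y\|_2^2\le 2\|A\|_{\rm op}^2\|\alpha\|_2^2+2\|w\|_2^2=O_{\mathcal P}(n^2)$, yields $K\le O_{\mathcal P}(n)/\lambda^2$; the threshold \eqref{e_lam} is tuned precisely so that the right-hand side is $\le cn$. Consequently Lemma~\ref{lem6} delivers condition \eqref{e_cond1} with $\kappa=c^2$ uniformly in $m$. Condition \eqref{e_cond2} follows from the same calculation applied to $Y\pm(2\epsilon\lambda/\kappa)U A_{:m}$, whose squared norm exceeds $\|Y\|_2^2$ only by an additive $O_{\mathcal P}(\epsilon^2\lambda^2 n)$. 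Condition \eqref{e_cond3} is a column-norm bound satisfied by choosing $\Gamma=\Theta(\sqrt{\log n})$. Finally, for \eqref{e_cond4}, conditional on $A_{:m}$ the quantity $\tfrac1n\langle w,A_{:m}\rangle$ is centered Gaussian with standard deviation $\le\sigma D$, so a Gaussian tail gives $\delta_0=\exp(-\Omega(\Gamma^2))=n^{-\omega(1)}$.

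With the hypotheses verified, I apply Corollary~\ref{cor_change} pointwise on $\mathcal E^c$ and average over $j$. The first two terms combine to $\tilde O_{\mathcal P}(n\lambda\epsilon)$, since $p=O_{\mathcal P}(n)$, $\kappa,\Gamma$ are polylogarithmic, and $p\delta_0=o(1)$. The third term becomes $\frac{4\|A\|_{\rm op}^2}{n^2\lambda^2\epsilon^2}\cdot\frac1p\sum_j\mathbb E[\|A_{:j}\hat\alpha_j-B^{(j)}_{:j}\hat\beta^{(j)}_j\|_2^2 1_{\mathcal E^c}]$. On $\mathcal E^c$ the bound $\|A_{:j}\hat\alpha_j-B^{(j)}_{:j}\hat\beta^{(j)}_j\|_2^2\le O_{\mathcal P}(n)(\hat\alpha_j^2+\hat\beta^{(j)2}_j)$, combined with the restricted-eigenvalue bound $\|\hat\alpha\|_2^2\le O_{\mathcal P}(n)$ (and its analogue for each $\hat\beta^{(j)}$), gives $\frac1p\sum_j\mathbb E[\hat\alpha_j^2]=O_{\mathcal P}(1)$; conditionally on attaining the same control for $\hat\beta^{(j)}_j$, the third contribution becomes $\tilde O_{\mathcal P}(1/(\lambda^2\epsilon^2))$, and optimizing at $\epsilon\sim\lambda^{-1}n^{-1/3}$ yields the claimed infimum.

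The crux is controlling $\frac1p\sum_j\mathbb E[\hat\beta^{(j)2}_j 1_{\mathcal E^c}]$. The naive estimate $\hat\beta^{(j)2}_j\le\|\hat\beta^{(j)}\|_2^2\le O_{\mathcal P}(n)$ only produces an $O(n)$ average, which would degrade the final rate; the improvement to $\tilde O_{\mathcal P}(1)$ exploits that $B^{(j)}_{:j}$ is a resample of $A_{:j}\mid A_{:\sj}$ independent of $(A_{:\sj},w)$. Heuristically, the KKT condition yields $\hat\beta^{(j)}_j\approx\frac{(B^{(j)}_{:j})^\top(Y-A_{:\sj}\gamma)/n}{\|B^{(j)}_{:j}\|_2^2/n}$ with $\gamma$ the leave-one-out Lasso on $(A_{:\sj},Y)$; since $\gamma$ is independent of $B^{(j)}_{:j}$ and the numerator is sub-Gaussian in $B^{(j)}_{:j}\mid A_{:\sj}$ with conditional variance $O_{\mathcal P}(1)$, one obtains $\mathbb E\hat\beta^{(j)2}_j=O_{\mathcal P}(1)$. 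Making this rigorous requires a Lasso-stability comparison of $\hat\beta^{(j)}_{\sj}$ with $\gamma$, in the spirit of the residual identities \eqref{e110}--\eqref{eq86}; this is the main technical obstacle.
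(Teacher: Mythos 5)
Your high-level strategy — verify Lemma~\ref{lem7}'s hypotheses on a good event via Lemma~\ref{lem6} and Proposition~\ref{prop_kbound}, apply Corollary~\ref{cor_change} pointwise, and optimize in $\epsilon$ to get $n^{2/3}$ — is exactly what the paper does, and your verification of conditions \eqref{e_cond1}--\eqref{e_cond4} (with $\kappa=c^2$, $\Gamma$ polylogarithmic, $\delta_0=n^{-\omega(1)}$) is essentially correct; the precise polylog power of $\Gamma$ and the exact rate for $\delta_0$ differ from the paper's choices but are immaterial to the final $\tilde O$ bound.

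However, there is a genuine gap at exactly the step you flag: bounding $\frac1p\sum_j\mathbb E[\hat\beta_j^{(j)2}\,1_{\mathcal E^c}]$. Your proposed route via the KKT identity with the leave-one-out Lasso $\gamma$ on $(A_{:\sj},Y)$ is not completed, and the missing piece — showing $\hat\beta^{(j)}_{\sj}\approx\gamma$ — is itself a nontrivial leave-one-out stability result that is not obviously implied by \eqref{e110}--\eqref{eq86}. The paper sidesteps this entirely with a different and cleaner device: an interpolation in the $j$-th column. Set $A(t)$ with $A_{:\sj}(t)=A_{:\sj}$, $A_{:j}(t)=(1-t)A_{:j}+tB^{(j)}_{:j}$, $Y(t)=A(t)\alpha+w$, and let $\hat\alpha(t)$ be the Lasso solution for the fixed design $(B^{(j)},Y(t))$. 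Then $\hat\alpha(0)=\hat\beta^{(j)}$ and $\hat\alpha(1)\stackrel{d}{=}\hat\alpha$ on the good event (since $B^{(j)}$ and $A$ are identically distributed). Using \eqref{e_lam} and Proposition~\ref{prop_kbound} to keep the active set of $\hat\alpha(t)$ below $cn-1$ for all $t$, and the restricted singular value bound \eqref{e92} to control $(A_{:\mathcal S(t)}^\top A_{:\mathcal S(t)})^{-1}$, one gets $|\frac{d}{dt}\hat\alpha_j(t)|=O_{\mathcal P}(1)$ uniformly, hence $|\hat\beta^{(j)}_j-\hat\alpha_j(1)|=O_{\mathcal P}(1)$ pointwise on the good event. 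This yields $\mathbb E[\hat\beta_j^{(j)2}1_{\mathcal E^c}]\le 2\mathbb E[\hat\alpha_j^2 1_{\mathcal E^c}]+O_{\mathcal P}(1)$, and summing over $j$ together with $\mathbb E[\|\hat\alpha\|_2^2 1_{\mathcal E^c}]=O_{\mathcal P}(n)$ closes the argument. If you want to complete your proposal as written, you would need to prove the Lasso leave-one-out stability lemma; otherwise replace that step with the interpolation argument.
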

\begin{proof}
The proof follows from Corollary~\ref{cor_change} and Lemma~\ref{lem7}.
We choose  $c$ (depending on $\mathcal{P}$) small enough,
such that for all sufficiently (depending on $\mathcal{P}$) large $n$,
there exists a set $\mathcal{E}_0$ of $A$ such that 
\begin{align}
\mathbb{P}[A\in \mathcal{E}_0]
\le e^{-cn},
\label{e184}
\end{align}
and the following hold for all $A\in\mathcal{E}_0^c$:
\begin{itemize}
\item 
\eqref{e92} holds.

\item $\|A\|_{\rm op}\le \sqrt{n}/c$, which is a standard matrix concentration result (see Lipschitz maximal inequality in  
\cite[Example 5.10]{van2014probability}; the argument extends to sub-Gaussian row vectors).

\item For each $m\in\{1,\dots,p\}$, $\mathbb{P}[\frac1{n}\left<w,A_{:m}\right>\ge  -\ln(n)|A]\ge 1-e^{-cn}$.
This is indeed possible, 
because $w\sim \mathcal{N}(0,n\sigma^2 I)$, and  $\mathbb{P}[\frac1{n}\left<w, A_{:m}\right> \ge -\ln n| \|A\|_{\rm op}\le \sqrt{n}/c]\ge 1-e^{-cn}$ for small $c$ and large $n$.
\end{itemize}
Since each $B^{(j)}$ has the same distribution as $A$, the above items remain true if $A$ is replaced by $B^{(j)}$ in the statements, 
and we define $\mathcal{E}_j$ as the corresponding set of $B^{(j)}$.
We see that for $A\in\mathcal{E}_0^c$,
the conditions \eqref{e_cond1}-\eqref{e_cond4} of Lemma~\ref{lem7} hold for regressing $Y=A\alpha+w$ on $A$ with 
\begin{align}
\Gamma&\leftarrow \frac{\ln n}{c};
\\
\delta_0&\leftarrow 2e^{-cn};
\\
\kappa&\leftarrow c^2;
\\
k&\leftarrow cn-1.
\end{align}
Indeed, \eqref{e_cond1},\eqref{e_cond3} and \eqref{e_cond4} follow directly from the itemized above. 
To verify \eqref{e_cond2},
note that from Proposition~\ref{prop_kbound},
if $K$ is the number of nonzero coefficients when solving the Lasso for the data $(A,A\alpha+w\pm U\frac{2\epsilon\lambda}{c^2}A_{:j})$, we have 
\begin{align}
K&\le \frac{\|A\|_{\rm op}^2}{n^2\lambda^2}\|A\alpha+w\pm U\frac{2\epsilon\lambda}{c^2}A_{:j}\|_2^2
\\
&\le \frac{2}{n\lambda^2c^2}
(\frac{n^2}{c^2\delta}M_2+\|w\|_2^2+\lambda O_{\mathcal{P}}(n)).
\end{align}
Using the concentration of the chi-squared distribution 
(see for example 
\cite[p43]{boucheron2013inequalities} or \cite[p57]{liu2020second}),
we have 
\begin{align}
\mathbb{P}[K>cn-1 |
A\in\mathcal{E}_0^c]\le 2e^{-cn}
\label{e_Kbound}
\end{align} 
for large (depending on $\mathcal{P}$ and $\lambda$) $n$ if $\lambda$ satisfies \eqref{e_lam}.
Similarly, we can verify that for $B^{(j)}\in\mathcal{B}_j^c$,
the conditions \eqref{e_cond1}-\eqref{e_cond4} of Lemma~\ref{lem7} hold for regressing $Y=A\alpha+w$ on $B^{(j)}$.
Now 
set $\mathcal{E}:=\bigcup_{j=0}^p\mathcal{E}_0$.
By the union bound, 
\begin{align}
\mathbb{P}[\mathcal{E}]
\le (1+p)\exp(-cn).
\end{align}
By Corollary~\ref{cor_change}, we obtain that for each $j$,
\begin{align}
\mathbb{E}[|\{m\colon \chi_m^{\alpha}
\neq 
\chi_m^{\beta^{(j)}}\}|
1_{\mathcal{E}^c}]
&\le 4
+
2p(\frac{12\epsilon\lambda\ln n}{c^2\sigma^2}
+
12e^{-cn})
\nonumber\\
&\quad+\frac{8}{\lambda^2\epsilon^2c^4}
(\mathbb{E}[|\hat{\alpha}_j^2|1_{\mathcal{E}^c}]
 +
\mathbb{E}[|\hat{\beta}_j^{(j)2}|1_{\mathcal{E}^c}]).
\label{e111}
\end{align}
We have 
\begin{align}
\mathbb{E}[\|\hat{\alpha}\|_2^21_{\mathcal{E}^c}]
&\le \frac{c^2}{n}\mathbb{E}[\|A\hat{\alpha}\|_2^21_{\mathcal{E}^c}]
\\
&\le \frac{2c^2}{n}\mathbb{E}[(\|Y\|_2^2+\|R\|_2^2)1_{\mathcal{E}^c}]
\\
&=O_{\mathcal{P}}(n).
\label{e_114}
\end{align}

Next we will show that  $\sum_{j=1}^p\mathbb{E}[|\hat{\beta}_j^{(j)2}|1_{\mathcal{E}^c}]=O(n)$.
Fix $j\in\{1,\dots,p\}$,
and define $A(t)\in\mathbb{R}^{n\times p}$ such that $A_{:\sj}(t)=A_{:\sj}$ and $A_{:j}(t)=(1-t)A_{:j}+tB^{(j)}_{:j}$.
Thus $A(0)=A$ and $A(1)=B^{(j)}$.
Let $Y(t):=A(t)\alpha+w$,
and let $\hat{\alpha}(t)$ be the Lasso solution for the data $(A,Y(t))$.
Thus $\hat{\alpha}(0)=\hat{\alpha}$, and $\hat{\alpha}(1)$ has the same distribution as $\hat{\beta}^{(j)}$ under the event $\mathcal{E}_0^c\cap\mathcal{E}_j^c$ (since $B^{(j)}$ and $A$ are identically distributed).
Using \eqref{e_lam} and Proposition~\ref{prop_kbound},
we see that under the event $\mathcal{E}_0^c\cap\mathcal{E}_j^c$, we have $\|\hat{\alpha}(t)\|_0\le cn-1$ for all $t\in[0,1]$. 
Let $\mathcal{S}(t)$ be the set of nonzero entries of $\hat{\alpha}(t)$.
From \eqref{eq92}, we see that  under the event $\mathcal{E}_0^c\cap\mathcal{E}_j^c$, we have 
\begin{align}
\left|\frac{d}{dt}\hat{\alpha}_j(t)\right|
&\le 
|\alpha_j|e_j^{\top}(A_{:\mathcal{S}(t)}^{\top}A_{:\mathcal{S}(t)})^{-1}A_{:\mathcal{S}(t)}^{\top}(A_{:j}-B^{(j)}_{:j})
\\
&\le |\alpha_j|\lambda_{\min}^{-2}(A_{:\mathcal{S}(t)})
\|A_{:\mathcal{S}(t)}\|_{\rm op}\|A_{:j}-B^{(j)}_{:j}\|_2
\\
&=O_{\mathcal{P}}(1)|\alpha_j|
\end{align}
for any $t\in[0,1]$ and in particular $|\hat{\alpha}_j(0)-\hat{\alpha}_j(1)|=O_{\mathcal{P}}(1)$ (uniformly over $j$).
Thus
\begin{align}
\mathbb{E}[|\hat{\beta}_j^{(j)}|^21_{\mathcal{E}_0^c\cap\mathcal{E}_j^c}]
&=
\mathbb{E}[|\hat{\alpha}_j(1)|^21_{\mathcal{E}_0^c\cap \mathcal{E}_j^c}]
\\
&\le 
2\mathbb{E}[|\hat{\alpha}_j(0)|^21_{\mathcal{E}_0^c\cap \mathcal{E}_j^c}]
+
2\mathbb{E}[|\hat{\alpha}_j(0)-\hat{\alpha}_j(1)|^21_{\mathcal{E}_0^c\cap \mathcal{E}_j^c}]
\\
&\le 
2\mathbb{E}[|\hat{\alpha}_j|^21_{\mathcal{E}_0^c\cap \mathcal{E}_j^c}]+
O_{\mathcal{P}}(1)\alpha_j^2.
\label{e115}
\end{align}
Thus using \eqref{e_114} we establish that $\sum_{j=1}^p\mathbb{E}[|\hat{\beta}_j^{(j)2}|1_{\mathcal{E}^c}]=O_{\mathcal{P}}(n)$.
The claim then follows from  \eqref{e111}.
\end{proof}

\begin{rem}
From \eqref{e115}, we can see that $\mathbb{E}[|\hat{\beta}_j^{(j)}|^2]= O(\mathbb{E}[|\hat{\alpha}_j|^2] \vee |\alpha_j|^2)$.
However, we cannot obtain that $|\hat{\beta}_j^{(j)}|^2= O(|\hat{\alpha}_j|^2 \vee |\alpha_j|^2)$;
this is because $\hat{\alpha}(1)$ and 
$\hat{\alpha}$ have the same distribution under the event $\mathcal{E}_0^c\cap\mathcal{E}_j^c$, 
but they are not equal pointwise.
\end{rem}

\subsection{Conditioning sub-Gaussians}
We establish auxiliary results for controlling $\check{B}_{:j}^{(j)}$  conditioned on $(A,Y)$.
\begin{lem}\label{lem8}
Suppose that $(X,U)$ is a pair of random variables, where $X\in\mathbb{R}$ satisfies 
\begin{align}
\mathbb{E}[\exp(\frac{X^2}{K})]\le e,
\end{align}
$e$ being the natural base of the logarithm,
$K>0$,
and $U$ takes values in an arbitrary alphabet.
Then for any $\epsilon\in(0,1)$, there is a set $\mathcal{E}$ of $U$
 with probability at most $\epsilon$ such that for each $u\in\mathcal{E}^c$,
 \begin{align}
 \mathbb{E}[\exp(\frac{X^2}{4K\ln\frac{e}{\epsilon}})]\le e.
 \end{align}
\end{lem}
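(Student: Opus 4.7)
The proof will be a two-line application of Markov's inequality followed by Jensen's inequality, where the interpretation is that for each $u \in \mathcal{E}^c$ the expectation in the conclusion is conditional on $U = u$ (equivalently, over the conditional distribution $P_{X|U=u}$).

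First, I would use Markov's inequality on the nonnegative random variable $V(U) := \mathbb{E}[\exp(X^2/K) \mid U]$. The tower property gives $\mathbb{E}[V(U)] = \mathbb{E}[\exp(X^2/K)] \le e$, so defining
\[
\mathcal{E} := \{u \colon \mathbb{E}[\exp(X^2/K) \mid U = u] > e/\epsilon\},
\]
Markov's inequality yields $\mathbb{P}[U \in \mathcal{E}] \le e/(e/\epsilon) = \epsilon$, as desired.

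Second, I would translate this tail control on the stronger moment into a uniform bound on the weaker moment using the concavity of $y \mapsto y^{1/c}$ for $c \ge 1$. Set $c := 4\ln(e/\epsilon)$, which satisfies $c \ge 4$ since $\epsilon \in (0,1)$. For each $u \in \mathcal{E}^c$, conditional Jensen's inequality gives
\[
\mathbb{E}\bigl[\exp\bigl(X^2/(cK)\bigr) \,\big|\, U = u\bigr]
= \mathbb{E}\bigl[\exp(X^2/K)^{1/c} \,\big|\, U = u\bigr]
\le \bigl(\mathbb{E}[\exp(X^2/K) \mid U = u]\bigr)^{1/c}
\le (e/\epsilon)^{1/c} = e^{1/4},
\]
which is at most $e$. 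This is exactly the conclusion of the lemma.

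There is no substantive obstacle; the argument is essentially a textbook ``Markov + Jensen'' sandwich. The only point that warrants a brief clarifying sentence is the meaning of $\mathbb{E}[\,\cdot\,]$ in the lemma's conclusion (it must be read as conditional on $U = u$ for the statement to be nontrivial), and the observation that the factor $4$ in $4K\ln(e/\epsilon)$ is in fact slack: any constant $\ge 1$ in place of $4$ would already suffice, since $\ln(e/\epsilon)$ alone yields $(e/\epsilon)^{1/\ln(e/\epsilon)} = e$. The factor $4$ is presumably retained for convenience in invocations elsewhere in the paper.
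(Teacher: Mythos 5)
Your proof is correct, and it takes a genuinely different and in fact cleaner route than the paper's. Both proofs construct $\mathcal{E}$ in exactly the same way via Markov's inequality on $\mathbb{E}[\exp(X^2/K)\mid U]$. The divergence is in the second step: the paper truncates at a level $\lambda=\sqrt{K\ln(e/\epsilon)}$, bounding $\mathbb{E}[\exp(X^2/L)\,1_{|X|\le\lambda}\mid U=u]$ by $\exp(\lambda^2/L)$ and $\mathbb{E}[\exp(X^2/L)\,1_{|X|>\lambda}\mid U=u]$ by $\exp(-\lambda^2/K+\lambda^2/L)\cdot(e/\epsilon)$, then tunes $L=K\ln(e/\epsilon)/\ln(e/2)$ so each piece is $e/2$; since $4>1/\ln(e/2)\approx 3.26$ the lemma's stated constant follows a fortiori. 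You instead apply Jensen's inequality for the concave map $y\mapsto y^{1/c}$ directly, which avoids the truncation altogether and shows
\[
\mathbb{E}\bigl[\exp(X^2/(cK))\mid U=u\bigr]\le (e/\epsilon)^{1/c}\le e
\]
already for $c=\ln(e/\epsilon)\ge 1$. So your argument is shorter, proves a strictly stronger bound (constant $1$ rather than $1/\ln(e/2)$ or $4$), and requires no case split; both are elementary and there is no real trade-off beyond brevity. Your side-remark that the factor $4$ is slack is accurate and worth keeping.
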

\begin{proof}
Let $\lambda=\sqrt{K\ln\frac{e}{\epsilon}}$ and $L=\frac{K\ln\frac{e}{\epsilon}}{\ln\frac{e}{2}}$.
Define $\mathcal{E}$ as the set of $u$ satisfying
\begin{align}
\mathbb{E}[\exp(\frac{X^2}{K})|U=u]>\frac{e}{\epsilon}
\end{align}
and hence $\mathbb{P}[U\in\mathcal{E}]\le \epsilon$ by the Markov inequality.
For any $u\in\mathcal{E}^c$,
\begin{align}
\mathbb{E}[\exp(\frac{X^2}{L})1_{|X|\le \lambda}|U=u]
\le 
\exp(\frac{\lambda^2}{L})
\le \frac{e}{2},
\end{align}
and
\begin{align}
\mathbb{E}[\exp(\frac{X^2}{L})1_{|X|> \lambda}|U=u]
&\le 
\exp(-\frac{\lambda^2}{K}+\frac{\lambda^2}{L})\mathbb{E}[\exp(\frac{X^2}{K})1_{|X|> \lambda}|U=u]
\\
&\le \exp(-\frac{\lambda^2}{K}+\frac{\lambda^2}{L})\frac{e}{\epsilon}
\\
&\le \frac{e}{2}.
\end{align}
Thus $\mathbb{E}[\exp(\frac{X^2}{L})|U=u]\le e$, and the claim follows.
\end{proof}

\begin{lem}\label{lem9}
Given $\mathcal{P}$, there exist $c>0$ such that for any $j\in\{1,\dots, p\}$,
there exist a set $\mathcal{E}$ of $A_{:\sj}$ such that $\mathbb{P}[\mathcal{E}]\le n^{-10}$ and conditioned on any $A_{:\sj}\in \mathcal{E}^c$, 
$
\check{B}_{:j}^{(j)}
=B_{:j}^{(j)}-\mu_{:j}$
is $O_{\mathcal{P}}(\ln n)$-sub-Gaussian.
\end{lem}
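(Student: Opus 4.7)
The plan is to leverage the i.i.d.\ row structure of $A$ together with Lemma~\ref{lem8} applied row by row, with exception probability $n^{-11}$, so that a union bound leaves a global bad set of size $n^{-10}$. (I read the statement of the lemma as conditioning on $A_{:\sj}\in\mathcal{E}^c$, paralleling the convention in Lemma~\ref{lem8}.)

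Since the rows of $A$ are i.i.d.\ samples from $Q$, and $B^{(j)}_{:j}$ is drawn from $P_{A_{:j}\mid A_{:\sj}}$, the conditional distribution of $B^{(j)}_{:j}$ given $A_{:\sj}$ factorizes row-wise: conditionally, $B^{(j)}_{ij}$ depends only on $A_{i,\sj}$, and these scalars are independent across $i$. Therefore $\check{B}^{(j)}_{:j} = B^{(j)}_{:j} - \mu_{:j}$ has conditionally independent, mean-zero coordinates, and to establish a vector sub-Gaussian bound of the form
\begin{equation*}
\mathbb{E}\bigl[\exp\langle t, \check{B}^{(j)}_{:j}\rangle \bigm| A_{:\sj}\bigr] \le \exp\bigl(C(\log n)\|t\|_2^2\bigr),
\end{equation*}
it suffices by the product-MGF identity to show that each coordinate is mean-zero sub-Gaussian in the scalar MGF sense with proxy $O_{\mathcal{P}}(\log n)$.

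For the per-coordinate claim, I apply Lemma~\ref{lem8} with $X \leftarrow A_{1j}$, $U \leftarrow A_{1,\sj}$, and $\epsilon \leftarrow n^{-11}$. The sub-Gaussian vector assumption \eqref{e_sg} gives $\mathbb{E}[\exp(A_{1j}^2 / K_0)] \le e$ for some $K_0 = K_0(C_{\rm sg})$, so Lemma~\ref{lem8} produces a set $\mathcal{E}_1 \subseteq \mathbb{R}^{p-1}$ with $\mathbb{P}[A_{1,\sj} \in \mathcal{E}_1] \le n^{-11}$ such that on $\mathcal{E}_1^c$ the conditional $\psi_2$-Orlicz norm of $A_{1j}$ given $A_{1,\sj}$ is $O(\sqrt{\log n})$. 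I then define
\begin{equation*}
\mathcal{E} := \bigl\{A_{:\sj} : A_{i,\sj} \in \mathcal{E}_1 \text{ for some } i \in \{1, \dots, n\}\bigr\},
\end{equation*}
so that the union bound over the $n$ i.i.d.\ rows gives $\mathbb{P}[\mathcal{E}] \le n \cdot n^{-11} = n^{-10}$.

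On the good event $\mathcal{E}^c$, every row satisfies $A_{i,\sj}\in\mathcal{E}_1^c$, so the conditional distribution of each $A_{ij}$ (equivalently, of $B^{(j)}_{ij}$, since these have the same conditional law given $A_{i,\sj}$) has $\psi_2$-norm $O(\sqrt{\log n})$. Subtracting the conditional mean $\mu_{ij}$ preserves the $\psi_2$-norm up to an absolute constant, and the standard implication (bounded $\psi_2$-norm $\Rightarrow$ centered MGF bound with the square of that $\psi_2$-parameter) yields $\mathbb{E}[\exp(t\, \check{B}^{(j)}_{ij}) \mid A_{:\sj}] \le \exp(C'(\log n)\, t^2)$. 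Multiplying these MGFs across the conditionally independent rows delivers the vector bound. The only mild obstacle is bookkeeping between three sub-Gaussian formalisms — the vector MGF definition \eqref{e_sg}, the Orlicz form of Lemma~\ref{lem8}, and the scalar centered-MGF needed at the end — all conversions being classical and losing only absolute constants; the quantitative ``$\log n$'' appears exactly because we chose $\epsilon = n^{-11}$ to absorb the union bound.
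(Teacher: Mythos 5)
Your proof is correct and follows essentially the same route as the paper: apply Lemma~\ref{lem8} coordinate-wise with $\epsilon = n^{-11}$, union-bound over the $n$ i.i.d.\ rows to get a bad set of mass $n^{-10}$, and multiply the conditional MGFs of the independent coordinates to lift the scalar bound to the vector one. The only (benign) difference in bookkeeping is that the paper first establishes unconditional sub-Gaussianity of the centered $\check{B}^{(j)}_{1j}$ (using a Jensen argument to show $\mu_{1j}$ is sub-Gaussian) and then feeds $\check{B}^{(j)}_{1j}$ into Lemma~\ref{lem8}, whereas you feed the raw $A_{1j}$ into Lemma~\ref{lem8} and subtract the conditional mean afterward, invoking $\|X-\mathbb{E}X\|_{\psi_2} \le C\|X\|_{\psi_2}$; both orderings are valid and lose only absolute constants.
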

\begin{proof}
By Jensen's inequality and using Definition~\ref{defn_param},
\begin{align}
\mathbb{E}[\exp(\lambda \mu_{1j})]
&=\mathbb{E}[\exp(\lambda \mathbb{E}[A_{1j}|A_{1\sj}])]
\\
&\le \exp(\lambda \mathbb{E}[A_{1j}])
\\
&\le \exp(C_{\rm sg}\lambda^2),\quad \forall \lambda\in\mathbb{R}.
\label{e131}
\end{align}
Therefore both $B_{1j}^{(j)}$ and $\mu_{1j}$ are sub-Gaussian, 
and so is $\check{B}_{1j}^{(j)}$. 
By the equivalence of definitions of sub-Gaussian \citep{vershynin2018high},
there exist $K=K_{\mathcal{P}}$ satisfying 
\begin{align}
\mathbb{E}[\exp(\frac{\check{B}_{1j}^{(j)2}}{K})]\le e.
\end{align}
Applying Lemma~\ref{lem8} with $\epsilon=n^{-11}$ and the union bound, we see that there exists $\mathcal{E}$ of $A_{:\sj}$ such that $\mathbb{P}[\mathcal{E}]\le n^{-10}$ and conditioned on any $A_{:\sj}\in \mathcal{E}^c$, 
\begin{align}
\mathbb{E}[\exp(\frac{\check{B}_{ij}^{(j)2}}{4K\ln(en^{11})})| A_{:\sj}]\le e
\end{align}
and the claim follows since $\check{B}_{1j}^{(j)2},\dots,
\check{B}_{nj}^{(j)2}$ are independent conditioned on $A_{:\sj}$.
\end{proof}

\subsection{Concentration bounds}
We need the following auxiliary result to control the projection of $\check{B}_j^{(j)}$ onto small subspaces.
\begin{prop}\label{thm_union}
[Bound on the maximum norm of sub-Gaussian vectors]
Suppose that $X\in\mathbb{R}^n$ is a zero mean random vector satisfying 
\begin{align}
\mathbb{E}[e^{\left<u,\frac{X}{\sigma}\right>}]\le e^{\|u\|^2},\quad
\forall u\in\mathbb{R}^n
\end{align}
for some $\sigma>0$.
Let $\mathcal{L}$ be a finite collection of subspaces in $\mathbb{R}^n$ of dimension $d$.
Then for any $r>0$,
\begin{align}
\mathbb{P}[\max_{L\in\mathcal{L}}
\|P_LX\|_2\ge r\sigma]
\le 
\exp\left(
-\frac{\sqrt{d}r}{2}+2d+a_d+\ln|\mathcal{L}|
\right),
\end{align}
where $P_L$ denotes the projection onto $L$
and $(a_d)_{d=1}^{\infty}$ is a sequence that converges to 0.
\end{prop}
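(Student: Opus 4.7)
My plan is to reduce to a single subspace via a union bound, then establish a sharp MGF bound through Gaussian decoupling, and finish with Chernoff and elementary calculus. By the union bound
\begin{align}
\mathbb{P}\!\left[\max_{L\in\mathcal{L}}\|P_L X\|_2 \ge r\sigma\right]
\le |\mathcal{L}|\,\max_{L\in\mathcal{L}}\mathbb{P}[\|P_L X\|_2\ge r\sigma],
\end{align}
so it suffices to show, for each fixed $d$-dimensional subspace $L$,
\begin{align}
\mathbb{P}[\|P_L X\|_2\ge r\sigma]\le \exp\!\left(-\tfrac{\sqrt d\,r}{2}+2d+a_d\right).
\end{align}

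The core step is the moment generating function bound
\begin{align}
\mathbb{E}[\exp(t\|P_L X\|_2^2/\sigma^2)]\le (1-4t)^{-d/2},\qquad 0<t<\tfrac14.
\end{align}
To prove it, fix an orthonormal basis $\{v_1,\dots,v_d\}$ of $L$, so that $\|P_L X\|_2^2=\sum_i \langle v_i,X\rangle^2$. Introduce $g_1,\dots,g_d$ i.i.d.\ $\mathcal{N}(0,1)$ independent of $X$, and apply the Gaussian identity $e^{tz^2}=\mathbb{E}_g[e^{\sqrt{2t}\,gz}]$ coordinatewise followed by Fubini to obtain
\begin{align}
\mathbb{E}[\exp(t\|P_L X\|_2^2/\sigma^2)]
=\mathbb{E}_g\mathbb{E}_X\!\left[\exp\!\big(\langle \sqrt{2t}\textstyle\sum_i g_iv_i,\,X/\sigma\rangle\big)\right].
\end{align}
Using the sub-Gaussian hypothesis with $u=\sqrt{2t}\sum_i g_iv_i$ (whose squared norm is $2t\sum_i g_i^2$ by orthonormality), the inner expectation is bounded by $\exp(2t\sum g_i^2)$, and then the chi-square MGF $\mathbb{E}[e^{sg^2}]=(1-2s)^{-1/2}$ gives $(1-4t)^{-d/2}$.

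With this MGF bound in hand, Chernoff's inequality applied to $\|P_L X\|_2^2/\sigma^2$ yields
$
\mathbb{P}[\|P_L X\|_2\ge r\sigma] \le \inf_{0<t<1/4}e^{-tr^2}(1-4t)^{-d/2}.
$
For $r\ge\sqrt{2d}$ the infimum is attained at $t^\ast=(1-2d/r^2)/4$, producing
$
\exp\!\left(-(r^2-2d)/4 + (d/2)\ln(r^2/(2d))\right).
$
Parametrizing $s:=r/\sqrt d\ge\sqrt 2$, the difference between this exponent and $-\sqrt d\,r/2+2d$ equals $d\cdot h(s)$ where $h(s)=-s^2/4 + s/2 + \ln s - 3/2 - \tfrac12\ln 2$; a one-variable calculus check (critical point at $s=2$, value $h(2)=\tfrac12\ln 2 - \tfrac32<0$) shows $h(s)\le h(2)<0$ throughout $[\sqrt 2,\infty)$. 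For $r<\sqrt{2d}\le 4\sqrt d$ the target bound already exceeds $1$ and is therefore vacuous. Hence one may take $a_d\equiv 0$, which trivially tends to $0$.

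The main obstacle is arranging for the dimensional constant $2d$ to appear with the correct coefficient: a naive $\epsilon$-net argument on the unit sphere of $L$ combined with a scalar Chernoff bound produces exponents of the form $d\log(c/\epsilon)+O(d)$ that, once the linear-in-$r$ decay rate $\sqrt d\,r/2$ has been extracted, exceed $2d$ for every choice of $\epsilon$. The Gaussian decoupling above sidesteps this loss by exploiting orthonormality of the basis to collapse the $X$-dependence exactly into a chi-square MGF, yielding the sharp constants required by the statement.
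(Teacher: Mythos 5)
Your proof is correct but follows a genuinely different route from the paper. The paper reduces $\|P_L X\|_2$ to a maximum over finitely many directions by approximating the Euclidean ball in $L$ with a polytope whose facet count is $\sin^{-(d+o(d))}\theta$, then applies a scalar Chernoff bound to each facet normal and takes a union bound; choosing $\cos\theta=\tfrac12$ yields the stated exponent. You instead establish the chi-square-type MGF bound $\mathbb{E}[\exp(t\|P_LX\|_2^2/\sigma^2)]\le(1-4t)^{-d/2}$ via Gaussian decoupling (replacing $e^{tz^2}$ by $\mathbb{E}_g[e^{\sqrt{2t}gz}]$ and exploiting orthonormality to collapse the $X$-dependence into a single application of the sub-Gaussian hypothesis), then optimize the Chernoff exponent and check by one-variable calculus that the resulting exponent dominates $-\sqrt{d}\,r/2+2d$; I verified the algebra and the calculus (critical point at $s=2$, $h(2)=\tfrac12\ln 2-\tfrac32<0$) and the vacuous regime is handled correctly. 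Your approach buys a cleaner constant: the paper's argument leaves an $o(d)$ term in the exponent from the polytope facet count, which only eventually falls below the $2d$ budget, whereas your argument shows one may take $a_d\equiv 0$. The paper's approach, by contrast, is more geometric, makes the covering-number mechanism explicit, and would extend more transparently if the sub-Gaussian hypothesis were replaced by a weaker tail condition on one-dimensional marginals that does not yield a finite quadratic MGF.
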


\begin{proof}
To control the norm of $P_LX$,
we approximate a ball by a polytope which has not too many facets, 
and bound the probability that $P_LX$ lies outside the polytope using the Markov inequality and union bound.
For any subspace $L$ of dimension $d$ and $\theta\in(0,\pi/2)$, it is well-known that we can find a convex polytope $C$ contained in $\{x\colon \|x\|_2\le r\}$ and containing $\{x\colon \|x\|_2\le r\cos\theta\}$, such that the number of facets is $\frac1{\sin^{d+o(d)}\theta}$ (see for example \cite{boroczky2003covering} or
\cite[Section~4]{liu2023soft}).
Then
\begin{align}
\mathbb{P}\left[\frac{\|P_LX\|_2}{\sigma}\ge r\right]
&\le
\mathbb{P}\left[\frac{P_LX}{\sigma}\notin\mathcal{C}\right]
\\
&\le \sum_{u}\mathbb{P}[\left<\sqrt{d}u,X/\sigma\right>\ge \sqrt{d}r\cos\theta]
\\
&\le \sum_{u}\exp(-\sqrt{d}r\cos\theta)\mathbb{E}\left[\exp\left(\left<\sqrt{d}u,X/\sigma\right>\right)\right]
\\
&\le \frac1{\sin^{d+o(d)}\theta}
\exp(-\sqrt{d}r\cos\theta+d)
\end{align}
where the sum is over outward normal unit vectors of $C$.
With the specific choice of $\cos \theta=\frac1{2}$ we have $\sin\theta=\frac{\sqrt{3}}{2}>e^{-1}$, so that $\mathbb{P}\left[\frac{\|P_LX\|_2}{\sigma}\ge r\right]\le \exp(-\frac{\sqrt{d}r}{2}+2d+o(d))$, and the claim follows by taking the union bound.

Alternatively, a bound on the norm of a sub-Gaussian vector can also be obtained through the argument of 
\citet[Exercise~6.3.5]{vershynin2009high}.
\end{proof}

\begin{rem}
From the proof of Proposition~\ref{thm_union},
we can see that we do need the sub-Gaussianity in Definition~\ref{defn_param} to get vanishing errors in the approximation formula (sub-exponential is not sufficient).
\end{rem}

\begin{lem}\label{lem11}
Fix parameters $\mathcal{P}$, 
and let $\varepsilon_n$ be an arbitrary sequence satisfying 
$\varepsilon_n\le\frac1{e^2\delta}$
and 
$\ln\frac1{\varepsilon_n}=O_{\mathcal{P}}(\ln n)$.
Then for all sufficiently large $n$,
we have 
\begin{align}
\max_{\Delta}\|(P_{\mathcal{A}\cup\Delta}-P_{\mathcal{A}\setminus\Delta})\check{B}^{(j)}_{:j}\|_2
&\le
10\sqrt{c\varepsilon_n n\ln n}\ln \frac1{\varepsilon_n}
\label{e129}
\end{align}
for all $j$ with probability at least $1-n^{-8}$, 
where $\mathcal{A}:=\{l\neq j\colon \chi^{\alpha}_l\neq 0\}$, and the max is over $\Delta\subseteq\{1,\dots,p\}\setminus\{j\}$ of size at most $n\varepsilon_n$.
\end{lem}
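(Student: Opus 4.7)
The plan is to condition on $(A,w)$ (which fixes $\mathcal{A}$ and hence every $P_{\mathcal{A}\cup\Delta}-P_{\mathcal{A}\setminus\Delta}$), apply Proposition~\ref{thm_union} to the conditionally sub-Gaussian vector $\check{B}^{(j)}_{:j}$ supplied by Lemma~\ref{lem9}, and then take a union bound over $j\in\{1,\dots,p\}$.

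Fix $j$. By construction $B^{(j)}_{:j}\sim P_{A_{:j}\mid A_{:\sj}}$ independently of $(A_{:j},w)$, and $\mu_{:j}$ is $A_{:\sj}$-measurable, so the conditional law of $\check{B}^{(j)}_{:j}$ given $(A,w)$ coincides with its conditional law given $A_{:\sj}$. Lemma~\ref{lem9} supplies an event $\mathcal{F}_j$ on $A_{:\sj}$ with $\mathbb{P}[\mathcal{F}_j]\le n^{-10}$ such that on $\mathcal{F}_j^c$ the vector $\check{B}^{(j)}_{:j}$ is centered and $O_{\mathcal{P}}(\log n)$-sub-Gaussian, which by \eqref{e_sg} translates into a conditional sub-Gaussian scale $\sigma=O_{\mathcal{P}}(\sqrt{\log n})$ in the notation of Proposition~\ref{thm_union}. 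Because $\mathcal{A}\setminus\Delta\subseteq\mathcal{A}\cup\Delta$, each difference $P_{\mathcal{A}\cup\Delta}-P_{\mathcal{A}\setminus\Delta}$ is the orthogonal projection onto a subspace $L_\Delta$ of dimension at most $|\Delta|\le n\varepsilon$, and the number of such $\Delta$ (and hence of distinct $L_\Delta$) is bounded by
\[
\sum_{k=0}^{\lfloor n\varepsilon\rfloor}\binom{p-1}{k}\;\le\;\left(\tfrac{ep}{n\varepsilon}\right)^{n\varepsilon}\;\le\;\left(\tfrac{e}{\delta\varepsilon}\right)^{n\varepsilon},
\]
so that $\log|\mathcal{L}|=O_{\mathcal{P}}(n\varepsilon\log(1/\varepsilon))$ in the regime $\varepsilon<e^{-2}$.

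Now apply Proposition~\ref{thm_union} with $d:=\lceil n\varepsilon\rceil$ and the scale $\sigma$ above. Choosing $r$ just large enough so that $\sqrt{d}\,r/2\ge 2d+a_d+\log|\mathcal{L}|+11\log n$ forces the conditional tail bound below $n^{-11}$, and solving for $r$ yields $r=O_{\mathcal{P}}(\sqrt{n\varepsilon}\,\log(1/\varepsilon))$ for $n$ sufficiently large (the $11\log n$ term is of lower order than $n\varepsilon\log(1/\varepsilon)$ for fixed $\varepsilon$). Multiplying by $\sigma$ gives $r\sigma=O_{\mathcal{P}}(\sqrt{n\varepsilon\log n}\,\log(1/\varepsilon))$, which fits inside $10\sqrt{c\varepsilon n\log n}\,\log(1/\varepsilon)$ after absorbing all $\mathcal{P}$-dependent constants into a single $c=c_{\mathcal{P}}$. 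A final union bound over $j\in\{1,\dots,p\}$ accounts for both the Lemma~\ref{lem9} exceptional set $\mathcal{F}_j$ and the Proposition~\ref{thm_union} tail, producing total failure probability at most $2p\,n^{-10}\le n^{-8}$ for large $n$.

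The main obstacle is the bookkeeping of three scale factors that must combine multiplicatively in exactly the form of the target bound: the sub-Gaussian proxy $\sqrt{\log n}$ from Lemma~\ref{lem9}, the dimensional factor $\sqrt{n\varepsilon}$, and the combinatorial factor $\log(1/\varepsilon)$ arising because $\log|\mathcal{L}|/\sqrt{d}$ dominates $\sqrt{d}$ for small $\varepsilon$. The multiplicative structure is clean only because Proposition~\ref{thm_union} is driven by the exponent $\sqrt{d}\,r$ rather than $r^2$, so $r$ scales linearly with (rather than as the square root of) $\log|\mathcal{L}|$; this is what gives the $\log(1/\varepsilon)$ prefactor in the statement instead of $\sqrt{\log(1/\varepsilon)}$. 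The slack in the constant $10$ (and in the implicit $c$) is what allows us to ignore lower-order terms such as the $a_d$ correction in Proposition~\ref{thm_union} and the $11\log n$ slack used to clear the final union bound.
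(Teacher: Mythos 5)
Your proposal follows essentially the same route as the paper: condition on $A$ with $A_{:\sj}$ in the good event from Lemma~\ref{lem9} to get that $\check{B}^{(j)}_{:j}$ is conditionally $O_{\mathcal{P}}(\log n)$-sub-Gaussian, apply Proposition~\ref{thm_union} with $d\approx n\varepsilon$ and a combinatorial count of $\Delta$ that contributes $O_{\mathcal{P}}(n\varepsilon\log(1/\varepsilon))$ to the exponent, choose $r=\Theta(\sqrt{n\varepsilon}\log(1/\varepsilon))$ so the tail beats the entropy term, and union bound over $j$. Your bookkeeping of the subset count (via $\binom{p-1}{k}$ and the parameter $\delta$) is in fact slightly more careful than the paper's, which writes $\binom{n}{n\varepsilon}$ where $\binom{p-1}{\lfloor n\varepsilon\rfloor}$ is meant, but this is a cosmetic difference that does not change the order of the bound.
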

\begin{proof}
For each $j$, let $\mathcal{E}$ be as in Lemma~\ref{lem9}.
Note that $\mathcal{A}$ is a function of $(A,Y)$, and $P_{\mathcal{A}\cup\Delta}-P_{\mathcal{A}\setminus\Delta}$ is a projection onto a subspace of dimension at most $n\varepsilon_n$.
For any $A$ such that $A_{:\sj}\in \mathcal{E}^c$ we have that
$
\check{B}_{:j}^{(j)}$  is $c\ln n$-sub-Gaussian (for some $c$ depending on $\mathcal{P}$),
and using Proposition~\ref{thm_union} we obtain, for any $r>0$,
\begin{align}
\mathbb{P}\left[\left.\max_{\Delta}
\|(P_{\mathcal{A}\cup\Delta}-P_{\mathcal{A}\setminus\Delta})\check{B}^{(j)}_{:j}\|_2\ge r\sqrt{c\ln n}
\right|
A\right]
\le 
L\exp\left(
-\frac{\sqrt{n\varepsilon_n}r}{2}+2n\varepsilon_n+a_{\lfloor n\varepsilon_n\rfloor}
\right),
\label{e135}
\end{align}
where $L$ denotes the number of candidate subspaces, and we have the bound
\[
L \le \sum_{k=0}^{\lfloor n\varepsilon_n\rfloor} \binom{p}{k}
\le (\lfloor n\varepsilon_n\rfloor+1)\binom{p}{\lfloor n\varepsilon_n\rfloor},
\]
where the second inequality holds when $\lfloor n\varepsilon_n\rfloor \le p/2$.
Using the basic property of binomial coefficients, we have ${p\choose \lfloor n\varepsilon_n\rfloor}\le \exp(\delta^{-1}nh(\delta\varepsilon_n))$ where \[
h(\delta\varepsilon_n)
:=
\delta\varepsilon_n \ln\frac{1}{\delta\varepsilon_n}
+
(1-\delta\varepsilon_n)\ln\frac{1}{1-\delta\varepsilon_n}
<
2\delta\varepsilon_n \ln\frac{1}{\delta\varepsilon_n}
\]
where the last step used $\delta\varepsilon_n \le  e^{-2}$.
With $r\leftarrow 10\sqrt{n\varepsilon_n}\ln\frac1{\varepsilon_n}$, 
\eqref{e135} is upper bounded by 
$\exp(-n\varepsilon_n\ln\frac1{\varepsilon_n})$ for sufficiently large $n$.
The claim then follows by combining \eqref{e135} with the bound on $\mathbb{P}[\mathcal{E}]$ in Lemma~\ref{lem9}.
\end{proof}

\begin{rem}\label{rem_4}
In the proof of Lemma~\ref{lem11},
it appears that it is essential that we defined $\mu_{:j}$ as the conditional mean, rather than the best linear estimator $A_{:\sj}\Sigma_{\sj}^{-1}\Sigma_{\sj j}$.
Indeed, if we took $\mu_{:j}:=A_{:\sj}\Sigma_{\sj}^{-1}\Sigma_{\sj j}$,
then the residual $\check{B}^{(j)}_{:j}
:=
B^{(j)}_{:j}
-\mu_{:j}$
would have zero inner product with any \emph{linear} functions of $A_{:\sj}$,
but not with general functions,
so the left side of \eqref{e129} might not be small.
\end{rem}

\subsection{Proof of Theorem~\ref{thm12}}
Define
\begin{align}
\mathcal{I}_3(j,l)
&=
1\{\chi^{\alpha}_l\neq \chi^{\beta^{(j)}}_l\};
\\
\mathcal{I}_4(j)
&:=1\left\{\frac1{n}\sum_l
\mathcal{I}_3(j,l)
>\varepsilon
\right\}.
\end{align}
It follows from Corollary~\ref{cor7} that
\begin{align}
\frac1{p}\sum_{j=1,l}^p\mathbb{E}[\mathcal{I}_3(j,l)]
=
\tilde{O}_{\mathcal{P}}(n^{2/3}).
\end{align}
Then	 (writing $\bar{\mathcal{I}}:=1-\mathcal{I}$), 
\begin{align}
\mathbb{P}
\left[
\frac1{p}\sum_j
\mathcal{I}_4(j)
>\delta_1
\right]
&\le 
\frac1{p\delta_1}
\sum_j
\mathbb{P}\left[\frac1{n}\sum_l
\mathcal{I}_3(j,l)
>\varepsilon
\right]
\\
&\le 
\frac1{\delta_1\varepsilon pn}
\sum_{j,l}
\mathbb{E}[\mathcal{I}_3(j,l)]
\\
&\le
\tilde{O}_{\mathcal{P}}(p^{-1/3}\delta_1^{-1}\varepsilon^{-1})
\label{eqq118}
\end{align}
where $\varepsilon$ and $\delta_1$ will be optimized later (see \eqref{e_opt_d1}).
Let $c$ be as in Corollary~\ref{cor7}, 
and let
\begin{align}
\mathcal{I}_8
:=
1\{
\|A\|_{\rm op}> \sqrt{n}/c
\textrm{ 
or \eqref{e129} fails for some $j$}\}.
\end{align} 
Applying Theorem~\ref{thm_error} with $D=\frac1{c}$ and $\Gamma=10\sqrt{c\ln n}\ln \frac1{\varepsilon}$, 
we see that under the event $\mathcal{I}_4(j)=\mathcal{I}_8=0$,
\begin{align}
|t(j,B^{(j)},Y)-t_j|
\le 
10\sqrt{\frac{\ln n}{c}}
\ln \frac1{\varepsilon}
\cdot
\sqrt{\varepsilon}(|\hat{\alpha}_j|+|\hat{\beta}^{(j)}_j|).
\label{e136}
\end{align}

To finish the proof, we will control the right side of \eqref{e136}.
Let 
\begin{align}
\mathcal{I}_6(j)
:=1\{\max\{
|\hat{\alpha}_j|,
|\hat{\beta}^{(j)}_j|
\}>\delta_3^{-1/2}\},
\end{align} 
and next we will show that $\mathcal{I}_6(j)=0$ for most $j$. 
Using \eqref{e_114}, \eqref{e115} and the Markov inequality, 
we have that for any $\delta_3>0$,
\begin{align}\max\left\{\frac1{p}\sum_{j=1}^p
\mathbb{P}[|\hat{\alpha}_j|>\delta_3^{-1/2}]
,\,
\frac1{p}\sum_{j=1}^p
\mathbb{P}[|\hat{\beta}^{(j)}_j|>\delta_3^{-1/2}]
\right\}
=\tilde{O}_{\mathcal{P}}(\delta_3)
\end{align}
and 
\begin{align}
\mathbb{P}[\frac1{p}\sum_j\mathcal{I}_6(j)>\delta_1]
=\tilde{O}_{\mathcal{P}}(\delta_3/\delta_1).
\label{e138}
\end{align}
Returning to \eqref{e136}, 
we see that under the event $\mathcal{I}_4(j)
=\mathcal{I}_8
=\mathcal{I}_6(j)
=0$ we have 
\begin{align}
|t(j,B^{(j)},Y)-t_j|
=\tilde{O}_{\mathcal{P}}
\left(\sqrt{\frac{\varepsilon}{\delta_3}}\ln \frac1{\varepsilon}
\right).
\label{e143}
\end{align}
From \eqref{eqq118}, \eqref{e138}
and Lemma~\ref{lem11}, 
we have
\begin{align}
\mathbb{P}[\frac1{p}\sum_j\mathcal{I}_4(j)
\vee \mathcal{I}_6(j)>2\delta_1
\textrm{ or }\mathcal{I}_8]
=\tilde{O}_{\mathcal{P}}(p^{-1/3}\delta_1^{-1}\varepsilon^{-1})
+
\tilde{O}_{\mathcal{P}}(\delta_3/\delta_1)
+n^{-8}.
\label{e144}
\end{align}
The values of $\delta_1,\delta_3$, and $\varepsilon$ can be optimized;
here we specify them in a way such that the Levy-Prokhorov error in 
Corollary~\ref{cor_lp} will be minimized.
Thus we will set \eqref{e143}, $2\delta_1$, and the right side of \eqref{e144} to be the same order (up to log factors), which gives 
\begin{align}\sqrt{\frac{\varepsilon}{\delta_3}}
=\delta_1
=p^{-1/3}\delta_1^{-1}\varepsilon^{-1}
=\frac{\delta_3}{\delta_1},
\label{e_opt_d1}
\end{align}
yielding $\delta_1=p^{-1/18}$, 
$\varepsilon=p^{-2/9}$,
and $\delta_3=p^{-1/9}$.
which yields the claimed convergence rate.

It remains to show the claim about further simplification when $B^{(j)}_{:j}$ and $A_{:j}$ are independent of $A_{:\sj}$.
Recall that for each $j$, $(I-P_j)A_{:j}$ is a function of $(A,Y)$, 
and $\check{B}^{(j)}_{:j}$ is $\tilde{O}_{\mathcal{P}}(1)$-sub-Gaussian conditioned on a set $\mathcal{G}_j$ of $A$ with probability $1-O(n^{-10})$ (by Lemma~\ref{lem9}).
Therefore conditioned on $G_j$ and $\mathcal{I}_8$,
$
\frac{1}{n}\check{B}^{(j)\top}_{:j}(I-P_j)A_{:j}
$
is sub-Gaussian with variance proxy $\tilde{O}_{\mathcal{P}}(\|(I-P_j)A_{:j}\|_2^2/n^2)=\tilde{O}_{\mathcal{P}}(\frac1{n})$.
Hence conditioned on $\mathcal{G}_j$ and $\mathcal{I}_8$,
$
|\frac{1}{n}\check{B}^{(j)\top}_{:j}(I-P_j)A_{:j}|
\le 
\tilde{O}_{\mathcal{P}}(\frac1{\sqrt{n}})
$ with probability at least $1-n^{-20}$.
Therefore 
\begin{align}
|\frac{1}{n}\check{B}^{(j)\top}_{:j}(I-P_j)A_{:j}|
\le 
\tilde{O}_{\mathcal{P}}(\frac1{\sqrt{n}}),
\quad \forall j\in\{1,\dots,p\}
\end{align}
with probability at least $1-pn^{-20}-\sum_{j=1}^p\mathbb{P}[\mathcal{G}_j^c]-\mathbb{E}[\mathcal{I}_8]\ge 1-\tilde{O}_{\mathcal{P}}(n^{-1/18})$.
Then
\begin{align}
|\frac1{n}\check{B}^{(j)\top}_{:j}(I-P_j)A_{:j}
\hat{\alpha}_j|=\tilde{O}_{\mathcal{P}}((n\delta_3)^{-1/2}),
\quad 
\forall  j\colon \mathcal{I}_6(j)=0
\end{align}
with probability at least $1-\tilde{O}_{\mathcal{P}}(n^{-1/18})$.
Thus the $\tilde{O}_{\mathcal{P}}(n^{-1/18})$ rate bound also holds when the term 
$\check{B}^{(j)\top}_{:j}(I-P_j)A_{:j}\hat{\alpha}_j$ is dropped from the definition of $t_j$ in Theorem~\ref{thm12}.


\section{The Case of non-vanishing $\mathbb{E}[|\check{B}^{(j)}_{1j}|^2]$}
\label{sec_nonvanish}

In this section we prove Theorem~\ref{thm_lbdd},
which
is essentially based on showing that the denominator $\frac1{n}\check{B}_{:j}^{\top}(I-P_j)B_{:j}$ is bounded below.

\subsection{Auxiliary lemmas}

\begin{lem}\label{lem12}
Assume that condition $\mathcal{P}$ holds and that 
\begin{align}
\mathbb{E}[|\check{B}^{(j)}_{1j}|^2]
\ge \kappa_2^{-1},
\quad
\forall j=1,\dots, p,
\label{e_lb1}
\end{align}
for some $\kappa_2>0$ independent of $n$.
There exists $c>0$ such that the following holds: For all $n\ge 1/c$
and $j\in\{1,\dots,n\}$,
with probability at least $1-\exp(-cn)$ we have
\begin{align}
\frac1{n}\inf_P
\check{B}^{(j)\top}_{:j}
(I-P)
\check{B}^{(j)}_{:j}\ge c^2,
\end{align}
where the infimum is over projection $P$ onto the span of no more than $cn-1$ columns of $A$ excluding the $j$-th column.
\end{lem}

\begin{proof}
Let us assume without loss of generality that $j=1$.
The assumption of the optimality of the linear prediction implies that 
the covariance matrix of
\begin{align}
[
\check{B}^{(1)}_{11},
A_{12},\dots,
A_{1p}]
\label{e_newsg}
\end{align}
is a block-diagonal matrix with diagonal blocks $\mathbb{E}[|\check{B}^{(1)}_{11}|^2]
\in (\kappa_2^{-1},\kappa_1)$, and $\Sigma_{\setminus 1}$. 
In particular, the max and min eigenvalues of such a covariance matrix is bounded between $\kappa_1$
and 
$ (\kappa_1\vee \kappa_2)^{-1}$.
Moreover, note that both $A_{1:}$ and $\mu_{11}$ are $C_{\rm sg}$-sub-Gaussian (the latter can be seen by Jensen's inequality).
Therefore using Cauchy-Schwarz we see that \eqref{e_newsg} is $4C_{\rm sg}$-sub-Gaussian. 
Then we establish the claim by following the same proof as  Lemma~\ref{lem6},
but replacing $A$ with 
$[
\check{B}^{(1)}_{:1},
A_{:2},\dots,
A_{:p}]$.
\end{proof}

\begin{lem}\label{lem10}
Assuming $\mathcal{P}$  and 
\eqref{e_lb1},
and 
$\lambda$ large enough (i.e.\ satisfying \eqref{e_lam}),
there exists $c_1>0$ (depending on $\mathcal{P}$) such that with probability at least $1-O(n^{-10})$, we have 
\begin{align}
\frac1{n}\check{B}_{:j}^{(j)\top}(I-P_j)B^{(j)}_{:j}>c_1,
\quad \forall j=1,\dots,p,
\label{e128}
\end{align} 
for large enough (depending on $\mathcal{P}$ and $\lambda$) $n$.
\end{lem}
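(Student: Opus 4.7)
The plan is to decompose $B^{(j)}_{:j}=\check{B}^{(j)}_{:j}+\mu_{:j}$, which yields
\[
\frac1{n}\check{B}^{(j)\top}_{:j}(I-P_j)B^{(j)}_{:j}
=\frac1{n}\check{B}^{(j)\top}_{:j}(I-P_j)\check{B}^{(j)}_{:j}
+\frac1{n}\check{B}^{(j)\top}_{:j}(I-P_j)\mu_{:j}.
\]
I would lower bound the first (quadratic) term by a positive constant using Lemma~\ref{lem12}, and show the second (cross) term is negligible by sub-Gaussian concentration. Both bounds should hold uniformly in $j$ on an event of probability $1-O(n^{-10})$, so that choosing $c_1:=c^2/2$ (with $c$ the constant from Lemma~\ref{lem12}) completes the argument.

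For the quadratic term, note that $P_j=P_{\mathcal{A}}$ is a projection onto at most $K:=\|\chi^{\alpha}\|_0$ columns of $A$ excluding the $j$-th. Proposition~\ref{prop_kbound} together with concentration of $\|A\|_{\rm op}$ (bounded by $\sqrt{n}/c$ on $\mathcal{E}_0^c$ from the proof of Corollary~\ref{cor7}), the chi-squared concentration of $\|w\|_2^2$, and the bound $\|\alpha\|_2^2\le pM_2$ show that when $\lambda$ satisfies \eqref{e_lam} we have $K\le cn-1$ with probability $1-\exp(-c'n)$ for some $c'>0$ depending only on $\mathcal{P}$. Lemma~\ref{lem12} then gives $\frac1{n}\check{B}^{(j)\top}_{:j}(I-P_j)\check{B}^{(j)}_{:j}\ge c^2$, and a union bound over $j$ establishes this uniformly with probability at least $1-p\exp(-c'n)$.

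For the cross term, the key observation is that because $B^{(j)}_{:j}$ is resampled independently of $(A_{:j},w)$ given $A_{:\setminus j}$, the conditional distribution of $\check{B}^{(j)}_{:j}$ given $(A,Y)$ coincides with its conditional distribution given $A_{:\setminus j}$. By Lemma~\ref{lem9}, applied with a slightly smaller cutoff probability so that a union bound over $j$ preserves $O(n^{-10})$, this distribution is $O_{\mathcal{P}}(\log n)$-sub-Gaussian on a set of $A_{:\setminus j}$ of probability $1-O(n^{-11})$. Moreover, each entry $\mu_{ij}$ is sub-Gaussian by Jensen (cf.\ \eqref{e131}), so sub-exponential concentration yields $\|\mu_{:j}\|_2\le D\sqrt{n}$ for some $D=O_{\mathcal{P}}(1)$ with probability $1-\exp(-c''n)$. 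Treating $(I-P_j)\mu_{:j}$ as fixed given $(A,Y)$ and using $\|(I-P_j)\mu_{:j}\|_2\le \|\mu_{:j}\|_2\le D\sqrt{n}$, the scalar $\check{B}^{(j)\top}_{:j}(I-P_j)\mu_{:j}$ is sub-Gaussian with variance proxy $O_{\mathcal{P}}(n\log n)$; a Chernoff bound with $t=\sqrt{n}\log^2 n$ plus a union bound over $j$ give $\frac1{n}|\check{B}^{(j)\top}_{:j}(I-P_j)\mu_{:j}|=\tilde O_{\mathcal{P}}(n^{-1/2})$ uniformly in $j$ with probability $1-O(n^{-10})$.

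The main obstacle is coordinating the three layers of conditioning---on $A_{:\setminus j}$ (to invoke Lemma~\ref{lem9}), on $(A,Y)$ (to fix $P_j$ and $\mu_{:j}$), and on the independent resampling (to apply concentration to $\check{B}^{(j)}_{:j}$)---while verifying that the associated exceptional events all have total probability $O(n^{-10})$ after union bounding over $j$. This is a bookkeeping exercise rather than a new technical difficulty: once the cutoff probability in Lemma~\ref{lem9} is tightened and the Chernoff cutoff $t$ is tuned accordingly, the two estimates combine to give $\frac1{n}\check{B}^{(j)\top}_{:j}(I-P_j)B^{(j)}_{:j}\ge c^2-o(1)\ge c_1$ for $c_1:=c^2/2$ and sufficiently large $n$, uniformly in $j$.
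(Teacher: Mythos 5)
Your proposal follows essentially the same route as the paper's proof: decompose $B^{(j)}_{:j}=\check{B}^{(j)}_{:j}+\mu_{:j}$, lower bound the quadratic term via Lemma~\ref{lem12} after controlling the rank of $P_j$ (using Proposition~\ref{prop_kbound} and the concentration already established in the proof of Corollary~\ref{cor7}), and show the cross term is $\tilde{O}_{\mathcal{P}}(n^{-1/2})$ by sub-Gaussian concentration conditioned on $A_{:\setminus j}$ via Lemma~\ref{lem9}, finally taking $c_1=c^2/2$. Your explicit remark about tightening the cutoff in Lemma~\ref{lem9} to survive the union bound over $j$ is a valid (and slightly more careful) bookkeeping observation, but the substance of the argument is identical.
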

\begin{proof}
Let $c>0$ be as in Corollary~\ref{cor7}.
As shown in \eqref{e_Kbound},
with probability at least $1-3e^{-cn}$, $P_j$ is a projection onto a subspace of dimension at most $cn-1$, 
and hence 
\begin{align}
\frac1{n}\check{B}_{:j}^{(j)\top}(I-P_j)\check{B}^{(j)}_{:j}>c^2
\label{e_bcb}
\end{align}
by Lemma~\ref{lem12}
(note that $B^{(j)}$ and $A$ have the same distribution).
Moreover, let $\mathcal{E}$ be as in Lemma~\ref{lem9}.
Conditioned on any $A_{:\sj}\in \mathcal{E}^c$ we have that
$
\frac1{n}\check{B}_{:j}^{(j)\top}(I-P_j)\mu_{:j}
$  is $O_{\mathcal{P}}(\frac1{n^2}\|\mu_{:j}\|_2^2\ln n)$-sub-Gaussian and, since $\mu_{:j}$ is $C_{\rm sg}$-sub-Gaussian (which follows by sub-Gaussianity of $B^{(j)}_{:j}$ and Jensen's inequality), we have $\|\mu_{:j}\|_2^2=O_{\mathcal{P}}(n)$ with probability at least $1-e^{-n}$
\citep[Exercise~6.3.5]{vershynin2009high}.
It follows that $
\frac1{n}\check{B}_{:j}^{(j)\top}(I-P_j)\mu_{:j}
$  is $O_{\mathcal{P}}(\frac1{n}\ln n)$-sub-Gaussian conditioned on $A_{:\sj}$ in a set of probability at least $1-\mathbb{P}[\mathcal{E}]
-e^{-n}
=
1-n^{-10}
-e^{-n}$.
Then,
without conditioning on $A_{:\sj}$, we have $
\frac1{n}\check{B}_{:j}^{(j)\top}(I-P_j)\mu_{:j}
=O_{\mathcal{P}}(\sqrt{\frac1{n}\ln n})$ with probability at least $1-O(n^{-10})$.
Then the claim follows from 
\eqref{e_bcb},
$B^{(j)}_{:j}=\check{B}^{(j)}_{:j}+\mu_{:j}$,
and $c_1:=c^2/2<c^2-O_{\mathcal{P}}(\sqrt{\frac1{n}\ln n})$ for large $n$.
\end{proof}

\subsection{Proof of Theorem~\ref{thm_lbdd}}
Recall that 
in \eqref{e143} and with the choice of parameters in \eqref{e_opt_d1},
we established that 
\begin{align}
|\frac1{n}
\check{B}_{:j}^{(j)\top}(I-P^{(j)})
B^{(j)}_{:j}\hat{\beta}^{(j)U}_j-\frac1{n}\check{B}_{:j}^{(j)\top}R
-
\frac1{n}\check{B}^{(j)\top}_{:j}(I-P_j)A_{:j}
\hat{\alpha}_j|
=\tilde{O}_{\mathcal{P}}
\left(n^{-1/18}
\right)
\label{e155}
\end{align}
for all $j$ except for a set of size at most $2n^{-1/18}p$,
under the event 
\begin{align}
\frac1{p}\sum_j\mathcal{I}_4(j)
\vee \mathcal{I}_6(j)\le 2\delta_1
\textrm{ and }\mathcal{I}_8=0
\label{e_event}
\end{align}
which has probability $1-\tilde{O}_{\mathcal{P}}(n^{-1/18})$.
Now by Lemma~\ref{lem9},
there exists a set $\mathcal{E}$ of $A_{:\sj}$,
$\mathbb{P}[\mathcal{E}]\le n^{-10}$, such that conditioned on any $(A,Y)$ satisfying $A_{:\sj}\in \mathcal{E}^c$,
$
\frac1{n}\check{B}_{:j}^{(j)\top}R
$
is sub-Gaussian with variance proxy
$O_{\mathcal{P}}(\frac1{n^2}\|R\|_2^2\ln n)\le O_{\mathcal{P}}(\frac1{n^2}\|Y\|_2^2\ln n)$.
Thus conditioned on any $A_{:\sj}\in \mathcal{E}^c$,
$
\frac1{n}\check{B}_{:j}^{(j)\top}R
=O_{\mathcal{P}}(\ln n)
$ with probability $1-e^{-n}$.
Thus unconditionally,
\begin{align}
\frac1{n}\check{B}_{:j}^{(j)\top}R
=O_{\mathcal{P}}(\ln n)
\label{e152}
 \end{align}
with probability $1-O(n^{-10})$.
Under the event of \eqref{e_event} and for the same set of $j$,
by a similar argument,
we have $\|\frac1{n}(I-P_j)A_{:j}\hat{\alpha}_j\|_2
\le \frac1{n}\|A_{:j}\|_2\delta_3^{-1/2}=O_{\mathcal{P}}(\frac1{\sqrt{n}}\delta_3^{-1/2})$ and hence
\begin{align}
\frac1{n}\check{B}^{(j)\top}_{:j}(I-P_j)A_{:j}
\hat{\alpha}_j
=\tilde{O}_{\mathcal{P}}(\frac1{\sqrt{n}}\delta_3^{-1/2})
\label{e153}
\end{align}
with probability $1-O(n^{-10})$.
Moreover, 
from \eqref{e_event} 
and Lemma~\ref{lem11}
we see that with probability $1-\tilde{O}_{\mathcal{P}}(n^{-1/18})$,
\begin{align}
\frac1{n}
\check{B}_{:j}^{(j)\top}(P^{(j)}-P_j)
B^{(j)}_{:j}=o_{\mathcal{P}}(1)
\end{align}
for all $j$ except for a set of size $\tilde{O}_{\mathcal{P}}(pn^{-1/18})$.
Then using \eqref{e128} we have that with probability $1-\tilde{O}_{\mathcal{P}}(n^{-1/18})$,
\begin{align}
\frac1{n}
\check{B}_{:j}^{(j)\top}(I-P^{(j)})
B^{(j)}_{:j}
=
\frac1{n}
\check{B}_{:j}^{(j)\top}(I-P_j)
B^{(j)}_{:j}
-o_{\mathcal{P}}(1)
=\Omega_{\mathcal{P}}(1)
\end{align}
for all $j$ except for a set of size $\tilde{O}_{\mathcal{P}}(pn^{-1/18})$.
Thus under 
\eqref{e_event}, \eqref{e152} and 
\eqref{e153},
from 
\eqref{e155} we see that 
\begin{align}
\hat{\beta}^{(j)U}_j
=O_{\mathcal{P}}(\ln n)
\end{align}
and hence
\begin{align}
&\quad|\frac1{n}
\check{B}_{:j}^{(j)\top}(I-P_j)
B^{(j)}_{:j}\hat{\beta}^{(j)U}_j-\frac1{n}\check{B}_{:j}^{(j)\top}R
-
\frac1{n}\check{B}^{(j)\top}_{:j}(I-P_j)A_{:j}
\hat{\alpha}_j|
\nonumber\\
&\le 
\tilde{O}_{\mathcal{P}}
\left(n^{-1/18}
\right)
+
\left|\frac1{n}
\check{B}_{:j}^{(j)\top}(P^{(j)}-P_j)
B^{(j)}_{:j}\hat{\beta}^{(j)U}_j\right|
\\
&\le \tilde{O}_{\mathcal{P}}
\left(n^{-1/18}
\right).
\end{align}
The claim then follows by dividing both sides by $\frac1{n}
\check{B}_{:j}^{(j)\top}(I-P_j)
B^{(j)}_{:j}$,
and noting that \eqref{e_event}, \eqref{e152} and 
\eqref{e153} hold simultaneously with probability at least $1-\tilde{O}_{\mathcal{P}}
\left(n^{-1/18}\right)$.

\section{Gaussian designs: Proof of Theorem~\ref{thm_gaussian}}
\label{sec_gaussian}
In this section we 
prove 
that our definition of the generalized debiased estimators 
$\hat{\beta}^{U(j)}$, $\hat{\alpha}^U$
asymptotically matches the traditional debiased estimators
$\hat{\beta}^{u(j)}$, $\hat{\alpha}^u$ in the case of Gaussian designs.
Recall \ eqref{eqq118} with parameters chosen in \eqref{e_opt_d1} states that 
\begin{align}
\mathbb{P}
\left[
\frac1{p}\sum_j
\mathcal{I}_4(j)
>p^{-1/18}
\right]
\le \tilde{O}_{\mathcal{P}}(p^{-1/18}).
\end{align}
Thus with probability at least $1-\tilde{O}_{\mathcal{P}}(p^{-1/18})$,
there is a set of $j$ of cardinality at least $p(1-p^{-1/18})$, for which 
\begin{align}
\frac1{n}\sum_l
1\{\chi_l^{\alpha}
\neq
\chi_l^{\beta^{(j)}}
 \}
&\le 
p^{-2/9},
\label{e157}
\end{align}
and hence by Lemma~\ref{lem11},
\begin{align}
\max_{\Delta}\|(P_{\mathcal{A}}-P_{\mathcal{B}})\check{B}^{(j)}_{:j}\|_2
&\le
\tilde{O}_{\mathcal{P}}
(p^{\frac{7}{18}}).
\label{e158}
\end{align}
Now in the Gaussian case,
$\check{B}_{:j}$ is independent of the rest of the columns,
and 
$\mu_{:j}=A_{:\setminus j}\Sigma_{\setminus j}^{-1}\Sigma_{\sj j}$.
We have 
\begin{align}
\frac1{n}\check{B}_{:j}^{\top}(I-P_{\mathcal{B}})B_{:j}
&=\frac1{n}\check{B}_{:j}^{\top}(I-P_{\mathcal{B}})(\check{B}_{:j}+A_{:\setminus j}\Sigma_{\setminus j}^{-1}\Sigma_{\sj j}).
\end{align}
Then 
we see that with probability  $1-\tilde{O}_{\mathcal{P}}(p^{-1/18})$,
\begin{align}
\frac1{n}\check{B}_{:j}^{\top}(I-P_{\mathcal{A}})(\check{B}_{:j}+A_{:\setminus j}\Sigma_{\setminus j}^{-1}\Sigma_{\sj j})
&=\frac1{n}\check{B}_{:j}^{\top}(I-P_{\mathcal{A}})\check{B}_{:j}+O_{\mathcal{P}}(\frac1{\sqrt{n}})
\label{eqq11}
\\
&=\Sigma_{j|\sj}(1-\frac1{n}\|\hat{\alpha}\|_0)
+O_{\mathcal{P}}(n^{-2/9})
\label{e161}
\\
&=\Sigma_{j|\sj}(1-\frac1{n}\|\hat{\beta}^{(j)}\|_0)
+O_{\mathcal{P}}(n^{-2/9})
\label{e162}
\end{align}
where \eqref{eqq11} follows since with probability $1-e^{-n}$,
$(I-P_{\mathcal{A}})A_{:\setminus j}\Sigma_{\setminus j}^{-1}\Sigma_{\sj j}
$ 
has norm $O_{\mathcal{P}}(\sqrt{n})$,
and $\check{B}_{:j}$ is
independent of it with norm $O_{\mathcal{P}}(\sqrt{n})$.
\eqref{e161} follows from 
concentration of the Chi-square distribution and the independence of $P_{\mathcal{A}}$ and $\check{B}_{:j}$.
\eqref{e162} follows from 
\eqref{e157}.
Moreover, with probability  $1-\tilde{O}_{\mathcal{P}}(p^{-1/18})$,
we also have the error bound
\begin{align}
&\quad\frac1{n}|\check{B}_{:j}^{\top}(P_{\mathcal{A}}-P_{\mathcal{B}})(\check{B}_{:j}+A_{:\setminus j}\Sigma_{\setminus j}^{-1}\Sigma_{\sj j})|
\nonumber\\
&\le
\frac1{n}|\check{B}_{:j}^{\top}(P_{\mathcal{A}}-P_{\mathcal{B}})\check{B}_{:j}|
+
\frac1{n}\|\check{B}_{:j}^{\top}(P_{\mathcal{A}}-P_{\mathcal{B}})\|_2
\|A_{:\setminus j}\Sigma_{\setminus j}^{-1}\Sigma_{\sj j}\|_2
\\
&= 
\tilde{O}_{\mathcal{P}}(n^{-2/9})
+
\frac1{n}\cdot\tilde{O}_{\mathcal{P}}(n^{7/18})
\cdot O_{\mathcal{P}}(\sqrt{n})
\\
&=\tilde{O}_{\mathcal{P}}
(n^{-1/9})
\label{e140.1}
\end{align}
where we used \eqref{e158}.
This establishes $\frac1{n}\check{B}_{:j}^{\top}(I-P_{\mathcal{B}})B_{:j}
=
\Sigma_{j|\sj}(1-\|\hat{\beta}^{(j)}\|_0/n)
+O_{\mathcal{P}}(n^{-1/9})$.
Moreover, from Lemma~\ref{lem10} and \eqref{e140.1},
there exists $\lambda_{\mathcal{P}}$ such that for all $\lambda \ge\lambda_{\mathcal{P}}$,
with probability
at least $1-\tilde{O}_{\mathcal{P}}(p^{-1/18})$,
set of $j$ of cardinality at least $p(1-p^{-1/18})$,
the denominator $\frac1{n}\check{B}^{\top}_{:j}(I-P^{(j)})B_{:j}$ is bounded away from 0,
which implies $|\hat{\beta}^{U(j)}_j
-\hat{\beta}^{u(j)}_j|
=O_{\mathcal{P}}(n^{-1/9})$.

We now turn to $\hat{\alpha}^U$, 
whose proof is similar to the above.
With probability at least $1-\tilde{O}_{\mathcal{P}}(p^{-1/18})$,
there is a set of $j$ of cardinality at least $p(1-p^{-1/18})$, 
for which \eqref{e157} holds,
and 
\begin{align}
\frac1{n}\check{A}_{:j}^{\top}(I-P_{\mathcal{B}})(\check{A}_{:j}+A_{:\setminus j}\Sigma_{\setminus j}^{-1}\Sigma_{\sj j})
&=
\Sigma_{j|\sj}(1-\frac1{n}\|\hat{\alpha}\|_0)
+O_{\mathcal{P}}(n^{-2/9})
\label{e141.1}
\end{align}
by steps similar to \eqref{e162}.
Now 
Let $\mathcal{I}_1(j)$ be the probability that \eqref{e129} is not satisfied, which, by symmetry, is also the probability that
\begin{align}
\max_{\Delta}\|(P_{\mathcal{B}\cup\Delta}-P_{\mathcal{B}\setminus\Delta})\check{A}^{(j)}_{:j}\|_2
&\le
\tilde{O}_{\mathcal{P}}(p^{\frac{7}{18}})
\label{e141}
\end{align}
does not hold, 
where the max is over the same collection of $\Delta$ as in \eqref{e158}.
By Lemma~\ref{lem11}, $\sum_{j=1}^p\mathcal{I}_1(j)\le n^{-8}p$.
By Markov's inequality, 
with probability at least $1-n^{-4}$, 
\eqref{e141} is violated only by a set of $j$ of size at most $n^{-4}p$.
Then by \eqref{e157}, \eqref{e141}, and the union bound,  
we have 
\begin{align}
\max_{\Delta}\|(P_{\mathcal{B}}-P_{\mathcal{A}})\check{A}^{(j)}_{:j}\|_2
&\le
\tilde{O}_{\mathcal{P}}(p^{\frac{7}{18}})
\label{e142}
\end{align}
for all $j$ in a good set of size at least $p(1-p^{-1/18}-n^{-4})$, 
with probability at least $1-\tilde{O}_{\mathcal{P}}(p^{-1/18})$,
and hence
\begin{align}
&\quad\frac1{n}|\check{A}_{:j}^{\top}(P_{\mathcal{A}}-P_{\mathcal{B}})A_{:j}|
\nonumber\\
&\le
\frac1{n}|\check{A}_{:j}^{\top}(P_{\mathcal{A}}-P_{\mathcal{B}})\check{A}_{:j}|
+
\frac1{n}\|\check{A}_{:j}^{\top}(P_{\mathcal{A}}-P_{\mathcal{B}})\|_2
\|A_{:\setminus j}\Sigma_{\setminus j}^{-1}\Sigma_{\sj j}\|_2
\\
&= 
\tilde{O}_{\mathcal{P}}(n^{-2/9})
+
\frac1{n}\cdot\tilde{O}_{\mathcal{P}}(n^{7/18})
\cdot O_{\mathcal{P}}(\sqrt{n})
\\
&=\tilde{O}_{\mathcal{P}}
(n^{-1/9})
\label{e146}
\end{align}
where we used \eqref{e142}.
Then $\frac1{n}\check{A}_{:j}^{\top}(I-P_{\mathcal{A}})A_{:j}
=
\Sigma_{j|\sj}(1-\|\hat{\alpha}\|_0/n)
+O_{\mathcal{P}}(n^{-1/9})$ follows by \eqref{e141.1} and \eqref{e146} (the cardinality estimate $p(1-3p^{-1/18})$ arises from taking the union bound for sets of $j$).
Finally, from the proof of Lemma~\ref{lem10},
there exists $\lambda_{\mathcal{P}}$ such that for all $\lambda \ge\lambda_{\mathcal{P}}$,
with high probability $1-\|\hat{\alpha}\|_0/n$ is bounded below and away from 0,
and hence by \eqref{e146},
the denominator $\Sigma_{j|\sj}(1-\frac1{n}\|\hat{\alpha}\|_0)$ in the definition of $\hat{\alpha}^u_j$ is bounded away from 0,
which implies $|\hat{\alpha}^U_j
-\hat{\alpha}^u_j|
=O_{\mathcal{P}}(n^{-1/9})$.

\section{Proof of matrix continuity}
\begin{lem}\label{lem16}
The map \eqref{e_gmap} to the set of positive semidefinite matrices,
initially defined on $(0,+\infty)^{p-1}$,
has a unique continuous extension to $[0,+\infty]^{p-1}$.
\end{lem}

\begin{proof}
Let us assume without loss of generality that we have convergence $a\to a^*\in[0,+\infty]^{p-1}$, where 
$a\in (0,+\infty)^{p-1}$, and $a^*_i=+\infty$ iff $i\in \{l+1,\dots, p-1\}$ for some $l$. 
Let 
$D_1\in\mathbb{R}^{l\times l}$ and $D_2\in\mathbb{R}^{(p-1-l)\times (p-1-l)}$ be the diagonal matrices such that
\[
D=
\begin{pmatrix}
D_1 & 0 \\
0 & D_2
\end{pmatrix}
=\diag(a),
\]
and let $D,D_1^*,D_2^*$ be the corresponding limits as $a\to a^*$.
Denote by $L\in\mathbb{R}^{n\times l}$,
$F\in\mathbb{R}^{n\times (p-1-l)}$, such that 
\begin{align}
A_{:\sj}
=(L,F).
\end{align}
By the Schur complement theorem,
\begin{align}
\left[
\begin{pmatrix}
L^{\top}L & L^{\top}F
\\
F^{\top}L & F^{\top}F
\end{pmatrix}
+D
\right]^{-1}
=
\begin{pmatrix}
(L^{\top}L+D_1
-L^{\top}F(F^{\top}F+D_2)^{-1}F^{\top}L)^{-1}
& *\\
* & *
\end{pmatrix}
.
\end{align}
Define
\begin{align}
H:=
L(L^{\top}L+D_1
-L^{\top}F(F^{\top}F+D_2)^{-1}F^{\top}L)^{-1}
L^{\top}.
\end{align}
By the Schur complement theorem,
\begin{align}
A_{:\sj}(A_{:\sj}^{\top}A_{:\sj}+D)^{-1}
A_{:\sj}^{\top}
&=
H
-F(F^{\top} F+D_2)^{-1}F^{\top}H
-HF(F^{\top} F+D_2)^{-1}F^{\top}
\nonumber\\
&\quad+F[(F^{\top} F+D_2)^{-1}+
(F^{\top} F+D_2)^{-1}
F^{\top}HF(F^{\top} F+D_2)^{-1}]F^{\top}
\label{es190}
\end{align}
which converges to $\lim_{a\to a^*}H$ if the latter exists.

It remains to prove the existence of $\lim_{a\to a^*}H$.
Note that $L^{\top}L+D_1
-L^{\top}F(F^{\top}F+D_2)^{-1}F^{\top}L\to L^{\top}L+D_1^*$
as $a\to a^*$, which may be singular, but is non-singular when restricted to the column space $\col(L^{\top})$,
suggesting that $H$ may still have a limit.
To prove this,
choose any orthogonal matrix $E=\begin{pmatrix}
E_1 \\ E_2
\end{pmatrix}\in\mathbb{R}^{l\times l}$,
such that $\col(E_1^{\top})= \col(L^{\top})$.
Define 
\begin{align}
\Phi&:=
\begin{pmatrix}
E_1\\
E_2
\end{pmatrix}
[L^{\top}L+D_1
-L^{\top}F(F^{\top}F+D_2)^{-1}F^{\top}L]^{-1}
(E_1^{\top} E_2^{\top})
\\
&=
\begin{pmatrix}
E_1L^{\top}MLE_1^{\top}
+E_1D_1E_1^{\top}
&
E_1D_1E_2^{\top}
\\
E_2D_1E_1^{\top}
&
E_2D_1E_2^{\top}
\end{pmatrix}
^{-1}
\end{align}
where we defined 
\begin{align}
M:=I-F(F^{\top}F+D_2)^{-1}F^{\top}
\end{align}
which converges to $I$ since the diagonal coordinates in $D_2$ tend to $+\infty$.
Now $H=LE_1^{\top}\Phi_{11}E_1L^{\top}$,
where 
\begin{align}
\Phi_{11}
=(E_1L^{\top}MLE_1^{\top}
+E_1D_1E_1^{\top}
+
E_1D_1E_2^{\top}
(E_2D_1E_2^{\top})^{-1}
E_2D_1E_1^{\top})^{-1}
\end{align}
denotes the upper left block of $\Phi$.
Since $E_1L^{\top}LE_1^{\top}$ is invertible, we see that existence of $\lim_{a\to a^*}H$ would follow from existence of $\lim_{a\to a^*}D_1E_2^{\top}
(E_2D_1E_2^{\top})^{-1}
E_2D_1$.

It remains to prove the existence of $\lim_{a\to a^*}D_1E_2^{\top}
(E_2D_1E_2^{\top})^{-1}
E_2D_1$.
Assume without loss of generality that $D_1=\begin{pmatrix}
    D_{11} & 0\\
    0 & D_{12}
\end{pmatrix}$,
where the limits $D_{11}^*=0$ and $D_{12}^*$ is nonsingular.
Assume that $E_2=(E_{21} E_{22})$, where $E_{21}$ and $D_{11}$ have the same number of columns.
Note that $E_2D_1E_2^{\top}\to E_{22}D_{12}^*E_{22}^{\top}$, which may not be invertible.
To tackle this, choose any orthogonal matrix $G=(G_1 G_2)\in \mathbb{R}^{l_1\times l_1}$,
where $l_1$ is the number of rows in $E_2$, such that $\col(G_1)=\col(E_{22})$.
Then 
\begin{align}
D_1E_2^{\top}
(E_2D_1E_2^{\top})^{-1}
E_2D_1
=
\begin{pmatrix}
D_{11}E_{21}^{\top}G_1 &
D_{11}E_{21}^{\top}G_2 \\
D_{12}E_{22}^{\top}G_1 &
0
\end{pmatrix}
V
\begin{pmatrix}
D_{11}E_{21}^{\top}G_1 &
D_{11}E_{21}^{\top}G_2 \\
D_{12}E_{22}^{\top}G_1 &
0
\end{pmatrix}^{\top},
\label{e154}
\end{align}
where we defined the $2\times 2$ block matrix 
\begin{align}
V^{-1}
:=
\begin{pmatrix}
* & U^{\top} \\
U & W
\end{pmatrix}
=
\begin{pmatrix}
G_1^{\top}
\\
G_2^{\top}
\end{pmatrix}
E_{21}D_{11}E_{21}^{\top}
(G_1 G_2)
+
\begin{pmatrix}
G_1^{\top}E_{22}D_{12}E_{22}^{\top}G_1 &0 \\
0&0
\end{pmatrix}.
\label{e159}
\end{align}
Using the Schur complement theorem
and the fact that the first term in the rightmost side of \eqref{e159} is positive semidefinite, 
we can show that the top left block $V_{11}$ 
is bounded between 
$(G_1^{\top}E_{22}D_{12}E_{22}^{\top}G_1)^{-1}$
and
$(G_1^{\top}E_{22}D_{12}E_{22}^{\top}G_1
+
G_1^{\top}E_{21}D_{11}E_{21}^{\top}G_1
)^{-1}$,
and hence 
$V_{11}$ 
converges to $(G_1^{\top}E_{22}D_{12}^*E_{22}^{\top}G_1)^{-1}$.
Then we can see that  the convergence of \eqref{e154}   to  
$D_{12}^*E_{22}^{\top}G_1(G_1^{\top}E_{22}D_{12}^*E_{22}^{\top}G_1)^{-1}(D_{12}^*E_{22}^{\top}G_1)^{\top}$would follow if we show
\begin{align}
\|D_{11}E_{21}^{\top}G_2
(G_2^{\top}E_{21}D_{11}
E_{21}^{\top}G_2)^{-1}
G_2^{\top}E_{21}D_{11}\|_F
\to 0
\end{align}
as $D_{11}\to 0$.
This is indeed true since $D_{11}^{1/2}E_{21}^{\top}G_2
(G_2^{\top}E_{21}D_{11}
E_{21}^{\top}G_2)^{-1}
G_2^{\top}E_{21}D_{11}^{1/2}$ is a projection matrix.
The proof is completed.
\end{proof}

\section{Convergence for general regularizers}
\label{sec_general}

\subsection{Auxiliary lemmas}
\begin{lem}\label{lems15}
    Suppose $M$ is symmetric and $I+M\succeq cI$ for some $c>0$.
    Then 
    $0\preceq (I-(I+M)^{-1})^2 \preceq \frac1{c^2} M^2$.
\end{lem}
\begin{proof}
The first $\preceq$ is trivial. For the second one, note that by choosing a suitable orthogonal basis the problem is reduced to the case of diagonal $M$, in which case it is also trivial.
\end{proof}

\begin{lem}\label{lems16}
Suppose that $M_1,M_2$ are psd matrices,
$\Delta$ is a symmetric matrix,
and 
\begin{align}
M_1+\Delta&\succeq  c_1 I
\\
M_1&\succeq  c_1 I;
\\
\lambda_{\max}(M_i)&\le c_2,\quad i=1,2,
\end{align}
for some $c_1,c_2>0$.
Then 
\begin{align}
(M_1^{-1}-(M_1+\Delta)^{-1})
M_2
(M_1^{-1}-(M_1+\Delta)^{-1})
\preceq 
\frac{c_2^3}{c_1^4}
M_1^{-1}
\Delta^2
M_1^{-1}.
\end{align}
\end{lem}

\begin{proof}
We have  
\begin{align}
&\quad (M_1^{-1}-(M_1+\Delta)^{-1})
M_2
(M_1^{-1}-(M_1+\Delta)^{-1})
\nonumber
\\
&=
M_1^{-1/2}
(I-(I+M_1^{-1/2}\Delta
M_1^{-1/2})^{-1})
M_1^{-1/2}
M_2
M_1^{-1/2}
(I-(I+M_1^{-1/2}\Delta
M_1^{-1/2})^{-1})
M_1^{-1/2}
\\
&\preceq
\frac{c_2}{c_1}
M_1^{-1/2}
(I-(I+M_1^{-1/2}\Delta
M_1^{-1/2})^{-1})^2
M_1^{-1/2}
\\
&\preceq 
\frac{c_2}{c_1}
\cdot 
(\frac{c_1}{c_2})^{-2}
M_1^{-1/2}
(M_1^{-1/2}\Delta
M_1^{-1/2})^2
M_1^{-1/2}
\\
&\preceq 
\frac{c_2}{c_1}
\cdot 
(\frac{c_1}{c_2})^{-2}
\cdot 
\frac1{c_1}
M_1^{-1}
\Delta^2
M_1^{-1}
\end{align}
where we applied Lemma~\ref{lems15}.
\end{proof}

\begin{lem}\label{lem32}
We have
\begin{align}
\left[ \frac1{n}A_{:\sj}
(\bar{G}_{\mathcal{A}}^{-1}-G^{-1})
A_{:\sj}^{\top}\right]^2
&\preceq
\frac{c_2^3\zeta_3^2}{4c_1^4n}A_{:\sj}
\bar{G}_{\mathcal{A}}^{-1}
\diag((|\hat{\alpha}_l-\hat{\beta}_l|^2)_{l\neq j})
\bar{G}_{\mathcal{A}}^{-1}
A_{:\sj}^{\top}
\label{es152}
\\
\left[ \frac1{n}A_{:\sj}
(\bar{G}_{\mathcal{A}}^{-1}-\bar{G}_{\mathcal{B}}^{-1})
A_{:\sj}^{\top}\right]^2
&\preceq
\frac{c_2^3\zeta_3^2}{c_1^4n}A_{:\sj}
\bar{G}_{\mathcal{A}}^{-1}
\diag((|\hat{\alpha}_l-\hat{\beta}_l|^2)_{l\neq j})
\bar{G}_{\mathcal{A}}^{-1}
A_{:\sj}^{\top}
\label{es153}
\end{align}
where $c_2:=\frac1{n}\lambda_{\max}(A_{:\sj}^{\top}A_{:\sj})+\lambda \zeta_{2+}$ and $c_1:=\lambda \zeta_{2-}$.
\end{lem}

\begin{proof}
Recall that 
\begin{align}
\bar{G}_{\mathcal{A}}
&:=
\frac1{n}A_{:\sj}^{\top}A_{:\sj}+
\lambda \diag((\psi'(\hat{\alpha}_l))_{l\neq j});
\\
G
&:=
\frac1{n}A_{:\sj}^{\top}A_{:\sj}+
\lambda 
\left(\frac{\psi(\hat{\alpha}_l)-\psi(\hat{\beta}_l)}
{\hat{\alpha}_l-\hat{\beta}_l}\right)_{l\neq j}.
\end{align}
Since $\|\psi''\|_{\infty}\le \zeta_3$, we have 
\begin{align}
\left|
\psi'(\hat{\alpha}_l)
-
\frac{\psi(\hat{\alpha}_l)-\psi(\hat{\beta}_l)}
{\hat{\alpha}_l-\hat{\beta}_l}
\right|
\le 
\frac1{2}\zeta_3|\hat{\alpha}_l-\hat{\beta}_l|.
\end{align}
Then \eqref{es152} from applying Lemma~\ref{lems16} with $M_2:=\frac1{n}A_{:\sj}^{\top}A_{:\sj}$, 
$M_1:=\bar{G}_{\mathcal{A}}$,
and 
$\Delta:=\lambda 
\left(\frac{\psi(\hat{\alpha}_l)-\psi(\hat{\beta}_l)}
{\hat{\alpha}_l-\hat{\beta}_l}\right)_{l\neq j}
-
\lambda \diag((\psi'(\hat{\alpha}_l))_{l\neq j})$.
The proof of \eqref{es152} is similar, thus omitted.
\end{proof}

\subsection{Proof of Theorem~\ref{thm7}}
Recall from \eqref{eq82} that
\begin{align}
\lambda\|\psi(\hat{\alpha}_{\sj})-
\psi(\hat{\beta}_{\sj})
\|_2
&\le 
\frac1{n}\|A_{:\sj}^{\top}(R-S)\|_2
\\
&\le 
\frac1{n}\|A\|_{\rm op}
\|A_{:j}\hat{\alpha}_j-B_{:j}\hat{\beta}_j\|_2
\\
&\le
\frac1{n}(\|A\|_{\rm op}^2\vee \|B\|_{\rm op}^2)
(|\hat{\alpha}_j|+|\hat{\beta}_j|).
\end{align}
Under $\inf_{t\in\mathbb{R}}\psi'(t)\ge \zeta_{2-}$, the left side above also upper bounds $\lambda \zeta_{2-}\|\hat{\alpha}_{\sj}-\hat{\beta}_{\sj}\|_2$.
Thus
\begin{align}
\|\frac1{n}A_{:\sj}
(\bar{G}_{\mathcal{A}}^{-1}-\bar{G}_{\mathcal{B}}^{-1})
A_{:\sj}^{\top}
\check{B}_{:j}\|_2
&\le 
\frac{c_2^{1.5}\zeta_3}{c_1^2\sqrt{n}}
\|\bar{G}_{\mathcal{A}}^{-1}
A_{:\sj}^{\top}
\check{B}_{:j}
\|_{\infty}
\cdot
\frac1{n\lambda \zeta_{2-}}(\|A\|_{\rm op}^2\vee \|B\|_{\rm op}^2)
(|\hat{\alpha}_j|+|\hat{\beta}_j|)
\\
&\le
\frac{c_2^{1.5}\zeta_3}{c_1^2\lambda \zeta_{2-}}\Gamma D^2
(|\hat{\alpha}_j|+|\hat{\beta}_j|).
\end{align}
The rest of the proof is the same as Section~\ref{sec_aformula}, using Lemma~\ref{lem32} in place of 
Lemma~\ref{lem5}.

\subsection{Proof of Theorem~\ref{thm8}}
There exists $c>0$ (depending only on $\mathcal{P}$ and $C$) such that the following holds:
there exists a set $\mathcal{E}$ of $A$ such that 
\begin{align}
\mathbb{P}[A\in \mathcal{E}]
\le n^{-C}
\label{e_45}
\end{align}
and that for each $A\in \mathcal{E}^c$,
\begin{align}
\|A\|_{\rm op}
&\vee \max_{j=1}^p\|B^{(j)}\|_{\rm op}\le \frac1{c}\sqrt{n};
\label{e44}
\\
\check{B}_j^{(j)}
&:=B_j^{(j)}-\mu_{:j}
\textrm{ is }\frac{\ln n}{c}\textrm{-sub-Gaussian.}
\label{e45}
\end{align}
Here \eqref{e44} is a standard results following from references around \eqref{e184},
and \eqref{e45} follows from Lemma~\ref{lem9} and the union bound. 
Then conditioned on any $A\in\mathcal{E}^c$, we have 
$\max_{1\le j
\le p}\|\bar{G}_{\mathcal{A}}^{-1}
A_{:\sj}^{\top}\|_{\rm op}
\le \frac{\sqrt{n}}{c}\zeta_{2-}^{-1}$,
and each coordinate of 
$\bar{G}_{\mathcal{A}}^{-1}
A_{:\sj}^{\top}
\check{B}_{:j}^{(j)}$ is 
$\frac{n\ln n}{c^3}\zeta_{2-}^{-2}$-sub-Gaussian,
and we have (by union bound)
\begin{align}
\max_{1\le j\le p}\|\bar{G}_{\mathcal{A}}^{-1}
A_{:\sj}^{\top}
\check{B}_{:j}^{(j)}
\|_{\infty}
\le 
O_{\mathcal{P},\zeta,C}
(\sqrt{n}\ln n ),
\end{align}
with probability at least $1-n^{-C}$,
where the constants in $O_{\mathcal{P},\zeta,C}$ may depend on parameters in $\mathcal{P}$, $\zeta:=(\zeta_{2-},\zeta_{2+},\zeta_3)$, and $C$.
Therefore, by taking $D=\frac1{c}$ and $\Gamma=\gamma_{\mathcal{P},\zeta,C}\ln n$ with sufficiently large $\gamma_{\mathcal{P},\zeta,C}>0$, we have $\mathcal{G}$ with probability at least $1-O_{\mathcal{P}}(n^{-C})$,
without conditioning on $A$,
where we used the union bound and \eqref{e_45}.

Next, we show that 
\begin{align}
\sum_{j=1}^p\mathbb{E}[|\hat{\alpha}_j|^2+|\hat{\beta}_j|^2]=O_{\mathcal{P},\zeta}(n).
\label{e48}
\end{align}
We first observe that by the optimality of $\hat{\alpha}$,
\begin{align}
\frac1{2n}\|Y\|_2^2\ge \sum_{j=1}^n\rho(\hat{\alpha}_j)
\ge\frac{\zeta_{2-}}{2}\|\hat{\alpha}\|_2^2,
\end{align}
implying $\sum_{j=1}^p\mathbb{E}[|\hat{\alpha}_j|^2]=O_{\mathcal{P},\zeta}(n)$.
To control $\mathbb{E}[\hat{\beta}_j^2]$, we use arguments similar to the proof of Corollary~\ref{cor7}.
Define $A(t)\in\mathbb{R}^{n\times p}$ such that $A_{:\sj}(t)=A_{:\sj}$ and $A_{:j}(t)=(1-t)A_{:j}+tB^{(j)}_{:j}$.
Thus $A(0)=A$ and $A(1)=B^{(j)}$.
Let $Y(t):=A(t)\alpha+w$,
and let $\hat{\alpha}(t)$ be the solution to the penalized regression problem for the data $(A,Y(t))$.
Thus $\hat{\alpha}(0)=\hat{\alpha}$, and $\hat{\alpha}(1)$ has the same distribution as $\hat{\beta}^{(j)}$ (since $B^{(j)}$ and $A$ are identically distributed).
The normal equation reads as
\begin{align}
-\frac1{n}A^{\top}(Y(t)-A\hat{\alpha}(t))+\psi(\hat{\alpha}(t))=0,
\end{align}
from which we obtain the derivative
\begin{align}
-\frac1{n}A^{\top}
(B_{:j}^{(j)}-A_{:j})\alpha_j
+\left(\frac1{n}A^{\top}A
+\diag(\psi'(\hat{\alpha}(t)))\right)
\hat{\alpha}'(t)=0.
\end{align}
Since $\left\|\left(\frac1{n}A^{\top}A
+\diag(\psi'(\hat{\alpha}(t)))\right)^{-1}\right\|_{\rm op}\le \zeta_{2-}^{-1}$,
we have $\|\hat{\alpha}'(t)\|_2\le |\alpha_j|O_{\mathcal{P},\zeta}(1)$.
Thus $|\hat{\alpha}_j(0)-
\hat{\alpha}_j(1)|
\le 
\|\hat{\alpha}(0)-
\hat{\alpha}(1)\|_2\le |\alpha_j|O_{\mathcal{P},\zeta}(1)$ 
and 
$|\hat{\alpha}_j(1)|^2
\le 
2|\hat{\alpha}_j(0)|^2
+2|\hat{\alpha}_j(1)-\hat{\alpha}_j(0)|^2
\le 2\hat{\alpha}_j^2+|\alpha_j|O_{\mathcal{P},\zeta}(1)$.
Then $\sum_{j=1}^p\mathbb{E}[|\hat{\beta}_j|^2]=
\sum_{j=1}^p\mathbb{E}[|\hat{\alpha}_j(1)|^2]
=O_{\mathcal{P},\zeta}(n)$, verifying \eqref{e48}.

\section{Experiments}
Results are generated by the following codes, which can be found in \url{https://github.com/jingboliu1/local_change.git}
\begin{itemize}
\item Figures~\ref{fig1_1}--\ref{fig2_3}: \verb|localk1_fig1and2.R|.
\item Table~\ref{tab0}: \verb|err_eval.R|.
\item Table~\ref{tab01}: \verb|err_eval1.R|.
\item Tables~\ref{tab1}, \ref{tab2} and~\ref{tab_new}: \verb|FDR_eq_table1.R|.
\item Table~\ref{tab3}: \verb|FDR_riboflavin_table3.R|.
\item Table~\ref{tab_lam001}: \verb|FDR_HIV1.R|.
\end{itemize}

\subsection{Table of approximation errors in the update formula}
\label{sec_61}

Consider the setting where the rows of the design matrix $A\in \mathbb{R}^{n\times p}$ are i.i.d.\ $\mathcal{N}(0,\Sigma)$,
where 
\begin{align}
\Sigma_{ij}=\rho^{|i-j|}.
\end{align}
Let $\alpha\in\mathbb{R}^p$ be such that the first $s$ entries are generated i.i.d.\  according to $\mathcal{N}(0,1)$, and the remaining entries are 0.
Take 
\begin{align}
Y=A\alpha+w
\end{align}
where $w\sim \mathcal{N}(0,0.01nI_{n\times n})$ is an independent Gaussian noise.
We take $n=800\upalpha$, $p=1000\upalpha$, $s=400\upalpha$, where $\upalpha$ controls the scale of the problem.
We uniformly sample $\lfloor \frac{p}{10\upalpha}\rfloor$ coordinates $j$ in $\{1,\dots,p\}$, 
resample the corresponding column in $A$ with the conditional distribution $P_{A_{:j}|A_{:\sj}}$, 
and compute the Lasso estimator $\hat{\beta}^{(j)}_j$ and the debiased estimator $\hat{\beta}^{(j)U}_j$
(both the exact values as well as the approximate values via the approximation formulas in Section~3.1 in the main paper).
We compute the normalized approximation error (mean squared error normalized by the square of the $\ell_2$ norm) of the approximation formulas across $\upalpha$ and $\rho$.
The results are shown in Table~\ref{tab0}.
These results confirm that the approximation error for the debiased estimator is consistently smaller than that of the standard Lasso, 
with the improvement becoming more pronounced as the correlation $\rho$ increases, in agreement with the theoretical analysis.

\begin{table}[ht]
\centering
\caption{Comparison of normalized square errors in Lasso (numbers on the left) and debiased Lasso (numbers on the right) across $\upalpha$ (dimension scale factor) and $\rho$}
\label{tab0}
\begin{tabular}{cccc}
\hline
$\upalpha$ & $\rho = 0$ & $\rho = 0.5$ & $\rho = 0.95$ \\ \hline
0.1 & 0.849, 0.758 & 0.170, 0.0686 & 0.0958, 0.0265 \\
0.2 & 0.224, 0.121 & 0.532, 0.0976 & 0.0250, 0.00285 \\
0.3 & 0.0778, 0.0557 & 0.0993, 0.0472 & 0.0969, 0.00343 \\
0.4 & 0.153, 0.0717 & 0.0764, 0.0269 & 0.290, 0.00554 \\
0.5 & 0.0555, 0.0381 & 0.112, 0.0241 & 0.369, 0.00157 \\
0.6 & 0.137, 0.0706 & 0.106, 0.0117 & 0.459, 0.00296 \\
0.7 & 0.0367, 0.0309 & 0.0490, 0.00948 & 0.00744, 0.00177 \\
0.8 & 0.0681, 0.0275 & 0.0750, 0.00612 & 0.140, 0.000567 \\
0.9 & 0.0228, 0.0174 & 0.0709, 0.00603 & 0.202, 0.000790 \\
1.0 & 0.0350, 0.0306 & 0.0579, 0.00604 & 0.566, 0.00170 \\
1.1 & 0.0521, 0.0201 & 0.161, 0.0173 & 0.140, 0.000248 \\
1.2 & 0.0235, 0.0114 & 0.303, 0.0173 & 0.199, 0.00156 \\
1.3 & 0.0236, 0.0214 & 0.175, 0.00563 & 0.655, 0.00110 \\
1.4 & 0.0248, 0.00905 & 0.0209, 0.00251 & 0.633, 0.00396 \\
1.5 & 0.0161, 0.00514 & 0.0836, 0.00746 & 0.470, 0.000992 \\ \hline
\end{tabular}
\end{table}

In Table~\ref{tab01}, we choose $\rho=0.5$, and use the same method for generating $A$, but generate $B$ by 
$B_{ij}:=\mathbb{E}[A_{ij}|A_{i\sj}]
+((\Sigma^{-1})_{jj})^{-1/2}\Xi_{ij}$, 
where $\Xi_{ij}$ are i.i.d. random variables satisfying $\Xi_{ij}=\pm (\xi)^{-1/2}$ with probability $\xi/2$, and $\Xi_{ij}=0$ with probability $1-\xi$.
In other words, $\check{B}_{ij}$ is now a 3-point distribution with the same mean and variance as in Table~\ref{tab0}.
This demonstrates the validity of the approximation formula beyond Gaussian designs, consistent with the theoretical analysis.

\begin{table}[ht]
    \centering
\caption{Normalized square errors in debiased Lasso across $\upalpha$ (dimension scale factor) and $\xi$ (parameter for the 3-point distribution)}
\label{tab01}
\begin{tabular}{cccc}
\hline
$\upalpha$ & $\xi = 0.5$ & $\xi = 0.1$ & $\xi = 0.05$ \\ \hline
0.1 & 0.1666 & 0.1330 & 0.1467 \\
0.3 & 0.0294 & 0.1176 & 0.0306 \\
0.5 & 0.0443 & 0.0281 & 0.0117 \\
0.7 & 0.0184 & 0.0151 & 0.0131 \\
0.9 & 0.00559 & 0.0142 & 0.00565 \\
1.1 & 0.00759 & 0.00507 & 0.00803 \\
1.3 & 0.00545 & 0.00425 & 0.00899 \\
1.5 & 0.00216 & 0.00552 & 0.00554 \\ \hline
\end{tabular}
\end{table}

\subsection{Plots of approximation errors in the update formula}

We consider the setting in Section~\ref{sec_61}, fixing $n=1000$, $p=1200$, $s=500$, 
In Fig.~\ref{fig1_1} - Fig.~\ref{fig1_3}, we plot the values of $\hat{\beta}^{(j)U}_j$ and $\hat{\beta}^{(j)U}_j-\tilde{\gamma}_j$
for $\rho=0,0.5,0.95$,
where $\hat{\beta}^{(j)U}_j$ is the debiased Lasso coefficient computed exactly according to the definition,
and 
$\tilde{\gamma}_j$ denotes the value computed using the approximation formula.
Fig.~\ref{fig2_1} - Fig.~\ref{fig2_3} compare $\hat{\beta}^{(j)}_j$ and its approximation error for $\rho=0,0.5,0.95$.
In these plots, we only uniformly select 1/12 of all the coordinates, 
to avoid cluttering of the picture. 
It can be seen that the approximation error for the debiased estimator is better than the plain Lasso for large $\rho$ (in turns of size of the error relative to the magnitude of the debiased coefficients), which is consistent with the theoretical analysis.


\begin{figure}[h]
\centering
    \includegraphics[width=9cm]{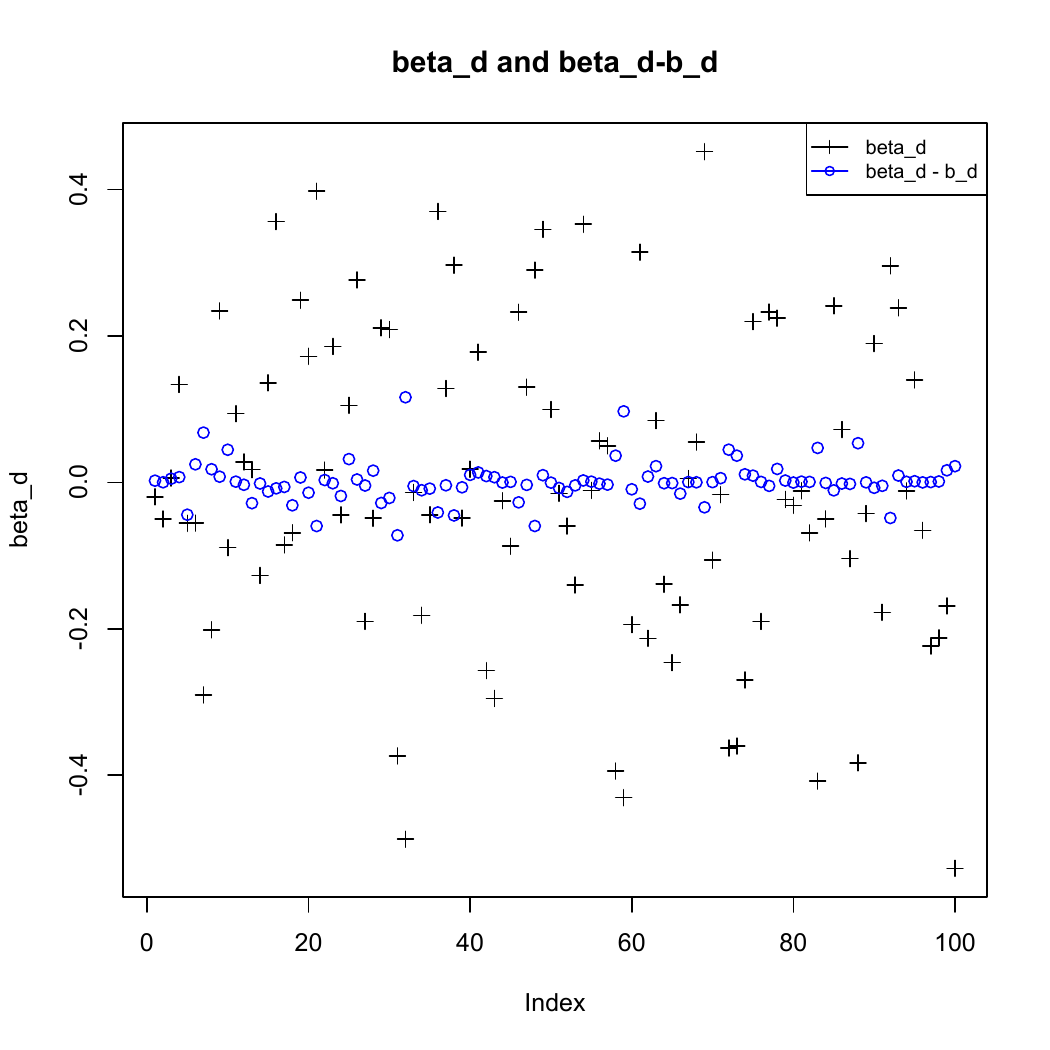}
\caption{Comparison of $\hat{\beta}^{(j)U}_j$ (cross) and its approximation error 
    $\hat{\beta}^{(j)U}_j-\tilde{\gamma}_j$ (circle) for $\rho=0$.}
        \label{fig1_1}
\end{figure}

\begin{figure}[h]
\centering
\includegraphics[width=9cm]{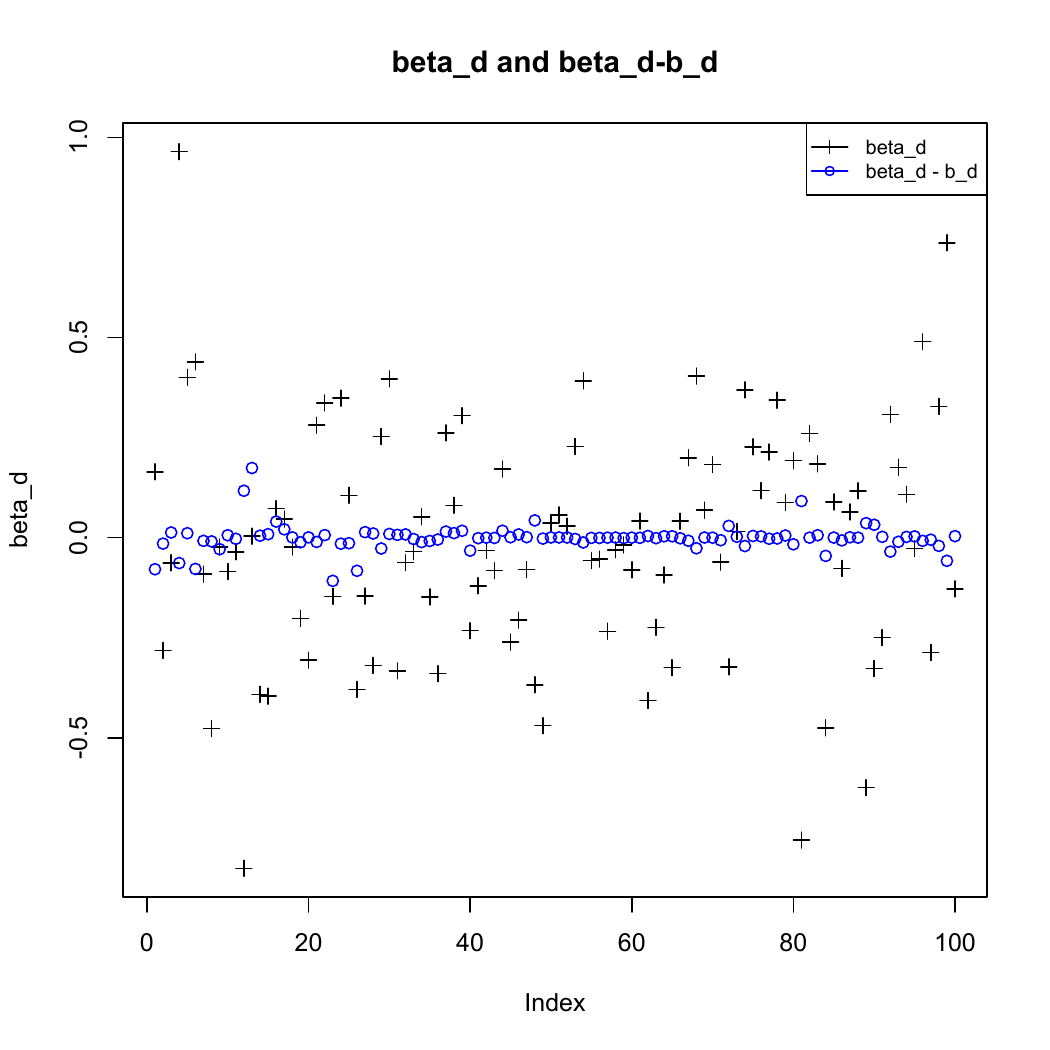}
\caption{Comparison of $\hat{\beta}^{(j)U}_j$ (cross) and its approximation error 
    $\hat{\beta}^{(j)U}_j-\tilde{\gamma}_j$ (circle) for $\rho=0.5$.}
\label{fig1_2}
\end{figure}

\begin{figure}[h]
\centering
\includegraphics[width=9cm]{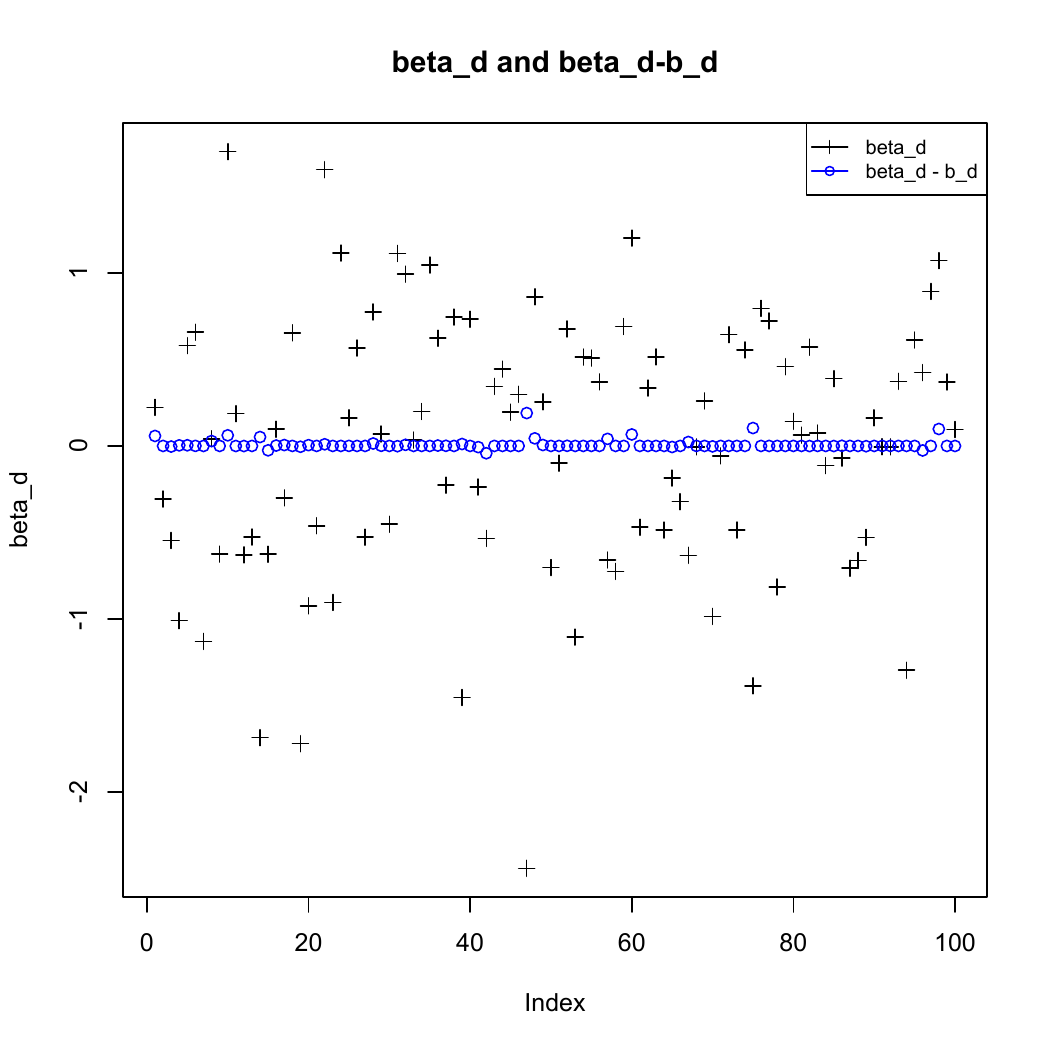}
    \caption{Comparison of $\hat{\beta}^{(j)U}_j$ (cross) and its approximation error 
    $\hat{\beta}^{(j)U}_j-\tilde{\gamma}_j$ (circle) for $\rho=0.95$.}
\label{fig1_3}
\end{figure}


\begin{figure}[h]
\centering
\includegraphics[width=9cm]{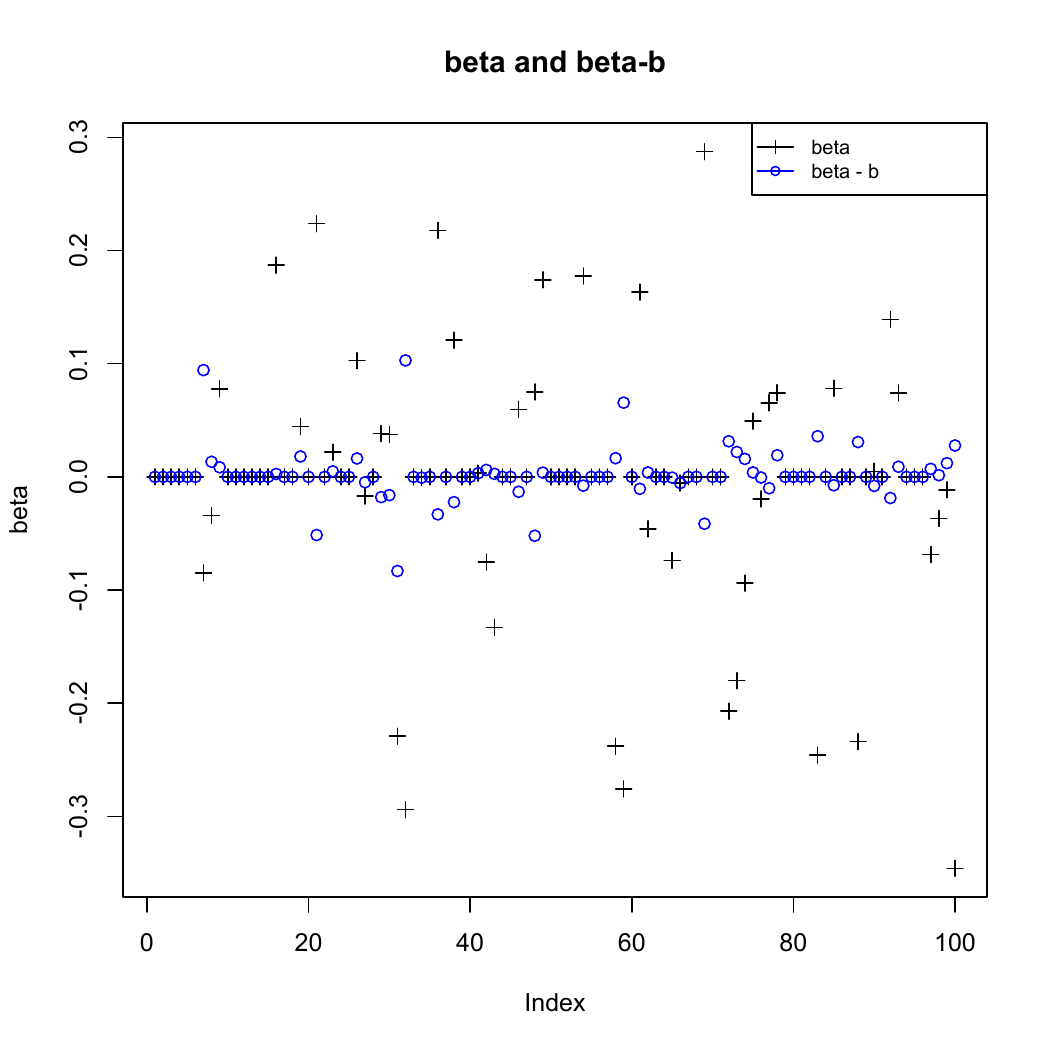}
        \caption{Comparison of $\hat{\beta}^{(j)}_j$ (cross) and its approximation error (circle) for $\rho=0$.}
        \label{fig2_1}
\end{figure}

\begin{figure}[h]
\centering
    \includegraphics[width=9cm]{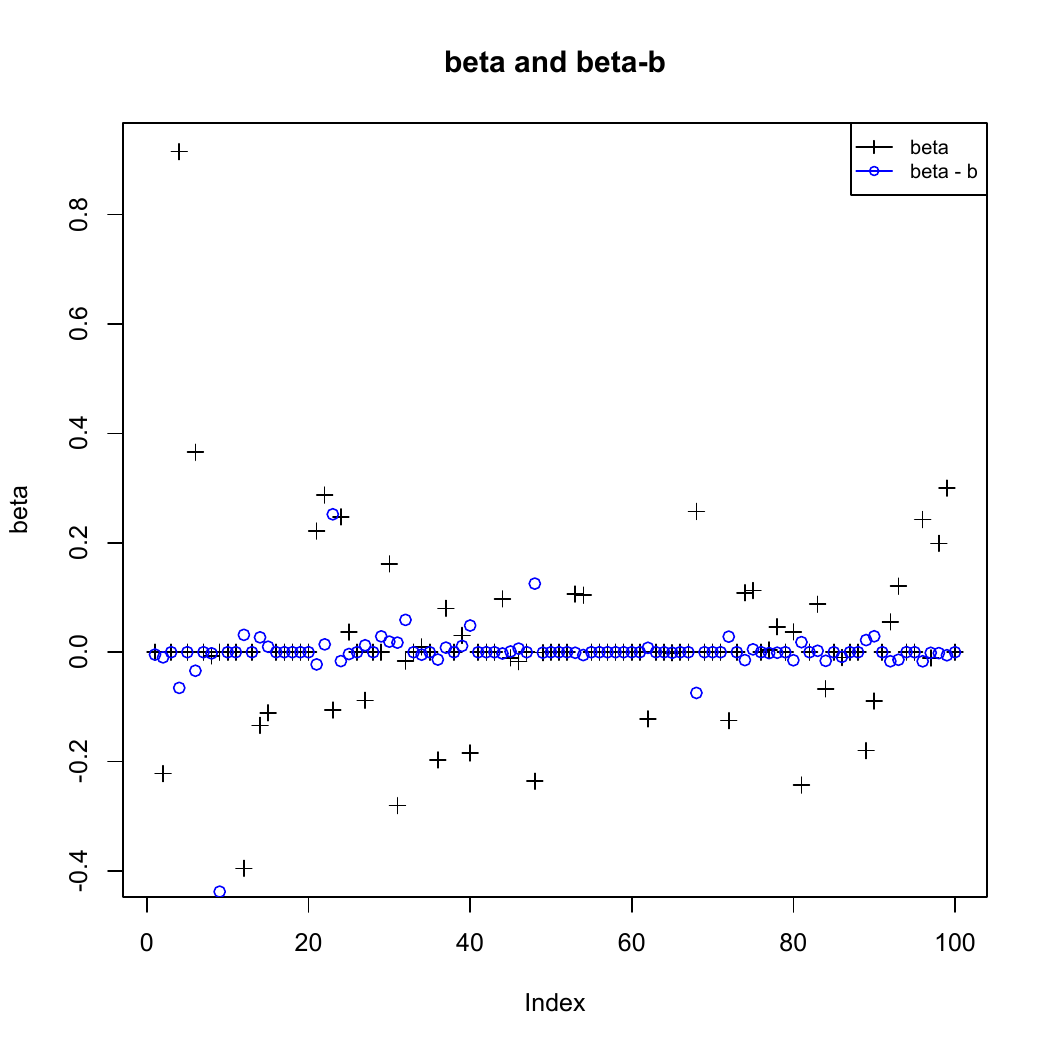}
        \caption{Comparison of $\hat{\beta}^{(j)}_j$ (cross) and its approximation error (circle) for $\rho=0.5$.}
            \label{fig2_2}
\end{figure}

\begin{figure}[h]   
\centering
\includegraphics[width=9cm]{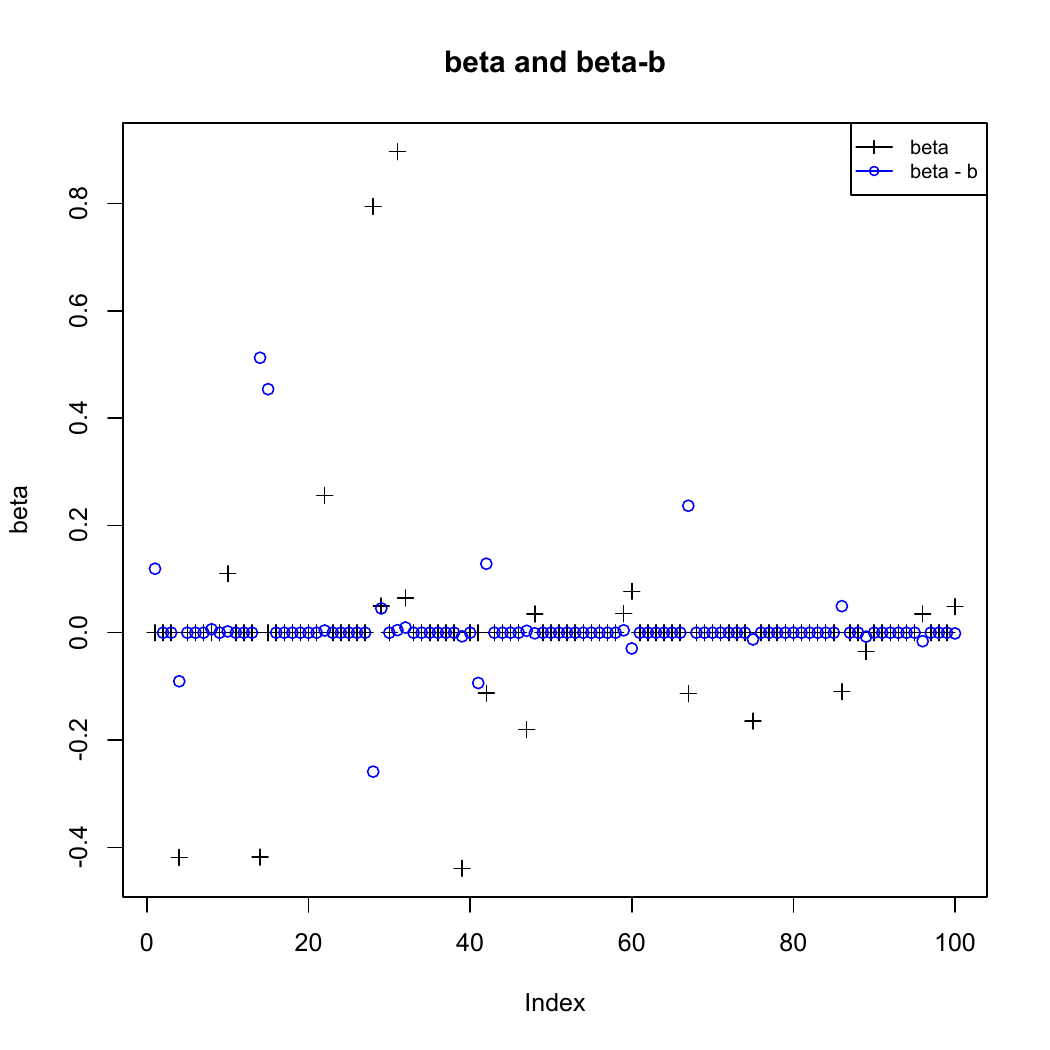}

\caption{Comparison of $\hat{\beta}^{(j)}_j$ (cross) and its approximation error (circle) for $\rho=0.95$.}               \label{fig2_3}
\end{figure}

\subsection{FDR control experiments with synthetic data}
\label{sec_exp}

Consider an FDR control problem similar to the setting in Theorem~13 in the main paper, where
$p=300$,
$n=200$,
and the target FDR
$q= 0.1$.
We take $\Sigma^{-1}=a_p(E+\epsilon I)$, where $E$ denotes the matrix whose entries are all 1,
and $a_p,\epsilon>0$ are parameters to be specified later.
Then we have 
$
\Sigma=\frac1{a_p\epsilon}
\left(
I-\frac{\epsilon^{-1}}{1+\epsilon^{-1}p}E
\right)
$.
We then generate $\alpha$ with a random set of $s$ coordinates equal to $A_{\rm val}/\sqrt{n}$ ($A_{\rm val}>0$ being a parameter to be specified),
and the rest coordinates equal to 0.
The observation is  $Y=A\alpha+w$, where $w\sim \mathcal{N}(0,\sigma I)$.

We compare the performance of 6 variable selection methods in Table~\ref{tab1} and Table~\ref{tab2}.
For the knockoff method, we will use the eq-knockoff construction of  \cite{candes2018panning}, 
which is natural in this setting since $\Sigma$ has equal values for the off-diagonals and for the diagonals.
The max eigenvalue of $\Sigma^{-1}$ is $a_p(p+\epsilon)$, so the condition $2S^{-1}\succeq \Sigma^{-1}$ in eq-knockoff 
becomes $S_{11}=\dots=S_{pp}\le \frac{2}{\lambda_{\rm max}(\Sigma^{-1})}=\frac{2}{a_p(p+\epsilon)}$.
To design the knockoff filter,
one tries to minimize $|\Sigma_{11}-S_{11}|=\left|\frac{1+\epsilon^{-1}(p-1)}{a_p(\epsilon+p)}-S_{11}\right|$ and so for $0<\epsilon<p-1$ the optimal $S_{11}=\frac{2}{a_p(p+\epsilon)}$.
For $\epsilon>p-1$, the optimal $S_{11}=\frac{1+\epsilon^{-1}(p-1)}{a_p(\epsilon+p)}$.

In experiments with relatively small sample sizes,
we report both the FDR estimate and its 95\% confidence interval.
For number of independent experiments $B=50$,
we compute the empirical average
\[
\widehat{FDR}
=
\frac{1}{B}
\sum_{b=1}^{B}
FDP^{(b)}.
\]
We estimate the Monte Carlo standard error by
\[
\widehat{SE}
=
\frac{
\operatorname{sd}
\!\left(
FDP^{(1)},\ldots,FDP^{(B)}
\right)
}
{\sqrt{B}},
\]
and report the corresponding Monte Carlo $95\%$ confidence interval
\[
\widehat{FDR}
\pm
1.96\,\widehat{SE}.
\]

In Table~\ref{tab1},
we use $\epsilon=50,a_p=1/p$, number of nonzero coefficients
$s=20$,
and noise standard deviation $\sigma = 1/\sqrt{p}$,
and vary the signal strength $A_{\rm val}$.
We see that FDR is controlled at approximately $q=0.1$ in most cases, except ``Knockoff-db''.
As $s=20$ is relatively small in this setting, there are a few instances of FDR overflow for ``Knockoff-db'',
which is not in violation of the theory.
Indeed, for threshold selection based on knockoff (which is the case for rows ``Knockoff'' and ``Knockoff-db'' in the table), \citet[Theorem~3.4]{candes2018panning} only offers guarantees on a modified FDR.
An alternative, more conservative threshold selection rule, called knockoff+,
is shown in
\citet[Theorem~3.4]{candes2018panning} to control the ordinary FDR.
In Table~\ref{tab1},
``Knockoff-db+'' corrected the problematic FDR readings for ``Knockoff-db'', at the cost of a lower power.
Meanwhile, the power achieved by the local knockoff filter and CRT
are better than the knockoff filter, with or without debiasing.

In Table~\ref{tab2}, 
we change the sparsity level to $s=60$ and the parameter size to $p=100$, and select a slightly different set of $A_{\rm val}$ values.
Again the FDR is controlled at approximately $q=0.1$ in most cases (with the exception of some instances for ``Knockoff'' and ``Knockoff-db'', due to the modified FDR issue mentioned above), while local knockoff filter and CRT achieve higher power than the knockoff filter. 
Note that in contrast to Table~\ref{tab1}, the power for the debiased versions are noticeably better than without debiasing in Table~\ref{tab2},
which is expected since $s$ increased.
For correlated designs, the debiased coefficients tend to outperform the standard coefficients when $s$ is not too small.

In Table~\ref{tab_new}, we increase the dimensions to $p=1000$, 
$n=600$, and sparsity to
$s=200$. Now FDR is well-controlled at approximately $q=0.1$ in all cases.
The local knockoff filter and CRT (especially the debiased versions) achieve high power, whereas the knockoff filters fail. 
This confirms the theoretical analysis in Theorem~13 in the main paper:
when the precision matrix is close to $E$ (the matrix consisting of 1's), the knockoff filter fails in the high-dimensional limit, 
regardless of the choice of the knockoff mechanism, whereas methods based on more relaxed local exchangeability conditions (such as local knockoff and CRT) remains powerful.

\begin{table}[ht]
\centering
\caption{Experimental Results for CRT and Knockoffs}
\label{tab1}
\begin{tabular}{|c|c|c|c|c|}
\hline
\textbf{Method} & \textbf{$A_{\rm val}$} & \textbf{FDR Average} & \textbf{FDR 95\% CI} & \textbf{Power Average} \\
\hline
\multirow{3}{*}{Knockoff}
 & 0.1 & 0.09266273 & [0.0587, 0.1266] & 0.476 \\
 & 0.2 & 0.1309883  & [0.1059, 0.1561] & 0.974 \\
 & 0.5 & 0.1303479  & [0.1031, 0.1576] & 1.000 \\
\hline

\multirow{3}{*}{Knockoff-db}
 & 0.1 & 0.3019134 & [0.1882, 0.4156] & 0.124 \\
 & 0.2 & 0.3009680 & [0.2018, 0.4001] & 0.434 \\
 & 0.5 & 0.1283717 & [0.1035, 0.1532] & 0.960 \\
\hline

\multirow{3}{*}{Knockoff-db+}
 & 0.1 & 0.008615385  & [0, 0.02046459] & 0.028  \\
 & 0.2 & 0.05472834  & [0.0281, 0.0814] & 0.346  \\
  & 0.5 & 0.08750887   & [0.0660, 0.1091] & 0.959   \\
\hline

\multirow{3}{*}{approx-local-knockoff}
 & 0.1 & 0.1222533 & [0.0915, 0.1530] & 0.673 \\
 & 0.2 & 0.1259715 & [0.1004, 0.1515] & 0.998 \\
 & 0.5 & 0.1241513 & [0.0981, 0.1502] & 1.000 \\
\hline

\multirow{3}{*}{approx-local-knockoff-db}
 & 0.1 & 0.1137460 & [0.0838, 0.1437] & 0.650 \\
 & 0.2 & 0.1111665 & [0.0873, 0.1350] & 0.998 \\
 & 0.5 & 0.1059581 & [0.0830, 0.1289] & 1.000 \\
\hline

\multirow{3}{*}{approx-CRT}
 & 0.1 & 0.08235843 & [0.0613, 0.1034] & 0.529 \\
 & 0.2 & 0.09038103 & [0.0744, 0.1064] & 1.000 \\
 & 0.5 & 0.08639233 & [0.0707, 0.1021] & 1.000 \\
\hline

\multirow{3}{*}{approx-CRT-db}
 & 0.1 & 0.08583246 & [0.0642, 0.1075] & 0.601 \\
 & 0.2 & 0.09356313 & [0.0790, 0.1081] & 1.000 \\
 & 0.5 & 0.09016392 & [0.0753, 0.1050] & 1.000 \\
\hline
\end{tabular}
\end{table}

\begin{table}[ht]
\centering
\caption{Experimental Results for CRT and Knockoffs}
\label{tab2}
\begin{tabular}{|c|c|c|c|c|}
\hline
\textbf{Method} & \textbf{$A_{\rm val}$} & \textbf{FDR Average} & \textbf{FDR 95\% CI} & \textbf{Power Average} \\
\hline

\multirow{3}{*}{Knockoff}
 & 0.2 & 0.1885708 & [0.1174, 0.2597] & 0.0623333 \\
 & 0.4 & 0.1053955 & [0.0834, 0.1274] & 0.4033333 \\
 & 0.6 & 0.09942364 & [0.0864, 0.1124] & 0.8623333 \\
\hline

\multirow{3}{*}{Knockoff-db}
 & 0.2 & 0.1161885 & [0.0502, 0.1822] & 0.2180000 \\
 & 0.4 & 0.1535242 & [0.0795, 0.2276] & 0.4206667 \\
 & 0.6 & 0.1279221 & [0.0700, 0.1859] & 0.5473333 \\
\hline

\multirow{3}{*}{approx-local-knockoff}
 & 0.2 & 0.09007134 & [0.0464, 0.1338] & 0.2230000 \\
 & 0.4 & 0.04435575 & [0.0351, 0.0536] & 0.9666667 \\
 & 0.6 & 0.04446117 & [0.0357, 0.0532] & 1.0000000 \\
\hline

\multirow{3}{*}{approx-local-knockoff-db}
 & 0.2 & 0.04598692 & [0.0331, 0.0589] & 0.4506667 \\
 & 0.4 & 0.04541280 & [0.0374, 0.0534] & 0.9840000 \\
 & 0.6 & 0.04566024 & [0.0369, 0.0544] & 1.0000000 \\
\hline

\multirow{3}{*}{approx-CRT}
 & 0.2 & 0.03237107 & [0.0182, 0.0465] & 0.1313333 \\
 & 0.4 & 0.04356834 & [0.0353, 0.0518] & 0.9660000 \\
 & 0.6 & 0.04103702 & [0.0334, 0.0487] & 1.0000000 \\
\hline

\multirow{3}{*}{approx-CRT-db}
 & 0.2 & 0.04048320 & [0.0298, 0.0512] & 0.3790000 \\
 & 0.4 & 0.04051158 & [0.0325, 0.0485] & 0.9823333 \\
 & 0.6 & 0.03975411 & [0.0318, 0.0477] & 1.0000000 \\
\hline

\end{tabular}
\end{table}

\begin{table}[ht]
\centering
\caption{Experimental Results for CRT and Knockoffs}
\label{tab_new}
\begin{tabular}{|c|c|c|c|c|}
\hline
\multicolumn{1}{|c|}{\textbf{Method}} & \textbf{$A_{\rm val}$} & \textbf{FDR Average} & \textbf{Power Average} & \textbf{Runtime} \\ 
\hline

\multirow{3}{*}{Knockoff} 
& 0.2 & 0 & 0.02 & 13.77 s \\
& 0.6 & 0 & 0.025 & 12.68 s \\
& 1.0 & 0 & 0.015 & 13.67 s \\
\hline

\multirow{3}{*}{Knockoff-db} 
& 0.2 & 0 & 0.13 & 15.81 s \\
& 0.6 & 0 & 0.125 & 15.52 s \\
& 1.0 & 0 & 0.125 & 16.11 s \\
\hline

\multirow{3}{*}{approx-local-knockoff} 
& 0.2 & 0.07777778 & 0.415 & 1.96 min \\
& 0.6 & 0.06862745 & 0.475 & 2.01 min \\
& 1.0 & 0.0733945 & 0.505 & 2.05 min \\
\hline

\multirow{3}{*}{approx-local-knockoff-db} 
& 0.2 & 0.05825243 & 0.485 & 3.61 min \\
& 0.6 & 0.05042017 & 0.565 & 3.50 min \\
& 1.0 & 0.04918033 & 0.58 & 3.41 min \\
\hline

\multirow{3}{*}{approx-CRT} 
& 0.2 & 0.1037736 & 0.475 & 5.15 min \\
& 0.6 & 0.08108108 & 0.51 & 5.60 min \\
& 1.0 & 0.10 & 0.54 & 5.72 min \\
\hline

\multirow{3}{*}{approx-CRT-db} 
& 0.2 & 0.06923077 & 0.605 & 6.37 min \\
& 0.6 & 0.07042254 & 0.66 & 6.01 min \\
& 1.0 & 0.06535948 & 0.715 & 5.98 min \\
\hline

\end{tabular}
\end{table}

\subsection{FDR control with Riboflavin data}\label{sec_ribo}

\begin{table}[ht]
\centering
\caption{FDR experiments with Riboflavin data (averaged over 50 runs)}
\label{tab3}
\begin{tabular}{|c|c|c|c|}
\hline
\textbf{Method} & \textbf{FDR Average} & \textbf{FDR 95\% CI} & \textbf{Power Average} \\
\hline

Knockoff
& 0.07862957
& [0.0606, 0.0967]
& 0.3866667
\\
\hline

Knockoff-db
& 0.03701082
& [0.0176, 0.0564]
& 0.1774359
\\
\hline

approx-local-knockoff
& 0.04449921
& [0.0404, 0.0486]
& 0.5117949
\\
\hline

approx-local-knockoff-db
& 0.1293922
& [0.1105, 0.1483]
& 0.5835897
\\
\hline

approx-CRT
& 0.03975572
& [0.0342, 0.0454]
& 0.4820513
\\
\hline

approx-CRT-db
& 0.1938958
& [0.1837, 0.2041]
& 0.6492308
\\
\hline

\end{tabular}
\end{table}

We use the riboflavin dataset, available in the supplemental materials of \cite{buhlmann2014high},
which was widely used in FDR control experiments 
\citep{javanmard2014confidence,buhlmann2014high,huang2017controlling}.
It contains a $p_0=4088$ by $n=71$ matrix of the logarithm of the expression levels,
and a response vector $Y$ of the logarithm of the riboflavin production rate.
The original measurement matrix contains measurements of many similar (highly correlated) genes, 
so we use the \verb"findCorrelation" function in R
to remove the highly correlated columns with cutoff $\rho=0.5$.
We normalize the means and variance of the columns, and  use graphical Lasso function \verb"glasso" to estimate the covariance matrix $\Sigma$ of the features.
Then we use the best linear estimator $A_{:\sj}\Sigma_{\sj}^{-1}\Sigma_{\sj j}$ 
for the $\mu_{:j}$ in the definition of the debiased estimator.
The FDR and power cannot be precisely evaluated since we do not know the ground truth.
To tackle this issue, we first use cross-validated Lasso to obtain $\alpha$ for the observed $Y$, 
and then generate new $Y=A\alpha+w$,
where the noise level is estimated using the norm of the residual in the previous Lasso regression,
so that we can calculate the FDR and power using the new $Y$ and $\alpha$.
Previously, a similar approach for testing FDR control methods on real datasets was adopted in the literature; see for example \cite{javanmard2020flexible}.
The results of the FDR and power values are shown in Table~\ref{tab3}.
We see an increase in the power by using local knockoffs or CRT, while roughly controlling the FDR in most cases.

\subsection{FDR control with HIV data}
We use the same procedure as in Section~\ref{sec_ribo}, but apply it to the HIV Drug Resistance Database, available at \url{https://hivdb.stanford.edu}. 
We use the code from \cite{guan2025one} to extract the design matrix and response vector. 
For this dataset, we have $n = 2026$, $p = 163$, and sparsity level $s = 79$.
For the knockoff function, we use the implementation from the official package.
The results are shown in Table~\ref{tab_lam001}.
While all methods kept FDR below approximately $q=0.1$, 
the debiased versions of local knockoff and CRT achieve higher powers.


\begin{table}[ht]
\centering
\caption{FDR experiments with HIV data (averaged over 50 runs)}
\label{tab_lam001}
\begin{tabular}{|c|c|c|c|}
\hline
\textbf{Method} & \textbf{FDR (95\% CI)} & \textbf{Power} & \textbf{Runtime} \\
\hline

Knockoff
& 0.007 \,[0.001,0.012]
& 0.133
& 4.66 s \\
\hline

approx-local-knockoff
& 0.067 \,[0.057,0.077]
& 0.562
& 1.69 min \\
\hline

approx-local-knockoff-db
& 0.126 \,[0.116,0.136]
& 0.612
& 2.02 min \\
\hline

approx-CRT
& 0.065 \,[0.057,0.074]
& 0.546
& 1.77 min \\
\hline

approx-CRT-db
& 0.105 \,[0.091,0.118]
& 0.603
& 1.60 min \\
\hline

\end{tabular}
\end{table}





\section{Proofs and implementation details for variable se lection}
\label{sec_pvar}

\subsection{Proof of Lemma~\ref{lem15}}
Set $a=\frac1{\sqrt{2}}S^{1/2}e$ where $e=(1,\dots,1)^{\top}$. 
From $\|s\|_1=s^{\top}S^{-1}s\ge\frac1{2}s^{\top}Es=\frac1{2}\|s\|_1^2$
we obtain $\|a\|^2=\frac1{2}\|s\|_1\le 1$.
Moreover,
\begin{align}
(I-\frac1{2}S^{1/2}ES^{1/2})^{-1}
&=(I-aa^{\top})^{-1}
\\
&=I+\frac1{1-\|a\|_2^2}aa^{\top}.
\end{align}
Therefore, the diagonal values of $(2S-S\Sigma^{-1}S)^{-1}$ are 
\begin{align}
\frac1{2s_j}(1+\frac{a_j^2}{1-\|a\|_2^2})
&=\frac1{2s_j}(1+\frac{s_j/2}{1-\|s\|_1/2})
\\
&=\frac1{2s_j}+\frac1{4-2\|s\|_1}
\\
&>\frac1{2s_j}.
\end{align}
However, by the Markov inequality we have 
\begin{align}
\frac1{p}|\{j\colon \frac1{2s_j}>\frac{p}{10}\}|
&=1-\frac1{p}|\{j\colon s_j\ge\frac{5}{p}\}|
\\
&\ge 1-\frac1{5}\|s\|_1
\\
&\ge \frac{3}{5}.
\end{align}

\subsection{Proof of Theorem~\ref{thm_19}}
From the definition of $\Sigma^{-1}$ it is easy to see that the empirical distribution of its diagonals converges to 0.
From the Schur complement theorem we know that the  diagonals of $\underline{\Sigma}^{-1}$ is two copies of the diagonals of 
$(2S-S\Sigma^{-1}S)^{-1}$, which we denote by $d_1,\dots,d_p$. From Lemma~\ref{lem15} we see that $\frac1{p}|\{j\colon d_j>\frac{pa_p}{10}\}|\ge \frac{3}{5}$, hence $\frac1{2p}
|\{j\colon (\underline{\Sigma}^{-1})_{jj} >\frac1{10}\}|\ge \frac{3}{5}$.

\subsection{Proof of Theorem~\ref{thm14}}
Under Assumption~\ref{assump_2},
for large enough $n$ we have the following with high probability:
\begin{align}
\fdr
&=\mathbb{E}\left[\frac{|\{j\in H_0\colon |\hat{\alpha}^U_j|
>T(\epsilon)\}|}{|\{j\colon |\hat{\alpha}^U_j|>T(\epsilon)\}|}
\right]
\\
&\le 
\frac{|\{j\in H_0\colon |\hat{\alpha}^U_j|>T(\epsilon)-0.1\epsilon\}|}{|\{j\colon |\hat{\alpha}^U_j|>T(\epsilon)\}|}
\\
&=\frac{\hat{P}_{\hat{\alpha}^U\alpha}[(T(\epsilon)-0.1\epsilon,\infty)\times \{0\}]}
{\hat{P}_{\hat{\alpha}^U}[(T(\epsilon),\infty)]}
\\
&\le \frac{P_{\hat{\alpha}^U\alpha}[(T(\epsilon)-0.2\epsilon,\infty)\times \{0\}]}
{\hat{P}_{\hat{\alpha}^U}[(T(\epsilon),\infty)]}
\\
&=\frac{P_{\hat{\gamma}\alpha}[(T(\epsilon)-0.2\epsilon,\infty)\times \{0\}]}
{\hat{P}_{\hat{\alpha}^U}[(T(\epsilon),\infty)]}
\\
&=\frac{P_{\tilde{\gamma}\alpha}[(T(\epsilon)-0.3\epsilon,\infty)\times \{0\}]}
{\hat{P}_{\hat{\alpha}^U}[(T(\epsilon),\infty)]}
\\
&=\frac{\hat{P}_{\tilde{\gamma}\alpha}[(T(\epsilon)-\epsilon,\infty)\times \{0\}]}
{\hat{P}_{\hat{\alpha}^U}[(T(\epsilon),\infty)]}
\\
&\le q.
\end{align}
Since the FDP is bounded by 1, convergence in probability implies convergence of expectations.

\subsection{A note on computation of $\hat{\alpha}^U$}
\label{sec_compute}
\begin{algorithm}[ht]
\caption{Computation of $\hat{\alpha}^U$}
\label{algo2}
\begin{flushleft}
\textbf{Input:} 
Data $A\in\mathbb{R}^{n\times p}$ and $Y\in\mathbb{R}^n$, 
$\lambda>0$.
Assume known $\mu_{:1},\dots,\mu_{:p}$.
\end{flushleft}
\begin{algorithmic}
\State Compute the Lasso solution $\hat{\alpha}$ and the subgradient $\psi$ for $(A,Y)$.
Set $\mathcal{S}:=\{j\colon \psi_j=\pm1\}$ and 
$R:=Y-A\hat{\alpha}$.
\State 
Compute 
$\Omega:=A_{:\mathcal{S}}^{\top}A_{:\mathcal{S}}$, $\Pi:=\Omega^{-1}$,
and $P=A_{:\mathcal{S}}\Pi A_{:\mathcal{S}}^{\top}$.
\For{$1\le j\le p$} 
\If{$j\in \mathcal{S}$}
\State 
Set $\mathcal{S}_j\gets \mathcal{S}\setminus\{j\}$
\State
Compute 
$P_j\gets P-(A_{:\mathcal{S}}\Pi_{\mathcal{S}j})A_{:j}^{\top} -A_{:j}(\Pi_{j\mathcal{S}_j}A_{:\mathcal{S}_j}^{\top})
-A_{:\mathcal{S}_j}\Pi_{\mathcal{S}_jj}\Pi_j^{-1}\Pi_{j\mathcal{S}_j}A_{:\mathcal{S}_j}^{\top}$
\Else 
\State
$P_j\gets P$
\EndIf
\State Compute $\check{A}_{:j}=A_{:j}-\mu_{:j}$;
\quad
$\hat{\alpha}_j^U\leftarrow \hat{\alpha}_j
+\left(\frac1{n}\check{A}_{:j}^{\top}
(I-P_j)A_{:j}\right)^{-1}
\frac{\check{A}_{:j}^{\top}R}{n}$.
\EndFor
\end{algorithmic}
\begin{flushleft}
\textbf{Output:} $\hat{\alpha}^U$.
\end{flushleft}
\end{algorithm}

We discuss how to efficiently compute $\hat{\alpha}^U$; see Algorithm~\ref{algo2}.
Recall the definition of $\hat{\alpha}^U$:
\begin{align}
\hat{\alpha}^U_j
&=\hat{\alpha}_j
+\left(\frac1{n}\check{A}_{:j}^{\top}
(I-P_j)A_{:j}
\right)^{-1}
\frac{\check{A}^{\top}_{:j}R}{n}
\end{align}
where $P_j$ denotes the projection onto the column space of $A_{:\mathcal{S}_j}$, $\mathcal{S}_j:=\{l\neq j\colon \hat{\alpha}_l\neq 0\}$,
and $\Theta:=\Sigma^{-1}$ which can be computed in time $O(n^3)$.
If we use the formula $P_j=A_{:\mathcal{S}_j}(A_{:\mathcal{S}_j}^{\top}A_{:\mathcal{S}_j})^{-1}A_{:\mathcal{S}_j}^{\top}$, then computing $P_j$ takes time $O(n^3)$, 
so that computing $\hat{\alpha}^U$ takes time $O(pn^3)=O(n^4)$.
However, it is possible to efficiently compute the $P_j$'s using a ``rank-one-update'' formula so that the time of computing $\hat{\alpha}^U$ is $O(n^3)$:
First set 
\begin{align}
\Omega:=A_{:\mathcal{S}}^{\top}A_{:\mathcal{S}}
\end{align}
and $\Pi=\Omega^{-1}$, which can be computed in time $O(n^3)$.
Then for each $j$, note that by the Schur complement theorem, 
\begin{align}
(A_{:\mathcal{S}_j}^{\top}A_{:\mathcal{S}_j})^{-1}
={\Omega_{\mathcal{S}_j}}^{-1}
=\Pi_{\mathcal{S}_j}
-\Pi_{\mathcal{S}_jj}\Pi_j^{-1}\Pi_{j\mathcal{S}_j}.
\end{align}
We can compute $\check{A}_{:j}=A_{:j}+A_{:\sj}\Theta_{\sj j}\Theta_j^{-1}$ in time $O(n^2)$.
Since $\Pi_{\mathcal{S}_jj}$ is a vector, we can compute $\check{A}_{:j}^{\top}\Pi_{\mathcal{S}_jj}\Pi_j^{-1}\Pi_{j\mathcal{S}_j}A_{:j}$ in time $O(n^2)$.
We can also compute 
\begin{align}
A_{:\mathcal{S}_j}\Pi_{\mathcal{S}_j}A_{:\mathcal{S}_j}^{\top}=A_{:\mathcal{S}}\Pi A_{:\mathcal{S}}^{\top}-(A_{:\mathcal{S}}\Pi_{\mathcal{S}j})A_{:j}^{\top} -A_{:j}(\Pi_{j\mathcal{S}_j}A_{:\mathcal{S}_j}^{\top})
\end{align}
in time $O(n^2)$.
Then $P_j=A_{:\mathcal{S}_j}(A_{:\mathcal{S}_j}^{\top}A_{:\mathcal{S}_j})^{-1}A_{:\mathcal{S}_j}^{\top}$ can be computed in time $O(n^2)$ for each $j$.
Hence $\hat{\alpha}^U$ is computed in $O(n^3)$ time.

\section*{Acknowledgment}
This research 
was supported in part by NSF Grant DMS-2515510.

\bibliographystyle{plainnat}
\bibliography{KO}

\begin{thebibliography}{60}
\providecommand{\natexlab}[1]{#1}
\providecommand{\url}[1]{\texttt{#1}}
\expandafter\ifx\csname urlstyle\endcsname\relax
  \providecommand{\doi}[1]{doi: #1}\else
  \providecommand{\doi}{doi: \begingroup \urlstyle{rm}\Url}\fi

\bibitem[Aubin et~al.(2020)Aubin, Krzakala, Lu, and Zdeborov{\'a}]{aubin2020generalization}
Benjamin Aubin, Florent Krzakala, Yue Lu, and Lenka Zdeborov{\'a}.
\newblock Generalization error in high-dimensional perceptrons: Approaching bayes error with convex optimization.
\newblock \emph{Advances in Neural Information Processing Systems}, 33:\penalty0 12199--12210, 2020.

\bibitem[Bao et~al.(2023)Bao, Han, and Xu]{bao2023leave}
Zhigang Bao, Qiyang Han, and Xiaocong Xu.
\newblock A leave-one-out approach to approximate message passing.
\newblock \emph{arXiv preprint arXiv:2312.05911}, 2023.

\bibitem[Barber and Cand\'es(2015)]{barber2015controlling}
Rina~Foygel Barber and Emmanuel~J. Cand\'es.
\newblock Controlling the false discovery rate via knockoffs.
\newblock \emph{The Annals of Statistics}, 43\penalty0 (5):\penalty0 2055--2085, 2015.

\bibitem[Bellec and Zhang(2021)]{bellec2021second}
Pierre~C Bellec and Cun-Hui Zhang.
\newblock Second-order stein: Sure for sure and other applications in high-dimensional inference.
\newblock \emph{The Annals of Statistics}, 49\penalty0 (4):\penalty0 1864--1903, 2021.

\bibitem[Bellec and Zhang(2022)]{bellec2022biasing}
Pierre~C Bellec and Cun-Hui Zhang.
\newblock De-biasing the lasso with degrees-of-freedom adjustment.
\newblock \emph{Bernoulli}, 28\penalty0 (2):\penalty0 713--743, 2022.

\bibitem[Bellec and Zhang(2023)]{bellec2019biasing}
Pierre~C Bellec and Cun-Hui Zhang.
\newblock Debiasing convex regularized estimators and interval estimation in linear models.
\newblock \emph{The Annals of Statistics}, 51\penalty0 (2):\penalty0 391--436, 2023.

\bibitem[Benjamini and Hochberg(1995)]{benjamini1995controlling}
Yoav Benjamini and Yosef Hochberg.
\newblock Controlling the false discovery rate: a practical and powerful approach to multiple testing.
\newblock \emph{Journal of the Royal statistical society: series B (Methodological)}, 57\penalty0 (1):\penalty0 289--300, 1995.

\bibitem[Bobkov(2016)]{bobkov2016proximity}
Sergei~Germanovich Bobkov.
\newblock Proximity of probability distributions in terms of fourier--stieltjes transforms.
\newblock \emph{Russian Mathematical Surveys}, 71\penalty0 (6):\penalty0 1021, 2016.

\bibitem[Bobkov and G{\"o}tze(2010)]{bobkov2010concentration}
SG~Bobkov and F~G{\"o}tze.
\newblock Concentration of empirical distribution functions with applications to non-iid models.
\newblock \emph{Bernoulli}, 16\penalty0 (4):\penalty0 1385--1414, 2010.

\bibitem[B{\"o}r{\"o}czky and Wintsche(2003)]{boroczky2003covering}
K{\'a}roly B{\"o}r{\"o}czky and Gergely Wintsche.
\newblock Covering the sphere by equal spherical balls.
\newblock \emph{Discrete and Computational Geometry: The Goodman-Pollack Festschrift}, pages 235--251, 2003.

\bibitem[Boucheron et~al.(2013)Boucheron, Lugosi, and Massart]{boucheron2013inequalities}
S~Boucheron, G~Lugosi, and P~Massart.
\newblock Inequalities. a nonasymptotic theory of independence, 2013.

\bibitem[Bousquet and Elisseeff(2000)]{bousquet2000algorithmic}
Olivier Bousquet and Andr{\'e} Elisseeff.
\newblock Algorithmic stability and generalization performance.
\newblock \emph{Advances in neural information processing systems}, 13, 2000.

\bibitem[B{\"u}hlmann(2013)]{buhlmann2013statistical}
Peter B{\"u}hlmann.
\newblock Statistical significance in high-dimensional linear models.
\newblock \emph{Bernoulli}, pages 1212--1242, 2013.

\bibitem[B{\"u}hlmann et~al.(2014)B{\"u}hlmann, Kalisch, and Meier]{buhlmann2014high}
Peter B{\"u}hlmann, Markus Kalisch, and Lukas Meier.
\newblock High-dimensional statistics with a view toward applications in biology.
\newblock \emph{Annual Review of Statistics and Its Application}, 1:\penalty0 255--278, 2014.

\bibitem[Cand\'es et~al.(2018)Cand\'es, Fan, Janson, and Lv]{candes2018panning}
Emmanuel Cand\'es, Yingying Fan, Lucas Janson, and Jinchi Lv.
\newblock Panning for gold: model-x knockoffs for high dimensional controlled variable selection.
\newblock \emph{Journal of the Royal Statistical Society Series B: Statistical Methodology}, 80\penalty0 (3):\penalty0 551--577, 2018.

\bibitem[Celentano et~al.(2023)Celentano, Montanari, and Wei]{celentano2020lasso}
Michael Celentano, Andrea Montanari, and Yuting Wei.
\newblock The lasso with general gaussian designs with applications to hypothesis testing.
\newblock \emph{The Annals of Statistics}, 51\penalty0 (5):\penalty0 2194--2220, 2023.

\bibitem[Chen et~al.(2020)Chen, Chi, Fan, Ma, and Yan]{chen2020noisy}
Yuxin Chen, Yuejie Chi, Jianqing Fan, Cong Ma, and Yuling Yan.
\newblock Noisy matrix completion: Understanding statistical guarantees for convex relaxation via nonconvex optimization.
\newblock \emph{SIAM journal on optimization}, 30\penalty0 (4):\penalty0 3098--3121, 2020.

\bibitem[Chernozhukov et~al.(2017)Chernozhukov, Chetverikov, Demirer, Duflo, Hansen, and Newey]{chernozhukov2017double}
Victor Chernozhukov, Denis Chetverikov, Mert Demirer, Esther Duflo, Christian Hansen, and Whitney Newey.
\newblock Double/debiased/neyman machine learning of treatment effects.
\newblock \emph{American Economic Review}, 107\penalty0 (5):\penalty0 261--265, 2017.

\bibitem[Chernozhukov et~al.(2018)Chernozhukov, Chetverikov, Demirer, Duflo, Hansen, Newey, and Robins]{chernozhukov2018double}
Victor Chernozhukov, Denis Chetverikov, Mert Demirer, Esther Duflo, Christian Hansen, Whitney Newey, and James Robins.
\newblock Double/debiased machine learning for treatment and structural parameters.
\newblock \emph{The Econometrics Journal}, 21\penalty0 (1):\penalty0 C1--C68, 2018.

\bibitem[Donoho and Montanari(2016)]{donoho2016high}
David Donoho and Andrea Montanari.
\newblock High dimensional robust m-estimation: Asymptotic variance via approximate message passing.
\newblock \emph{Probability Theory and Related Fields}, 166:\penalty0 935--969, 2016.

\bibitem[El~Karoui(2018)]{el2018impact}
Noureddine El~Karoui.
\newblock On the impact of predictor geometry on the performance on high-dimensional ridge-regularized generalized robust regression estimators.
\newblock \emph{Probability Theory and Related Fields}, 170:\penalty0 95--175, 2018.

\bibitem[El~Karoui et~al.(2013)El~Karoui, Bean, Bickel, Lim, and Yu]{elkaroui2013robust}
Noureddine El~Karoui, Daniel Bean, Peter~J Bickel, Chuang Lim, and Bin Yu.
\newblock On robust regression with high-dimensional predictors.
\newblock \emph{Proceedings of the National Academy of Sciences}, 110\penalty0 (36):\penalty0 14557--14562, 2013.

\bibitem[Fan et~al.(2020)Fan, Lv, Sharifvaghefi, and Uematsu]{fan2020ipad}
Yingying Fan, Jinchi Lv, Mahrad Sharifvaghefi, and Yoshimasa Uematsu.
\newblock Ipad: stable interpretable forecasting with knockoffs inference.
\newblock \emph{Journal of the American Statistical Association}, 115\penalty0 (532):\penalty0 1822--1834, 2020.

\bibitem[Fan(2022)]{fan2022approximate}
Zhou Fan.
\newblock Approximate message passing algorithms for rotationally invariant matrices.
\newblock \emph{The Annals of Statistics}, 50\penalty0 (1):\penalty0 197--224, 2022.

\bibitem[Guan et~al.(2025)Guan, Ren, and Apley]{guan2025one}
Charlie~K Guan, Zhimei Ren, and Daniel~W Apley.
\newblock One-at-a-time knockoffs: controlled false discovery rate with higher power.
\newblock \emph{arXiv preprint arXiv:2502.18750}, 2025.

\bibitem[Han and Shen(2023)]{han2023universality}
Qiyang Han and Yandi Shen.
\newblock Universality of regularized regression estimators in high dimensions.
\newblock \emph{The Annals of Statistics}, 51\penalty0 (4):\penalty0 1799--1823, 2023.

\bibitem[Hastie et~al.(2009)Hastie, Tibshirani, Friedman, and Friedman]{hastie2009elements}
Trevor Hastie, Robert Tibshirani, Jerome~H Friedman, and Jerome~H Friedman.
\newblock \emph{The Elements of Statistical Learning: Data Mining, Inference, and Prediction}.
\newblock Springer-Verlag, 2 edition, 2009.

\bibitem[Huang(2017)]{huang2017controlling}
Hanwen Huang.
\newblock Controlling the false discoveries in lasso.
\newblock \emph{Biometrics}, 73\penalty0 (4):\penalty0 1102--1110, 2017.

\bibitem[Javanmard and Lee(2020)]{javanmard2020flexible}
Adel Javanmard and Jason~D Lee.
\newblock A flexible framework for hypothesis testing in high dimensions.
\newblock \emph{Journal of the Royal Statistical Society Series B: Statistical Methodology}, 82\penalty0 (3):\penalty0 685--718, 2020.

\bibitem[Javanmard and Montanari(2014{\natexlab{a}})]{javanmard2014confidence}
Adel Javanmard and Andrea Montanari.
\newblock Confidence intervals and hypothesis testing for high-dimensional regression.
\newblock \emph{The Journal of Machine Learning Research}, 15\penalty0 (1):\penalty0 2869--2909, 2014{\natexlab{a}}.

\bibitem[Javanmard and Montanari(2014{\natexlab{b}})]{javanmard2014hypothesis}
Adel Javanmard and Andrea Montanari.
\newblock Hypothesis testing in high-dimensional regression under the gaussian random design model: Asymptotic theory.
\newblock \emph{IEEE Transactions on Information Theory}, 60\penalty0 (10):\penalty0 6522--6554, 2014{\natexlab{b}}.

\bibitem[Javanmard and Montanari(2018)]{javanmard2018debiasing}
Adel Javanmard and Andrea Montanari.
\newblock Debiasing the lasso: Optimal sample size for gaussian designs.
\newblock \emph{The Annals of Statistics}, 46\penalty0 (6A):\penalty0 2593--2622, 2018.

\bibitem[Ke et~al.(2024)Ke, Liu, and Ma]{ke2024power}
Zheng~Tracy Ke, Jun~S Liu, and Yucong Ma.
\newblock Power of knockoff: The impact of ranking algorithm, augmented design, and symmetric statistic.
\newblock \emph{Journal of Machine Learning Research}, 25\penalty0 (3):\penalty0 1--67, 2024.

\bibitem[Kim and Barber(2023)]{kim2023black}
Byol Kim and Rina~Foygel Barber.
\newblock Black-box tests for algorithmic stability.
\newblock \emph{Information and Inference: A Journal of the IMA}, 12\penalty0 (4):\penalty0 2690--2719, 2023.

\bibitem[Koltchinskii and Mendelson(2015)]{koltchinskii2015bounding}
Vladimir Koltchinskii and Shahar Mendelson.
\newblock Bounding the smallest singular value of a random matrix without concentration.
\newblock \emph{International Mathematics Research Notices}, 2015\penalty0 (23):\penalty0 12991--13008, 2015.

\bibitem[Lei et~al.(2018)Lei, Bickel, and El~Karoui]{lei2018asymptotics}
Lihua Lei, Peter~J Bickel, and Noureddine El~Karoui.
\newblock Asymptotics for high dimensional regression m-estimates: fixed design results.
\newblock \emph{Probability Theory and Related Fields}, 172:\penalty0 983--1079, 2018.

\bibitem[Li and Wei(2024)]{li2024non}
Gen Li and Yuting Wei.
\newblock A non-asymptotic distributional theory of approximate message passing for sparse and robust regression.
\newblock \emph{arXiv preprint arXiv:2401.03923}, 2024.

\bibitem[Li(2022)]{li2022causal}
Shuangning Li.
\newblock \emph{Causal and Selective Inference in Complex Statistical Models}.
\newblock Department of Statistics, Stanford University, 2022.

\bibitem[Li and Sur(2023)]{li2023spectrum}
Yufan Li and Pragya Sur.
\newblock Spectrum-aware adjustment: A new debiasing framework with applications to principal components regression.
\newblock \emph{arXiv preprint arXiv:2309.07810}, 2023.

\bibitem[Li et~al.(2023)Li, Fan, Sen, and Wu]{li2023random}
Yufan Li, Zhou Fan, Subhabrata Sen, and Yihong Wu.
\newblock Random linear estimation with rotationally-invariant designs: Asymptotics at high temperature.
\newblock \emph{IEEE Transactions on Information Theory}, 2023.

\bibitem[Liu(2023)]{liu2023soft}
Jingbo Liu.
\newblock From soft-minoration to information-constrained optimal transport and spiked tensor models.
\newblock In \emph{2023 IEEE International Symposium on Information Theory (ISIT)}, pages 666--671. IEEE, 2023.

\bibitem[Liu and Rigollet(2019)]{liu2019power}
Jingbo Liu and Philippe Rigollet.
\newblock Power analysis of knockoff filters for correlated designs.
\newblock \emph{Advances in Neural Information Processing Systems}, 32, 2019.

\bibitem[Liu et~al.(2020)Liu, Van~Handel, and Verd{\'u}]{liu2020second}
Jingbo Liu, Ramon Van~Handel, and Sergio Verd{\'u}.
\newblock Second-order converses via reverse hypercontractivity.
\newblock \emph{Mathematical Statistics and Learning}, 2\penalty0 (2):\penalty0 103--163, 2020.

\bibitem[Liu et~al.(2022)Liu, Katsevich, Janson, and Ramdas]{liu2022fast}
Molei Liu, Eugene Katsevich, Lucas Janson, and Aaditya Ramdas.
\newblock Fast and powerful conditional randomization testing via distillation.
\newblock \emph{Biometrika}, 109\penalty0 (2):\penalty0 277--293, 2022.

\bibitem[Ma et~al.(2018)Ma, Wang, Chi, and Chen]{ma2018implicit}
Cong Ma, Kaizheng Wang, Yuejie Chi, and Yuxin Chen.
\newblock Implicit regularization in nonconvex statistical estimation: Gradient descent converges linearly for phase retrieval and matrix completion.
\newblock In \emph{International Conference on Machine Learning}, pages 3345--3354. PMLR, 2018.

\bibitem[Miolane and Montanari(2021)]{miolane2021distribution}
L{\'e}o Miolane and Andrea Montanari.
\newblock The distribution of the lasso: Uniform control over sparse balls and adaptive parameter tuning.
\newblock \emph{The Annals of Statistics}, 49\penalty0 (4):\penalty0 2313--2335, 2021.

\bibitem[Montanari and Saeed(2022)]{montanari2022universality}
Andrea Montanari and Basil~N Saeed.
\newblock Universality of empirical risk minimization.
\newblock In \emph{Conference on Learning Theory}, pages 4310--4312. PMLR, 2022.

\bibitem[Schniter et~al.(2016)Schniter, Rangan, and Fletcher]{schniter2016vector}
Philip Schniter, Sundeep Rangan, and Alyson~K Fletcher.
\newblock Vector approximate message passing for the generalized linear model.
\newblock In \emph{2016 50th Asilomar conference on signals, systems and computers}, pages 1525--1529, 2016.

\bibitem[Tansey et~al.(2022)Tansey, Veitch, Zhang, Rabadan, and Blei]{tansey2022holdout}
Wesley Tansey, Victor Veitch, Haoran Zhang, Raul Rabadan, and David~M Blei.
\newblock The holdout randomization test for feature selection in black box models.
\newblock \emph{Journal of Computational and Graphical Statistics}, 31\penalty0 (1):\penalty0 151--162, 2022.

\bibitem[Thrampoulidis et~al.(2015)Thrampoulidis, Oymak, and Hassibi]{thrampoulidis2015regularized}
Christos Thrampoulidis, Samet Oymak, and Babak Hassibi.
\newblock Regularized linear regression: A precise analysis of the estimation error.
\newblock In \emph{Conference on Learning Theory}, pages 1683--1709. PMLR, 2015.

\bibitem[van~de Geer et~al.(2014)van~de Geer, B{\"u}hlmann, Ritov, and Dezeure]{van2014asymptotically}
Sara van~de Geer, Peter B{\"u}hlmann, Yaacov Ritov, and Ruben Dezeure.
\newblock On asymptotically optimal confidence regions and tests for high-dimensional models.
\newblock \emph{The Annals of Statistics}, 42\penalty0 (3):\penalty0 1166--1202, 2014.

\bibitem[Van~Handel(2014)]{van2014probability}
Ramon Van~Handel.
\newblock Probability in high dimension.
\newblock \emph{Lecture Notes (Princeton University)}, 2014.

\bibitem[Venkataramanan et~al.(2022)Venkataramanan, K{\"o}gler, and Mondelli]{venkataramanan2022estimation}
Ramji Venkataramanan, Kevin K{\"o}gler, and Marco Mondelli.
\newblock Estimation in rotationally invariant generalized linear models via approximate message passing.
\newblock In \emph{International Conference on Machine Learning}, pages 22120--22144. PMLR, 2022.

\bibitem[Vershynin(2009)]{vershynin2009high}
Roman Vershynin.
\newblock High-dimensional probability, 2009.

\bibitem[Vershynin(2018)]{vershynin2018high}
Roman Vershynin.
\newblock \emph{High-dimensional probability: An introduction with applications in data science}, volume~47.
\newblock Cambridge university press, 2018.

\bibitem[Weinstein et~al.(2017)Weinstein, Barber, and Candes]{weinstein2017power}
Asaf Weinstein, Rina Barber, and Emmanuel Candes.
\newblock A power and prediction analysis for knockoffs with lasso statistics.
\newblock \emph{arXiv preprint arXiv:1712.06465}, 2017.

\bibitem[Xing et~al.(2023)Xing, Zhao, and Liu]{xing2023controlling}
Xin Xing, Zhigen Zhao, and Jun~S Liu.
\newblock Controlling false discovery rate using gaussian mirrors.
\newblock \emph{Journal of the American Statistical Association}, 118\penalty0 (541):\penalty0 222--241, 2023.

\bibitem[Zhang and Zhang(2014)]{zhang2014confidence}
Cun-Hui Zhang and Stephanie~S Zhang.
\newblock Confidence intervals for low dimensional parameters in high dimensional linear models.
\newblock \emph{Journal of the Royal Statistical Society Series B: Statistical Methodology}, 76\penalty0 (1):\penalty0 217--242, 2014.

\bibitem[Zhong et~al.(2021)Zhong, Wang, and Fan]{zhong2021approximate}
Xinyi Zhong, Tianhao Wang, and Zhou Fan.
\newblock Approximate message passing for orthogonally invariant ensembles: Multivariate non-linearities and spectral initialization.
\newblock \emph{arXiv preprint arXiv:2110.02318}, 2021.

\bibitem[Zrnic and Jordan(2023)]{zrnic2023post}
Tijana Zrnic and Michael~I Jordan.
\newblock Post-selection inference via algorithmic stability.
\newblock \emph{The Annals of Statistics}, 51\penalty0 (4):\penalty0 1666--1691, 2023.

\end{thebibliography}

\end{document}